\setlist[enumerate]{leftmargin=1.2em}
\setlist[itemize]{leftmargin=1.2em}
\definecolor{green}{rgb}{0,0.8,0} 
\newcommand{\Red}[1]{\begingroup\color{red} #1\endgroup} 
\newcommand{\Blue}[1]{\begingroup\color{blue} #1\endgroup} 
\newtheorem{theorem}{Theorem}[section]
\newtheorem{lemma}[theorem]{Lemma}
\newtheorem{proposition}[theorem]{Proposition}
\theoremstyle{definition}
\theoremstyle{remark}
\newtheorem{remark}[theorem]{Remark}
\numberwithin{equation}{section}
\newcommand{\nrm}[1]{\Vert#1\Vert}
\newcommand{\brk}[1]{\langle#1\rangle}
\newcommand{\nnrm}[1]{{\vert\kern-0.25ex\vert\kern-0.25ex\vert #1 
		\vert\kern-0.25ex\vert\kern-0.25ex\vert}}
\newcommand{\dist}{\mathrm{dist}\,}
\newcommand{\supp}{{\mathrm{supp}}\,}
\renewcommand{\Re}{\mathrm{Re}}
\newcommand{\lap}{\Delta}
\newcommand{\rd}{\partial}
\newcommand{\nb}{\nabla}
\newcommand{\alp}{\alpha}
\newcommand{\bt}{\beta}
\newcommand{\dlt}{\delta}
\newcommand{\varep}{\varepsilon}
\newcommand{\lmb}{\lambda}
\newcommand{\tht}{\theta}
\newcommand{\Tht}{\Theta}
\newcommand{\omg}{\omega}
\newcommand{\Omg}{\Omega}
\newcommand{\zt}{\zeta}
\newcommand{\bbR}{\mathbb R}
\newcommand{\bbZ}{\mathbb Z}
\newcommand{\calA}{\mathcal A}
\newcommand{\calE}{\mathcal E}
\newcommand{\calL}{\mathcal L}
\newcommand{\calM}{\mathcal M}
\newcommand{\calN}{\mathcal N}
\newcommand{\calO}{\mathcal O}
\begin{document}
\bibliographystyle{plain}
 \title{Stability and instability of Kelvin waves} 
\author{Kyudong Choi\thanks{Department of Mathematical Sciences, Ulsan National Institute of Science and Technology, 50 UNIST-gil, Eonyang-eup, Ulju-gun, Ulsan, Republic of Korea. Email: kchoi@unist.ac.kr.} 
	\and In-Jee Jeong\thanks{Department of Mathematical Sciences and RIM, Seoul National University, 1 Gwanak-ro, Gwanak-gu, Seoul 08826, Republic of Korea. Email: injee\_j@snu.ac.kr.}
}
\date\today
 
  \maketitle 
 
\begin{abstract} 
	The $m$-waves of Kelvin are uniformly rotating patch solutions of the 2D Euler equations with $m$-fold rotational symmetry for $m\ge2$. For Kelvin waves sufficiently close to the disc, we prove a nonlinear stability result up to an arbitrarily long time in the $L^1$ norm of the vorticity, for $m$-fold symmetric perturbations. {To obtain this result, we first prove that} the Kelvin wave is a strict local maximizer of the energy functional in some admissible class of patches, {which had been claimed by Wan in 1986. This gives an orbital stability result with a support condition on the evolution of perturbations, but using a Lagrangian bootstrap argument which traces the particle trajectories of the perturbation, we are able to drop the condition on the evolution. Based on this unconditional stability result,} we establish that long time filamentation, or formation of long arms, occurs near  the Kelvin waves, which have been observed in various numerical simulations. Additionally, we discuss stability of annular patches in the same variational framework.
\end{abstract}

\tableofcontents

\section{Introduction} 
{A few coherent vortices have been found in the two dimensional Euler equations, such as (rotating) disks, (sliding) dipoles, etc. The study of their stability (and instability) has been a classical topic in fluid dynamics it is believed to be relevant for long time behavior of high Reynolds number flows. Among them, we revisit} the  $m$-waves of Kelvin, which are uniformly rotating patch solutions of the two-dimensional incompressible Euler equations on $\bbR^2$ {in the vorticity form:}
\begin{equation}\label{eq:Euler}
	\left\{ 
	\begin{aligned}
		\rd_t\omg+u\cdot\nb\omg = 0, &	\\
		u = -\nb^\perp (-\lap)^{-1}\omg. & 
	\end{aligned}
	\right. 
\end{equation} Here, $\omg(t,\cdot) :\bbR^2\to \bbR$ and $u(t,\cdot) : \bbR^2\to\bbR^2$ denote the vorticity and velocity of the fluid at time $t$, respectively.  {For any integer $m\ge2$ and real $r_0>0$,}  the Kelvin waves can be parametrized by $\bt>0$ (see \cite{Burbea1982}); for a sufficiently small $\bt$, we shall write \begin{equation}\label{defn_v_sta}
	\begin{split}
		\omg^{m,\bt} = \mathbf{1}_{A^{m,\bt}}, \quad A^{m,\bt} = \left\{ (r,\tht): r  {<}
		 r_{0} + g^{m,\bt}(\tht) \right\}
	\end{split}
\end{equation} as the {$m$-wave of Kelvin with parameter $\bt$}, characterized by the property that  \begin{equation*}\label{eq:boundary-asym}
	\begin{split}
		g^{m,\bt}(\tht) = \bt\cos(m\tht) +  {o(\bt)} ,
	\end{split}
\end{equation*} where the $ {o(\bt)}$--term consists of expressions $\cos(km\tht)$ with $k>1$. {It turns out that for $\bt$ small, the function $g^{m,\bt}$ can be chosen uniquely in a way that \begin{equation}\label{act_v_state}
	\omega^{m,\beta}_{\Omega^{m,\beta}t}(x),\quad\,t\geq 0, \,x\in\mathbb{R}^2 
\end{equation} defines a solution of the Euler equations \eqref{eq:Euler} for some $\Omega^{m,\beta}\in\mathbb{R}$, which is the angular velocity of the Kelvin wave $\omg^{m,\bt}$. Here, we are using the notation $\omega_\alpha :=\omega (R_{-\alp}x)$ where $R_{\alp}$ is the counterclockwise rotation matrix by angle $\alp$ with respect to the origin.} In the rest of the introduction, we fix the reference length $r_0$ to be $1$ for simplicity, and take $B$ to be the open ball centered at the origin with radius $1$. 

Kirchhoff has discovered that ellipses define uniformly rotating patch solutions for any aspect ratio \cite{Kirchhoff}, which correspond to the case $m = 2$ in the above. For $m\ge3$, the existence of $m$-fold symmetric rotating patches bifurcating from the disc was first hinted by Kelvin in 1880  {(see Lamb \cite[231p]{Lamb}),} who computed that an infinitesimal perturbation of the disc with period $m$ rotates with the angular speed $1/2- 1/2m$. Then, an argument of existence was given by Burbea \cite{Burbea1982}  {in 1982} (who coined the term ``$m$-waves of Kelvin'') and then rigorously by  {Hmidi, Mateu, and Verdera in} \cite{HMV}  {only in 2013. We also refer to} 
the work \cite{HMW} of {Hassainia, Masmoudi, and Wheeler}
 where the authors study the behavior of the whole branch of solutions. Thanks to these works, we know that the boundary of these rotating patches are $C^\infty$--smooth and even real analytic.

In this paper, we focus on  nonlinear stability and instability of the Kelvin waves close to the disc, and  obtain the following results:\\



{\textit{Long time stability} (Theorem \ref{cor:finite_time}). {For any $T>0$, sufficiently localized  {$m$-fold symmetric} perturbations of the Kelvin wave stay close to the rotating Kelvin wave \eqref{act_v_state} in the time interval $[0,T]$.} \\
	
\textit{Instability: large perimeter growth} (Theorem \ref{thm:instability}). For any $M>0$, there exists an $L^1$-small patch perturbation of the Kelvin wave whose perimeter grows to become larger than $M$ in finite time.} \\


{The precise statements will be given in \S\ref{subsec:main}, but see Figure \ref{fig:fila} which illustrates both stable and {unstable} behavior of a 3-fold rotating state. While the ``bulk'' of the patch seems to converge to a Kelvin wave, long arms are constantly growing in time. This type of \textit{filamentation} instability can be generically observed for vortex patches \cite{Carrillo2000,Kamm,HZTM,DeZa,ZHR,Drit}.
}

\begin{figure}
	\centering
	\includegraphics[scale=0.23]{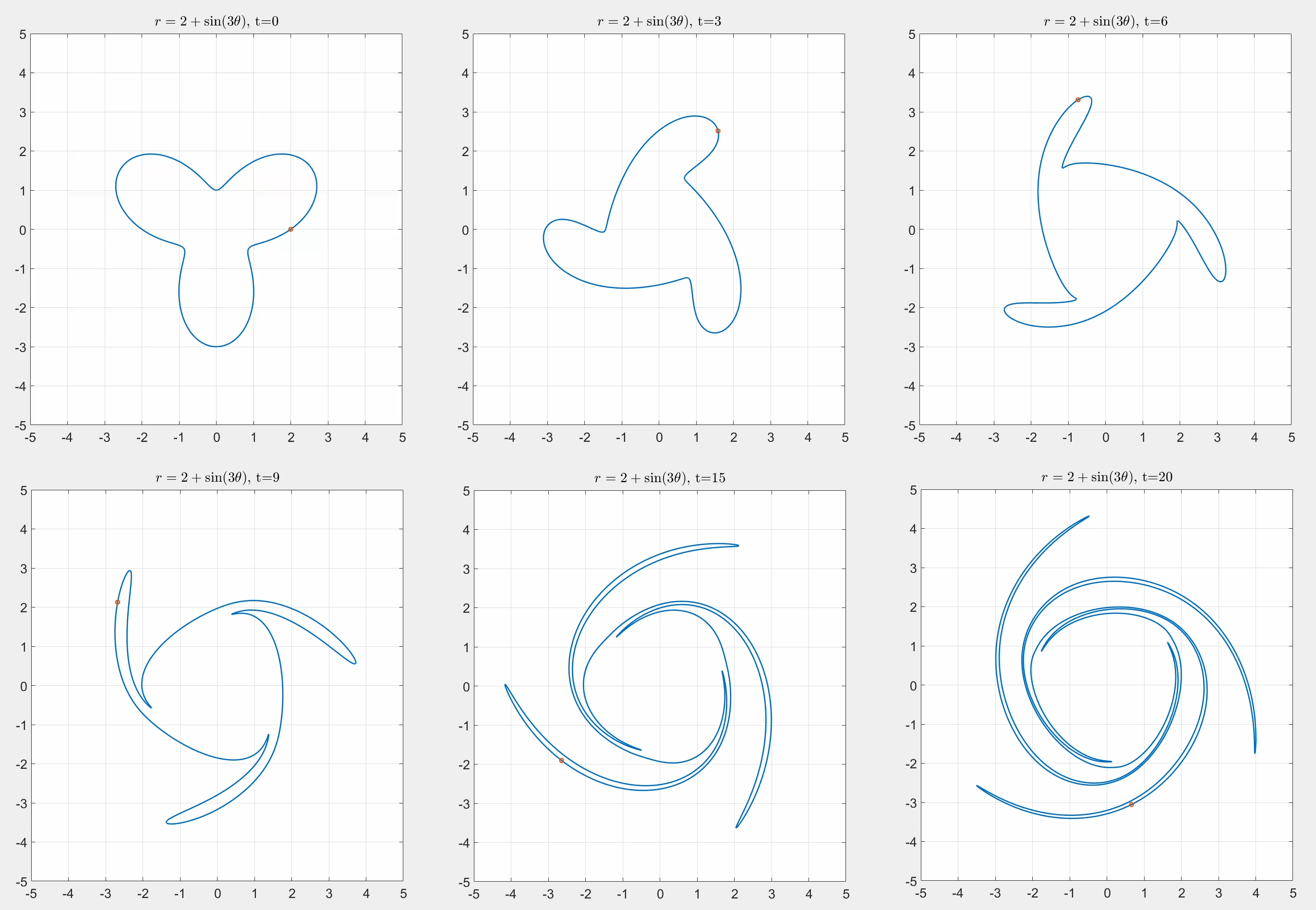}   
	\caption{Evolution of the patch initially defined by the region $\left\{ (r,\tht) : r < 2 + \sin(3\tht) \right\}$ at time moments $t = 0, 3, 6, 9,  15, 20$. Courtesy of Junho Choi.} \label{fig:fila}
\end{figure}

\subsection{Main results} \label{subsec:main}

Let us recall that  {$\omega^{m,\beta}_{\Omega^{m,\beta}t}(x)$ in \eqref{act_v_state} is the  rotating} solution of Euler with $\omg^{m,\bt}$ as the initial data. Our first result shows that localized and $L^1$ small perturbations of the Kelvin wave  {(for sufficiently small $\beta$)} stay $L^1$ close for an arbitrarily long time.  {Roughly speaking, the core part of the perturbed patch solutions is equal to that of  the   rotating Kelvin wave \eqref{act_v_state}.}

\begin{theorem}[Stability] \label{cor:finite_time}   
	Let $m\geq 2$ be an integer.  There are $\beta_1>0$ and 
	 {$r'>1$}
	 such that for any $\beta\in(0,\beta_1]$, $\varepsilon>0$, and $T>0$, there exists $\delta=\dlt(m,\bt,\varep,T)>0$ such that 
	if $\omega_0= \mathbf{1}_{A_0}$ for some $m$-fold symmetric open set $A_0\subset\mathbb{R}^2$ satisfying 
	\begin{equation}\label{ass_st_cor}
		\|\omega_0-\omega^{m,\beta}\|_{L^1(\mathbb{R}^2)}\leq\delta \qquad \mbox{and} \qquad 
	 {A_0\subset B_{r'}	}
	\end{equation} 
	then the solution $\omg(t)$ to \eqref{eq:Euler} with initial data $\omg_0$ satisfies 
	\begin{equation}\label{est_cor}
		\sup_{t\in[0,T]}\|\omega(t)-\omega_{\Omega^{m,\beta}t}^{m,\beta}\|_{L^1(\mathbb{R}^2)}\leq\varepsilon. 
	\end{equation}
	 {Here, $B_r,\,r>0$ is the  open ball centered at the origin with radius $r$, and   a set $A\subset \mathbb{R}^2$ is $m$-fold symmetric if $R_{2\pi/m}[A]=A$.}
\end{theorem}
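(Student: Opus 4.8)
The plan is to run a variational (energy--impulse) stability argument in the frame co-rotating with the wave, and then to remove the confinement hypothesis that argument needs by a Lagrangian bootstrap on particle trajectories. Along the Euler flow \eqref{eq:Euler} the kinetic energy $E[\omg]=\tfrac12\int\omg(-\lap)^{-1}\omg\,\ud x$, the angular impulse $J[\omg]=\int_{\bbR^2}|x|^2\omg\,\ud x$, and every Casimir (in particular the patch area $|A(t)|$) are conserved, hence so is the augmented functional $\calE[\omg]:=E[\omg]-\tfrac{\Omg^{m,\bt}}{2}J[\omg]$. In the rotating frame $\omg^{m,\bt}$ is a steady state and a critical point of $\calE$ on the class $\calA$ of $m$-fold symmetric patches of the prescribed area contained in $B_{r'}$, and the structural input---Wan's claim, established beforehand---is that for $\bt$ small it is a \emph{strict} local maximizer: there is a modulus $\rho$, vanishing only at $0$, with $\calE[\omg^{m,\bt}]-\calE[\omg]\geq\rho\big(\mathrm{dist}_{L^1}(\omg,\mathcal O)\big)$ for $\omg\in\calA$ near the rotation orbit $\mathcal O=\{\omg^{m,\bt}_\alp:\alp\in\bbR\}$. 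Granting that $\omg(t)$ stays in $\calA$, orbital stability is then soft: initial $L^1$-closeness makes $|\calE[\omg(t)]-\calE[\omg^{m,\bt}]|=O(\dlt)$ for all $t$ by conservation, and as long as $\omg(t)$ remains where strict maximality holds the coercivity estimate keeps $\mathrm{dist}_{L^1}(\omg(t),\mathcal O)$ small, which reciprocally keeps $\omg(t)$ in that neighborhood.

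Translating back to the original frame gives closeness of $\omg(t)$ to the moving orbit $\{\omg^{m,\bt}_{\Omg^{m,\bt}t+\alp}\}$; to pin it to the specific profile $\omg^{m,\bt}_{\Omg^{m,\bt}t}$ I would control the phase $\alp(t)$ of the nearest orbit point. Since an exact orbit point rotates at the constant angular speed $\Omg^{m,\bt}$, the drift of $\alp$ is controlled by $\mathrm{dist}_{L^1}(\omg(t),\mathcal O)$, so the accumulated phase error over $[0,T]$ is $O\big(T\rho^{-1}(\dlt)\big)$ and can be absorbed into $\varep$ by taking $\dlt=\dlt(m,\bt,\varep,T)$ small---this is precisely where the dependence of $\dlt$ on $T$ enters. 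The $m$-fold symmetry, preserved by the flow, keeps $\alp$ unambiguous modulo $2\pi/m$ and prevents jumps, so an initial phase $\approx0$ propagates to $\alp(t)\approx\Omg^{m,\bt}t$.

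The genuine difficulty is that filamentation drives $\supp\omg(t)$ out of $B_{r'}$, so $\omg(t)\notin\calA$ and the argument above cannot be applied directly; note that bare conservation of $J$ only bounds the mass outside $B_{r'}$ by $O(1)$, not by anything small. I would therefore bootstrap on the flow map $\Phi_t$, $\tfrac{\ud}{\ud t}\Phi_t(x)=u(t,\Phi_t(x))$, $\Phi_0=\mathrm{id}$, $A(t)=\Phi_t(A_0)$, using two facts: (i) $\Phi_t$ is measure preserving, and (ii) the Biot--Savart velocity obeys $\|u(t)-u^\star(t)\|_{L^\infty}\aleq\|\omg(t)-\omg^{m,\bt}_{\Omg^{m,\bt}t}\|_{L^1}^{1/2}$, where $u^\star(t)$ is the velocity of the rotating Kelvin wave, so under the bootstrap hypothesis that this $L^1$ distance is $\leq\varep_*$ the perturbed velocity is uniformly close to the explicit Kelvin velocity. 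Splitting $A_0=(A_0\cap A^{m,\bt})\sqcup(A_0\setminus A^{m,\bt})$ with $|A_0\setminus A^{m,\bt}|\leq\dlt$, a log-Lipschitz (Yudovich) trajectory comparison shows that the core particles stay within $o(1)$ of the Kelvin-wave trajectories---which keep the rotating patch $A^{m,\bt}_{\Omg^{m,\bt}t}\subset B_1$ invariant---hence remain in $B_{r'}$ throughout $[0,T]$; consequently $A(t)\setminus B_{r'}\subseteq\Phi_t(A_0\setminus A^{m,\bt})$, which by (i) has measure $\leq\dlt$. The truncation $\omg(t)\mathbf 1_{B_{r'}}$ then lies in $\calA$ up to an $O(\dlt)$ area defect, and---since the uniform bound $\|u\|_{L^\infty}\leq C$ keeps all of $A(t)$ inside $B_{1+CT}$, so the discarded mass sits at bounded radius---its energy and impulse differ from those of $\omg(t)$, hence from those of $\omg^{m,\bt}$, by only $O(\dlt\,C(T))$. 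Feeding this near-maximizer into the coercivity estimate controls $\mathrm{dist}_{L^1}(\omg(t)\mathbf 1_{B_{r'}},\mathcal O)$, and adding back the $\leq\dlt$ of exterior mass gives $\|\omg(t)-\omg^{m,\bt}_{\Omg^{m,\bt}t}\|_{L^1}\leq\rho^{-1}\!\big(O(\dlt\,C(T))\big)+\dlt\ll\varep_*$ for $\dlt$ small; continuity of $t\mapsto\|\omg(t)-\omg^{m,\bt}_{\Omg^{m,\bt}t}\|_{L^1}$ then closes the bootstrap.

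I expect the crux to be this Lagrangian step: reconciling the Eulerian variational estimate, which is intrinsically posed on patches confined to $B_{r'}$, with the true dynamics in which a small but positive amount of mass escapes to form arms. The two delicate quantitative points are to guarantee that the escaping mass originates only from the $O(\dlt)$ excess and never from the bulk---which requires the velocity comparison with the Kelvin wave to be stable under the small leaked mass---and to make the trapping of the core inside $B_{r'}$ robust over all of $[0,T]$ with constants still usable after $\dlt$, and hence $\varep_*$, are sent to $0$. Once these are made quantitative the bootstrap closes and \eqref{est_cor} follows; as usual one first proves the conclusion for $\varep\leq\varep_*(m,\bt,r',T)$ and obtains the general case for free.
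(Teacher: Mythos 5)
Your overall architecture matches the paper's: conditional variational stability (the strict local maximality of the energy, Propositions \ref{thm:stable} and \ref{prop:key}, which you may legitimately take as input), a phase-drift estimate whose accumulated error over $[0,T]$ is absorbed by choosing $\delta=\delta(T)$, and a Lagrangian trajectory bootstrap to remove the confinement hypothesis. The genuine gap is in the phase-control step. You assert that ``the drift of $\alpha$ is controlled by $\mathrm{dist}_{L^1}(\omega(t),\mathcal{O})$'' because an exact orbit point rotates at speed $\Omega^{m,\beta}$, but you give no mechanism: the nearest-orbit phase $\alpha(t)$ is a priori neither unique, nor continuous, nor differentiable, so ``drift'' has no meaning yet, and $m$-fold symmetry by itself does not ``prevent jumps.'' This step is precisely the content of Lemma \ref{lem:no_high_jump} and Proposition \ref{thm:refined}, which occupy most of Section \ref{sec:fila}: the phase is made quantitative through the moment integral $I(t)=\int e^{im\theta}\omega(t,x)\,dx$, whose value on the Kelvin orbit is $\bar{I}e^{im\alpha}$ with $|\bar I|\simeq \beta$ (Lemma \ref{lem:v_st}), so that phase differences are read off from $L^1$ differences; jumps over short times are excluded using the uniform velocity bound; and the drift estimate is then proved by a separate bootstrap comparing the perturbed flow map with the Kelvin flow map. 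The rate that actually comes out is $C_0\varepsilon^{1/2}$ per time interval of length $c_0\beta$ (i.e. $\sim\varepsilon^{1/2}/\beta$ per unit time), not $O(\mathrm{dist})$ as you claim. Without some such device your phase estimate is an assertion, and your bootstrap cannot close, since its hypothesis is closeness to the \emph{specific} rotation $\omega^{m,\beta}_{\Omega^{m,\beta}t}$ while coercivity only returns closeness to the orbit.

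A second, smaller deviation concerns confinement. You trap only the core $A_0\cap A^{m,\beta}$ and allow up to $\delta$ of mass to escape $B_{r'}$, which then forces the truncation $\omega(t)\mathbf{1}_{B_{r'}}$ and an appeal to coercivity for a patch satisfying the admissible-class constraints only up to $O(\delta\, C(T))$ defects --- a robustness statement you do not prove (the constraints in $\calM_m$ are exact; restoring them requires the rescaling/reparametrization/rotation adjustment carried out in the paper's proof of Proposition \ref{thm:stable}). The paper avoids this entirely: $r'$ is chosen so that $B_{r'}$ lies inside a $\mu$-neighborhood $\bar{A}^\mu$ of the Kelvin set, and Lemma \ref{lem:eta} shows that Kelvin trajectories started anywhere in $\bar{A}^\mu$ --- not just inside $A^{m,\beta}$ --- remain in a fixed ball compactly contained in $B_{\bar r}$; the Gronwall comparison then traps the \emph{entire} perturbed patch, no mass escapes, and conditional stability applies verbatim. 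Your route could probably be completed, but it inserts an extra unproven step where a cleaner argument is available.
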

\begin{remark}
  The \textit{initial} support condition (the second condition in \eqref{ass_st_cor})
means that the perturbations should be localized near the Kelvin set $A^{m,\beta}$. Indeed, due to the property $r'>1$, if $\beta>0$ is sufficiently small, then $A^{m,\beta}\subset\subset B_{r'}.$  
Thus, the   condition  
 holds once we simply assume  
that for some small $\mu=\mu(r')>0$, $${A_0\subset \{x\in\mathbb{R}^2\,:\, \mbox{dist}(x, A^{m,\beta})<\mu\}}.	 $$
 (\textit{i.e.} for any $x \in A_0$, there exists a point $x' \in A^{m,\bt}$ such that $|x-x'|<\mu$.)
  The  important point is that  $r'>1$ (so $\mu>0$) is a constant depending only on $m\ge2$ and in particular does not need to become smaller as we vary $\varep$ and $T$. In that sense, the  assumption  is not too much restrictive. However, we note that the condition  seems inevitable for stability as long as we use a variational idea saying that each Kelvin wave is a local maximizer of the kinetic energy in some class. {A counterexample for the case $m=2$ is given in \cite{Tang} (case of the ellipse), and it can be modified to be a counterexample for any $m\ge3$.\\}
\end{remark}
 
When proving the above long time stability, we use three main ingredients. The first one is
the following \textit{conditional orbital stability}, which  {appeared already in Wan's paper \cite[Section 5, Theorem 7]{Wan86} in 1986} with only a very rough sketch of the proof.
{The key observation is that Kelvin waves become non-degenerate local energy maximums when perturbations are assumed to have the same ($m$-fold) symmetry. This idea was already used for Kirchhoff's ellipse  by Tang \cite{Tang} in 1987.}
 We would like to point out that by the time of \cite{Wan86}, even the existence of Kelvin waves have not been rigorously established; the first rigorous existence proof came out only 27 years later in \cite{HMV}. 
We believe that at the time of \cite{Wan86}, a complete proof was impossible {since the proof of stability requires not only existence but also \textit{smoothness} of the boundary of Kelvin waves. In this paper, we provide a detailed and complete proof based on the method suggested in \cite{Wan86}, using rigorous results from \cite{HMV}.}
\begin{proposition}[Conditional Orbital Stability] \label{thm:stable} For each integer $m\geq 2 $, there exist constants $\bar r>1$ and $\bar\beta>0$ satisfying the following property:\\
	
Fix any $0<\bt<\bar{\bt}$. Then, for any $\varepsilon>0$, there exists $\delta>0$ such that if $\omega_0= \mathbf{1}_{A_0}$ where 
\begin{equation*}\label{cond_init}
A_0\mbox{ is } m\mbox{-fold symmetric}, \quad A_0\subset B_{\bar r},\quad\mbox{and}\quad \|\omega_0-\omega^{m,\beta}\|_{L^1(\mathbb{R}^2)}\leq \delta,
\end{equation*} 
then the solution $\omega(t)= \mathbf{1}_{A(t)}$ of \eqref{eq:Euler} with initial data $\omg_0$ is stable up to rotations as long as it is contained in $B_{\bar r}$; more precisely, if for some $0<T\leq\infty$ we have 
\begin{equation}\label{cond_evol}
  \bigcup_{t\in[0,T)} A(t) \subset B_{\bar r} ,
\end{equation} 
then there exists a function $\Theta(\cdot):[0,T)\to  \mathbb{R}$ satisfying
\begin{equation}\label{property_orb}
\sup_{t\in[0,T)}\|\omega(t)-\omega_{\Theta(t)}^{m,\beta}\|_{L^1(\mathbb{R}^2)}\leq \varepsilon. 
\end{equation}
\end{proposition}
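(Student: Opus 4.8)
The plan is to run a variational (Lyapunov) argument in the co-rotating frame, realizing the Kelvin wave as a quantitatively non-degenerate constrained local maximizer of the energy. Work with the conserved functional
\[
\mathcal{F}[\omega] := E[\omega] - \Omega^{m,\beta} I[\omega], \qquad E[\omega] = \tfrac12\int \omega\,(-\Delta)^{-1}\omega, \qquad I[\omega] = \tfrac12\int |x|^2\,\omega\,dx;
\]
since $E$ and $I$ are each conserved by \eqref{eq:Euler} (the impulse by rotational invariance), so is $\mathcal{F}$, and on patches supported in $B_{\bar r}$ the map $\omega\mapsto\mathcal{F}[\omega]$ is $L^1$-continuous (trivially for $I$, and for $E$ because the logarithmic kernel is locally integrable and the supports are uniformly bounded). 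Along the flow $\omega(t)=\mathbf{1}_{A(t)}$ stays an $m$-fold symmetric patch with $|A(t)|=|A_0|$ (transport by a measure-preserving, rotationally equivariant flow), $t\mapsto\omega(t)$ is continuous into $L^1$ by Yudovich theory, and $d(\omega):=\inf_{\alpha}\|\omega-\omega^{m,\beta}_\alpha\|_{L^1}$ is attained, continuous in $t$, and admits a measurable optimal angle $\Theta(t)$.

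The core is the quantitative inequality, on the admissible class $\mathcal{K}$ of $m$-fold symmetric patches $\mathbf{1}_A$ with $A\subset B_{\bar r}$ and $|A|=|A^{m,\beta}|$: there are $\rho_0>0$ and a strictly increasing $\Phi$ with $\Phi(0)=0$, $\Phi>0$ on $(0,\rho_0]$, such that
\[
\mathcal{F}[\omega^{m,\beta}] - \mathcal{F}[\omega] \ \ge\ \Phi\big(d(\omega)\big) \qquad \text{for all } \omega\in\mathcal{K} \text{ with } d(\omega)\le\rho_0 .
\]
Granting this, the proposition follows by a first-exit argument. Fix $\varepsilon\in(0,\rho_0)$ and choose $\delta$ small enough that $d(\omega_0)\le\delta<\varepsilon$ and that the initial deficit $\mathcal{F}[\omega^{m,\beta}]-\mathcal{F}[\omega_0]$ (controlled by $\delta$ through $L^1$-continuity of $\mathcal{F}$) is $<\Phi(\varepsilon)$. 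If $d(\omega(t))$ first reached $\varepsilon$ at some $t_\ast<T$, then $\omega(t_\ast)\in\mathcal{K}$ — this is exactly where hypothesis \eqref{cond_evol} is used — and the inequality would force $\mathcal{F}[\omega^{m,\beta}]-\mathcal{F}[\omega(t_\ast)]\ge\Phi(\varepsilon)$; but the left side equals the conserved initial deficit $<\Phi(\varepsilon)$, a contradiction. Hence $d(\omega(t))<\varepsilon$ on $[0,T)$, which is \eqref{property_orb} with $\Theta$ the selected optimal angle.

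To prove the inequality I would expand $\mathcal{F}$ about $\omega^{m,\beta}$. With $\sigma=\omega-\omega^{m,\beta}$, $\Psi^{m,\beta}=(-\Delta)^{-1}\omega^{m,\beta}-\tfrac{\Omega^{m,\beta}}{2}|x|^2$ the co-rotating stream function, and using $\int\sigma=0$ to insert the boundary value $\mu$ of $\Psi^{m,\beta}$ on $\partial A^{m,\beta}$,
\[
\mathcal{F}[\omega^{m,\beta}] - \mathcal{F}[\omega] = -\int (\Psi^{m,\beta}-\mu)\,\sigma\,dx \ -\ \tfrac12\int\sigma\,(-\Delta)^{-1}\sigma\,dx \ =:\ L - Q .
\]
The rigorous input from \cite{HMV} is that for small $\beta$ the boundary of $A^{m,\beta}$ is smooth, $A^{m,\beta}=\{\Psi^{m,\beta}>\mu\}$ exactly, $\mu$ is a regular value, and $|\nabla\Psi^{m,\beta}|\ge c_0>0$ on $\partial A^{m,\beta}$. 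These give $L=\int_{A\triangle A^{m,\beta}}|\Psi^{m,\beta}-\mu|\,dx\ge0$ with $|\Psi^{m,\beta}-\mu|\gtrsim\min(\mathrm{dist}(\cdot,\partial A^{m,\beta}),1)$, so $L$ controls the part of the symmetric difference lying away from the boundary at an order-one rate, while near the boundary $L\gtrsim\int w^2\,d\theta$ for the normal displacement $w(\theta)$. The obstruction is $Q=\tfrac12\|\nabla(-\Delta)^{-1}\sigma\|_{L^2}^2\ge0$.

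The hard part is precisely the comparison $L\gtrsim Q$, which is logarithmically critical: the crude bound $Q\lesssim\|\sigma\|_{L^1}^2\log(1/\|\sigma\|_{L^1})$ falls just short of the near-boundary bound $L\gtrsim\|w\|_{L^2}^2$. I would split the symmetric difference into a far part (distance $\ge h_0$), where $L\gtrsim h_0\|\sigma_{\mathrm{far}}\|_{L^1}$ dominates $Q$ once $d(\omega)$ is small, and a near part, reduced to a boundary quadratic form in $w$. There the single-layer structure upgrades the estimate to $Q\lesssim\|w\|_{H^{-1/2}}^2$ with no logarithmic loss (the log kernel has symbol $\sim|k|^{-1}$ on the boundary circle), while $L\gtrsim\|w\|_{L^2}^2$; since $H^{-1/2}$ is weaker than $L^2$ the comparison becomes a mode-by-mode inequality $a-b|k|^{-1}>0$. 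Here the $m$-fold symmetry is essential: it restricts $w$ to Fourier modes that are multiples of $m$, excluding the low modes where the inequality fails, and the rotation zero-mode (the $\sin(m\theta)$ direction tangent to the orbit) is exactly what is removed by passing to $d(\omega)$, yielding $\Phi(s)\gtrsim s^2$. The two points I expect to fight are (i) legitimizing the near-boundary graph reduction for possibly non-graphical competitors, controlling folds and bubbles via the area constraint and $L^1$-smallness, and (ii) verifying the strict sign of the surviving mode $k=m$ for the genuine wave $\beta>0$ rather than the disc, for which I would use that the Hmidi--Mateu--Verdera branch is a non-degenerate smooth curve through the disc and perturb Kelvin's explicit computation $\Omega\sim\tfrac12-\tfrac1{2m}$.
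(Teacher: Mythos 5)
Your high-level architecture (a conserved Lyapunov functional, a quantitative local-maximality inequality over the rotation orbit, and a first-exit continuity argument) is the same as the paper's, and your far/near splitting plus the single-layer spectral bound mirrors the paper's reduction to graph perturbations (Lemmas \ref{lem:reduction-ImFT}, \ref{lem:energy-compare1}) and its spectral analysis (Proposition \ref{prop:key-graph}). The genuine gap is in how the impulse enters. The paper, following Tang, imposes $\int r^2(\omg-\omg^{m,\bt})\,dx=0$ as a \emph{hard constraint} in the admissible class $\calM_m$; this is precisely what produces \eqref{eq:qm-cos}, killing the $\cos(m\tht)$ boundary mode at linear order, so that after the $\sin(m\tht)$ mode is removed by the moment condition and the modes $0<|n|<m$ by symmetry, only $|n|>m$ survive and coercivity follows. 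You instead absorb the impulse into the multiplier functional $\calF=E-\Omg^{m,\bt}I$ and constrain only mass and symmetry. Then the $\cos(m\tht)$ mode remains an admissible direction: it is not removed by minimizing over rotations, and it is exactly the degenerate one. At $\bt=0$ the mode-$m$ coefficient of your quadratic form $L-Q$ is $\tfrac{1}{4m}-\tfrac{1}{4m}=0$ --- this vanishing is nothing but Kelvin's bifurcation condition selecting $\Omg^{m,0}=\tfrac12-\tfrac{1}{2m}$ --- so your key inequality is false at the disc and at best marginal for small $\bt$.

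To see that this is not a technicality, test your inequality against competitors on the branch itself: $\omg=\omg^{m,\bt''}$ with $\bt''$ near $\bt$ lies in your class $\calK$ (same normalized mass, $m$-fold symmetric, supported in $B_{\bar r}$) and satisfies $d(\omg)\gtrsim|\bt''-\bt|$. Since each $\omg^{m,\bt''}$ is a critical point of $E-\Omg^{m,\bt''}I$ on mass-preserving patch variations, a standard computation gives
\begin{equation*}
\calF[\omg^{m,\bt}]-\calF[\omg^{m,\bt''}] \;=\; -\tfrac12\,\frac{d\Omg^{m,\bt}}{d\bt}\,\frac{d}{d\bt}I[\omg^{m,\bt}]\,(\bt''-\bt)^2+o\bigl(|\bt''-\bt|^2\bigr),
\end{equation*}
so your inequality \emph{requires} the Grillakis--Shatah--Strauss-type sign condition $\frac{d\Omg^{m,\bt}}{d\bt}\cdot\frac{d}{d\bt}I[\omg^{m,\bt}]<0$. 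Both derivatives vanish at $\bt=0$ (evenness in $\bt$), so this is second-order bifurcation information: it needs the $O(\bt^2)$ expansion of $\Omg^{m,\bt}$ (Burbea's computation), which neither Kelvin's linear calculation nor the existence theory of \cite{HMV} that you invoke provides, and which the paper explicitly notes it does not need. This is the issue you flag as point (ii), but it is the crux of your scheme rather than a loose end, and your proposed fix is not a proof. (A second, smaller, gap: your class fixes $|A|=|A^{m,\bt}|$ exactly while the data are only $\dlt$-close in $L^1$; the paper resolves this, and the matching of the impulse, by adjusting the reference wave through $(\lmb,\bt',\tau)$ via the inverse function theorem rather than by changing the functional.) The repair closest to your setup is exactly the paper's: keep the impulse as a constraint, prove coercivity on $\calM_m$, and reduce general data to that class by adjusting the wave.
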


{The above result is \textit{conditional} in the sense that the perturbed solution needs to be assumed to stay in  {a given ball (see \eqref{cond_evol}) during the evolution,} and \textit{orbital} as it only gives that the perturbation is $L^1$ close only to \textit{some unspecified} rotation $\Tht(t)$ of the Kelvin wave.\footnote{
These  {two} restrictions have been lifted in Theorem \ref{cor:finite_time}, at the cost of restricting the solution to a finite (but arbitrarily large) time interval.}
 {Even assuming that the second condition can be removed, the first support condition \eqref{cond_evol} on the evolution is fatal in the sense that it seems to be guaranteed only for time of at most order 1.} \\

Let us now focus on how the latter issue is handled: 
orbital stability is very typical when one obtains stability by applying variational idea (\textit{e.g.} see \cite{Tang, Wan86, BNL13, AC2019, Burtonb2021, Choi2020, CQZZ-stab}). Intuitively, it is natural to expect that the perturbed solution is close not only to \textit{some unknown} $\Theta(t)$-rotation of the Kelvin wave but also to the actual  rotating Kelvin wave solution \eqref{act_v_state}. Therefore, to arrive at Theorem \ref{cor:finite_time} from Proposition \ref{thm:stable}, we need an intermediate result which shows that  if   $\Delta t\ll 1$, then
$$\Delta\Theta(t)\quad\mbox{is close to}\quad  \Omega^{m,\beta}\Delta t,\quad 
$$ that is, the perturbed solution \textit{almost} rotates with the angular speed $\Omega^{m,\beta}$ of the Kelvin wave.  {This is the second ingredient of the proof of Theorem \ref{cor:finite_time}.} \\
  
To state the result, we denote $
\mathbb{T}_m = \bbR/(\frac{2\pi}{m}\bbZ)$ by the torus of length $2\pi/m$, which we identify with the interval $[-\frac{\pi}{m},\frac{\pi}{m}).$ Since the Kelvin wave is $m$-fold symmetric, it is natural to assume that the rotation angle $\Omg^{m,\bt}t$ in \eqref{act_v_state} belongs to $\mathbb{T}_m$. Similarly, we shall view the function $\Tht$ as a map 
$\Theta(\cdot):[0,T)\to \mathbb{T}_m$, and denote the projection  $\mathcal{T}_m:\mathbb{R}\to\mathbb{T}_m$ by
$$
\mathcal{T}_m[\alp]=\tilde{\alp}\in\mathbb{T}_m \quad\mbox{if}\quad \alp-\tilde\alp=2k\pi/m\quad\mbox{for some integer}\quad k.
$$ 
\begin{proposition}[Estimate on the required rotation]\label{thm:refined}  
 {(continued from Proposition \ref{thm:stable})} For $m\geq2$, there {exist constants} $C_0, c_0, \beta_0>0$ such that if $\beta\in(0,
\min(\beta_0,\bar \beta)]$ and 
 if $\varepsilon\in(0, c_0\beta^2]$, then the function $\Theta(\cdot_t)$ 
 in Proposition \ref{thm:stable} satisfies
\begin{equation}\label{est_refined}
			|\mathcal{T}_m[\Theta(t'  ) - (\Theta(t) + \Omega^{m,\beta}(t'-t))   ] | \leq C_0\cdot \varepsilon^{1/2}\quad
			  \mbox{whenever}\quad   t,t'\in[0,T)\quad \mbox{satisfy}\quad |t-t'|\leq c_0\beta.
			 	\end{equation}
\end{proposition}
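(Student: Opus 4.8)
The plan is to track a single complex \emph{phase functional} which, for the rotating Kelvin wave, rotates at exactly the rate $m\Omg^{m,\bt}$, and to show that along the perturbed flow it rotates at the same rate up to an error which the hypotheses make $O(\varep^{1/2})$. Concretely, fix a smooth radial cutoff $\chi$ with $\chi\equiv1$ on $B_{\bar r}$ and compact support, and set $\Phi(x)=z^m\chi(|z|)$ with $z=x_1+ix_2$; equivalently $\Phi=e^{im\tht}f(r)$ with $f(r)=r^m\chi(r)$, so that $\Phi\in C^\infty_c(\bbR^2)$ and $\Phi(R_\alpha x)=e^{im\alpha}\Phi(x)$. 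Define
$$J[\omg]=\int_{\bbR^2}\omg\,\Phi\,\ud x.$$
Two properties are immediate: $J[\omg_\alpha]=e^{im\alpha}J[\omg]$ for any rotation, and a direct expansion using $g^{m,\bt}(\tht)=\bt\cos(m\tht)+o(\bt)$ shows that $J[\omg^{m,\bt}]$ equals $\pi r_0^{m+1}\bt$ to leading order (the higher modes $\cos(km\tht)$, $k>1$, being orthogonal to $e^{im\tht}$), so $|J[\omg^{m,\bt}]|\geq c\bt$ for all sufficiently small $\bt$. Write $J_0:=J[\omg^{m,\bt}]$.

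Next I would record the evolution law. By the weak formulation of \eqref{eq:Euler} with the test function $\Phi$, $t\mapsto J[\omg(t)]$ is absolutely continuous with
$$\tfrac{\ud}{\ud t}J[\omg(t)]=Q[\omg(t)],\qquad Q[\omg]:=\int\omg\,(u[\omg]\cdot\nb\Phi)\,\ud x,\qquad u[\omg]=-\nb^\perp(-\lap)^{-1}\omg.$$
The form $Q$ is quadratic and rotation covariant, $Q[\omg_\alpha]=e^{im\alpha}Q[\omg]$ (from $u[\omg_\alpha]=(u[\omg])_\alpha$ and $(\nb\Phi)(R_\alpha y)=e^{im\alpha}R_\alpha\nb\Phi(y)$). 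Applying the evolution law to the rigidly rotating solution $\omg^{m,\bt}_{\Omg^{m,\bt}t}$, for which $J=e^{im\Omg^{m,\bt}t}J_0$, and differentiating, I obtain the exact identity
$$Q[\omg^{m,\bt}_\Tht]=im\Omg^{m,\bt}J[\omg^{m,\bt}_\Tht]\qquad\text{for every }\Tht.$$

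The heart of the matter is the error estimate along the perturbed solution. Writing $\omg(t)=\omg^{m,\bt}_{\Tht(t)}+\rho(t)$, Proposition \ref{thm:stable} gives $\|\rho(t)\|_{L^1}\leq\varep$, while $\|\rho(t)\|_{L^\infty}\leq1$ and $\supp\rho(t)\subset B_{\bar r}$. Expanding the quadratic $Q$ and using the standard velocity bound $\|u[\rho]\|_{L^\infty}\aleq\|\rho\|_{L^1}^{1/2}\|\rho\|_{L^\infty}^{1/2}\aleq\varep^{1/2}$ together with $\|u[\omg^{m,\bt}_\Tht]\|_{L^\infty}\aleq1$ and $\|\nb\Phi\|_{L^\infty}\aleq1$, every cross term and the pure $\rho$ term is $O(\varep^{1/2})$ (the dominant one being $\int\omg^{m,\bt}_\Tht\,(u[\rho]\cdot\nb\Phi)$). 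Combined with the exact identity and $|J[\omg(t)]-J[\omg^{m,\bt}_{\Tht(t)}]|\aleq\varep$, this yields $\big|\tfrac{\ud}{\ud t}J[\omg(t)]-im\Omg^{m,\bt}J[\omg(t)]\big|\leq C\varep^{1/2}$. I would then integrate $\tfrac{\ud}{\ud t}(e^{-im\Omg^{m,\bt}t}J[\omg(t)])$ over $[t,t']$, replace $J[\omg(t)]$ by $e^{im\Tht(t)}J_0+O(\varep)$, and divide by $|J_0|\geq c\bt$ to reach
$$\big|e^{im\psi}-1\big|\leq \tfrac{C}{\bt}\big(\varep^{1/2}|t'-t|+\varep\big),\qquad \psi:=\mathcal{T}_m[\Tht(t')-\Tht(t)-\Omg^{m,\bt}(t'-t)].$$
The elementary inequality $|e^{im\psi}-1|\geq\tfrac{2m}{\pi}|\psi|$ on $\bbT_m$ converts this into a bound on $|\psi|$, and the constraints $|t'-t|\leq c_0\bt$ and $\varep\leq c_0\bt^2$ are exactly what makes $\tfrac1\bt(\varep^{1/2}|t'-t|+\varep)\aleq\varep^{1/2}$, giving \eqref{est_refined}.

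The step I expect to be the main obstacle is the error analysis of the quadratic functional $Q$: one must recognize that the term $\int\omg^{m,\bt}_\Tht\,(u[\rho]\cdot\nb\Phi)$ is only $O(\varep^{1/2})$, not $O(\varep)$, because the velocity of an $L^1$-small patch difference is controlled only at the level $\varep^{1/2}$ in $L^\infty$. This $\varep^{1/2}$ rate together with the normalization $|J_0|\ageq\bt$ is precisely what dictates the scaling $\varep\leq c_0\bt^2$, $|t'-t|\leq c_0\bt$ in the statement; once this balance is identified, the remaining estimates are routine. A minor additional point is that $\Tht$ need not be assumed continuous, since the argument integrates only the genuinely absolutely continuous quantity $J[\omg(t)]$ and uses $\Tht$ pointwise through $J[\omg^{m,\bt}_{\Tht(t)}]$.
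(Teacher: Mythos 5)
Your proof is correct, but it follows a genuinely different route from the paper. The paper never differentiates a moment functional in time: it works with the unweighted phase moment $I(t)=\int e^{im\tht}\omg(t)\,dx$ (whose gradient is singular at the origin, so it admits only crude Lipschitz-in-time bounds via the bounded flow speed), proves a short-time ``no large jump'' lemma for $\Tht$ on intervals of length $O(\varep)$, and then upgrades to intervals of length $O(\bt)$ by a Lagrangian bootstrap: particle trajectories of the perturbed solution are compared with trajectories of the rotating Kelvin wave via a three-term velocity decomposition and Gronwall, and a contradiction argument (using $|\bar{A}_{\Tht}\triangle \bar{A}_{\Tht'}|\gtrsim \bt\,|\mathcal{T}_m[\Tht-\Tht']|$) closes the bootstrap. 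You instead insert the smooth radial weight $\Phi=z^m\chi(|z|)$, which is precisely what makes the moment $J[\omg]=\int\omg\,\Phi$ differentiable along Yudovich solutions; the exact identity $Q[\omg^{m,\bt}_{\Tht}]=im\Omg^{m,\bt}J[\omg^{m,\bt}_{\Tht}]$ (a consequence of rigid rotation plus rotation covariance) subtracts off the leading dynamics, and the remaining error in $Q$ is $O(\varep^{1/2})$ by the same interpolation bound $\nrm{u[\rho]}_{L^\infty}\lesssim(\nrm{\rho}_{L^1}\nrm{\rho}_{L^\infty})^{1/2}$ the paper uses. Integrating the resulting differential inequality and dividing by $|J_0|\gtrsim\bt$ gives \eqref{est_refined} in one stroke, with the constraints $|t-t'|\le c_0\bt$, $\varep\le c_0\bt^2$ emerging from exactly the same scaling balance as in the paper. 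What your argument buys is brevity and the elimination of both the bootstrap and the trajectory analysis (and it handles possibly discontinuous $\Tht$ for the same reason the paper's does: $\Tht$ enters only pointwise through the $O(\varep)$ replacement $J[\omg(t)]=e^{im\Tht(t)}J_0+O(\varep)$). What the paper's longer route buys is the Lagrangian machinery itself -- the trajectory-comparison estimate and the containment of $\phi(t,(t_0,\bar{A}_{\Tht(t_0)}))$ in a thin neighborhood of the rotated Kelvin set -- which is then reused, in essentially the same form, in the proofs of Theorem \ref{cor:finite_time} and Theorem \ref{thm:instability}, so within the paper's architecture that work is not wasted.
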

}
 
 {For the third and last ingredient, we  employ the fact that trajectories of the Kelvin wave solution are closed curves, and they stay close to the rotating Kelvin set
$R_{\Omega^{m,\beta}t}[A^{m,\beta}]$ if they are close at the initial time. Then, thanks to conditional stability (Propositions \ref{thm:stable}, \ref{thm:refined}), we are able to prove the long time stability (Theorem \ref{cor:finite_time}). \\}
 
Now, as an application of Theorem \ref{cor:finite_time}, we obtain perimeter growth for finite but arbitrary long time for certain perturbations of the Kelvin wave.  
\begin{theorem}[Instability] \label{thm:instability} For each integer $m\geq 2$, there exists $C'>0$ such that if $\bt>0$ is sufficiently small,
	 then for any $M, \delta>0$, there exists an $m$-fold symmetric data $\omg_0 = \mathbf{1}_{A_0}$ with $C^\infty$ smooth boundary   $\partial A_0$  satisfying \begin{equation*}
	 	\begin{split}
	 		\|\omega_0-\omega^{m,\beta}\|_{L^1(\mathbb{R}^2)}\leq \delta, \qquad \mathrm{perim}({A_0})\leq 20
	 	\end{split}
	 \end{equation*} such that the corresponding solution 	$\omega(t)= \mathbf{1}_{A(t)}$ satisfies \begin{equation*}
		\begin{split}
	 \sup_{t\in[0,C'M]}\mathrm{perim}({A(t)})\geq  M.
		\end{split}
	\end{equation*} 
\end{theorem}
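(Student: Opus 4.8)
The plan is to turn the stability statement of Theorem \ref{cor:finite_time} into an instability statement by realizing the filamentation mechanism of Figure \ref{fig:fila}: the bulk of $A(t)$ is pinned near the rotating Kelvin wave by Theorem \ref{cor:finite_time}, while a small piece of vorticity dropped into the exterior annulus is wound up by the differential rotation of the ambient (nearly disc) flow, so that its arclength --- hence $\mathrm{perim}(A(t))$ --- grows linearly in $t$. Concretely, I would fix $m$, take $r'>1$ from Theorem \ref{cor:finite_time}, and note that since $\Omega^{m,\beta}\to\tfrac12(1-\tfrac1m)<\tfrac12$ as $\beta\to0$ the co-rotating stagnation radius $r_*:=(2\Omega^{m,\beta})^{-1/2}$ exceeds $1$. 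I then pick, depending only on $m$, radii $1<r_a<r_b$ and a margin $\mu>0$ with $r_a-\mu>1$ and $r_b+\mu<\min\{r',r_*\}$, let $F_0$ be a thin smooth simply connected sliver inside $\{r_a<|x|<r_b,\ |\arg x|<w\}$, and set $A_0=A^{m,\beta}\cup\bigcup_{k=0}^{m-1}R_{2\pi k/m}[F_0]$, with corners rounded so $\partial A_0\in C^\infty$. As the slivers are pairwise disjoint and disjoint from $A^{m,\beta}$, one has $\|\omega_0-\omega^{m,\beta}\|_{L^1}=m|F_0|$ and $\mathrm{perim}(A_0)\le\mathrm{perim}(A^{m,\beta})+m\,\mathrm{perim}(F_0)$; shrinking the angular width $w$ drives the first quantity to $0$, while shrinking $r_b-r_a$ (permissible, since $C'$ may depend on $m$) keeps the perimeter below $20$. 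Also $A_0\subset B_{r'}$.

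Next, given $M$ I would set $T:=C'M$ for a constant $C'=C'(m)$ to be fixed at the end, let $\varepsilon>0$ be a free small parameter, and take $m|F_0|\le\delta:=\delta(m,\beta,\varepsilon,T)$ from Theorem \ref{cor:finite_time}, so that $\sup_{[0,T]}\|\omega(t)-\omega^{m,\beta}_{\Omega^{m,\beta}t}\|_{L^1}\le\varepsilon$. Since both vorticities are characteristic functions their difference is $\le1$ in $L^\infty$ and $\le\varepsilon$ in $L^1$, so writing $u[f]:=-\nabla^\perp(-\Delta)^{-1}f$ and invoking the elementary bound $\|u[f]\|_{L^\infty}\lesssim\|f\|_{L^1}^{1/2}\|f\|_{L^\infty}^{1/2}$ gives
\[
\sup_{t\in[0,T]}\big\|u(t)-u[\omega^{m,\beta}_{\Omega^{m,\beta}t}]\big\|_{L^\infty(\mathbb R^2)}\le C\varepsilon^{1/2}.
\]
The point of routing through $L^\infty$ is that this estimate already absorbs the self-velocity of the (thin, small-circulation) filament. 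For $\beta$ small, $u[\omega^{m,\beta}]$ is moreover $O(\beta)$-close in $\{|x|>1\}$ to the disc field, whose exterior angular velocity is exactly $(2r^2)^{-1}$.

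The crux --- and what I expect to be the main obstacle --- is long-time radial confinement. Let $\Phi_t$ be the area-preserving flow of $u(t,\cdot)$, so $\partial A(t)=\Phi_t(\partial A_0)$ and $\mathrm{perim}(A(t))=\mathrm{length}(\Phi_t(\partial A_0))$; fix the sub-arc $\gamma_0\subset\partial F_0$ joining a point $p$ at radius $r_a$ to a point $q$ at radius $r_b$, and set $P(t)=\Phi_t(p)$, $Q(t)=\Phi_t(q)$. I must keep $P,Q$ inside $\{r_a-\mu\le|x|\le r_b+\mu\}$ for all $t\le T\sim M$, but the naive estimate $|\dot r|\lesssim\beta+\varepsilon^{1/2}$ only bounds the radial drift by $O((\beta+\varepsilon^{1/2})T)$, which is not small. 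To exclude secular drift I pass to the frame rotating at $\Omega^{m,\beta}$; by rotation-equivariance of Biot--Savart the co-rotating trajectory $\tilde X$ obeys $\dot{\tilde X}=\nabla^\perp\Psi+O(\varepsilon^{1/2})$, where $\Psi$ is the time-independent co-rotating stream function with $\nabla^\perp\Psi=u[\omega^{m,\beta}]-\Omega^{m,\beta}x^\perp$, so that
\[
\frac{d}{dt}\,\Psi(\tilde X(t))=\nabla\Psi\cdot\big(\nabla^\perp\Psi+O(\varepsilon^{1/2})\big)=O\big(\varepsilon^{1/2}\,|\nabla\Psi|\big).
\]
On the chosen annulus $|\nabla\Psi|$ is bounded above and below (it vanishes only on the circle $|x|=r_*$, which we avoided), so by a standard continuity argument the value of $\Psi$, and with it the radius, drifts by at most $C\varepsilon^{1/2}T$; choosing $\varepsilon$ with $C\varepsilon^{1/2}T\le\mu$ confines $P,Q$ as required. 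This is exactly the closed-trajectory structure already exploited for Theorem \ref{cor:finite_time}.

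Finally I would cash in the differential rotation. On this annulus the fixed-frame angular velocity of a material point at radius $r$ is $(2r^2)^{-1}+O(\beta)+O(\varepsilon^{1/2})$, the $O(\beta)$ multipole corrections being oscillatory --- hence non-accumulating --- because $(2r^2)^{-1}\neq\Omega^{m,\beta}$ off $r_*$, so the point and the wave turn at different rates. Tracking the continuous angular lifts $\vartheta_P,\vartheta_Q\in\mathbb R$ then gives, for $\beta,\varepsilon$ small,
\[
\vartheta_P(t)-\vartheta_Q(t)=\Big(\tfrac{1}{2r_a^2}-\tfrac{1}{2r_b^2}\Big)t+O(\beta)+O(\varepsilon^{1/2}t)\ \ge\ c_1t-C,\qquad c_1=c_1(m)>0,
\]
and lifting the angle continuously along $\gamma_t=\Phi_t(\gamma_0)$, on which $|x|\ge1$,
\[
\mathrm{perim}(A(t))\ \ge\ \mathrm{length}(\gamma_t)\ \ge\ \int_{\gamma_t}|x|\,|d\vartheta|\ \ge\ |\vartheta_P(t)-\vartheta_Q(t)|\ \ge\ c_1t-C.
\]
Hence $\mathrm{perim}(A(t))\ge M$ for some $t\le(M+C)/c_1$. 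Since $\mathrm{perim}(A_0)\ge\mathrm{perim}(A^{m,\beta})>2\pi-1$ settles every $M\le 2\pi-1$ at $t=0$, it remains to fix $C'=C'(m)$ large enough that $(M+C)/c_1\le C'M$ for all $M>2\pi-1$, which is possible because $c_1$ and $C$ depend only on $m$ (and this choice precedes the smallness of $\beta$). This yields the desired growth, with the radial confinement step being the only genuinely delicate point.
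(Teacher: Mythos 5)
Your proposal is correct, and it follows the same overall architecture as the paper's proof: invoke Theorem \ref{cor:finite_time} up to time $T=C'M$, convert the $L^1$ vorticity bound into an $L^\infty$ velocity bound via the interpolation inequality \eqref{defn_hat_C}, confine two marked boundary points to disjoint thin annuli on $[0,T]$, and let the differential rotation of the nearly-disc exterior field $r\mapsto (2r^2)^{-1}$ produce a winding difference growing linearly in $t$, which forces linear growth of the length of the connecting boundary arc. The genuine differences are in the middle steps. First, your initial data consists of $m$ disjoint slivers placed strictly outside the patch in $\{r_a<|x|<r_b\}\subset(1,r_*)$, whereas the paper attaches a connected bump to the patch and marks one point at radius $r_1=1$ (using $\partial A^{m,\beta}\cap\{r=1\}\neq\emptyset$) and one at $r_2>1$. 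Second, and more substantively, your confinement step uses approximate conservation of the co-rotating relative stream function $\Psi=\psi^{m,\beta}$ along the \emph{perturbed} trajectories, via $\tfrac{d}{dt}\Psi(\tilde X)=\nabla\Psi\cdot O(\varepsilon^{1/2})$, which yields a drift linear in $T$; the paper instead runs a Gronwall comparison of the perturbed trajectories against the \emph{Kelvin} trajectories and quotes Lemma \ref{lem:eta}(II) (Kelvin trajectories are trapped in thin annuli), paying an $e^{C_{Lip}T}$ factor that is then absorbed by taking $\varepsilon$, hence $\delta$, small after $T$ is fixed. Both routes rest on the same structural fact, namely non-degeneracy of the level sets of $\psi^{m,\beta}$ away from the stagnation circle $r_*$, and both are legitimate because $\varepsilon$ is chosen after $T$; yours avoids the exponential factor and is arguably cleaner at this point.

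Two spots should be made explicit. (i) The confinement must be applied to \emph{every} point of the arc $\gamma_0$, not only to $p$ and $q$: since all of $\gamma_0$ starts in $\{r_a\le|x|\le r_b\}$, the same conservation argument keeps all of $\gamma_t$ at radius at least $r_a-\mu>1$. This is what justifies your assertion that $|x|\ge 1$ on $\gamma_t$, and it is also needed for the tacit identification of the angular lift along $\gamma_t$ with the trajectory lifts $\vartheta_P,\vartheta_Q$, which is a homotopy/degree argument requiring that no curve $\gamma_s$, $s\le t$, meets the origin. (The paper is equally terse on this point, and its construction admits the same fix.) (ii) The remark that the $O(\beta)$ corrections to the angular velocity are ``oscillatory, hence non-accumulating'' is unnecessary and unproved; the crude bound in which the $O(\beta)$ error accumulates linearly in $t$ suffices, since $\beta$ is small depending only on $m$, $r_a$, $r_b$, so this error is absorbed into $c_1$ exactly as in the paper's choice of $\bar U^1,\bar U^2$.
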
 
{We remark that the \textit{improved} stability statement (Theorem \ref{cor:finite_time}), rather than a conditional orbital stability result (Proposition \ref{thm:stable}), is essential in the proof of instability. While we have proved a similar instability result for perturbations of the Lamb dipole in \cite{CJ_lamb} (see also \cite{CJ_Hill} for the case of the Hill's spherical vortex), in this previous work the filament is separating from the center of the perturbed dipole linearly in time, which makes the proof much easier.
}

\subsection{Previous works on stability of $V$--states}

 {The Kelvin waves, and more generally uniformly rotating patch solutions, commonly referred to as ``$V$--states'', to Euler have been intensively studied in the past decades: existence and rigidity of $V$--states (\cite{Burbea1982,HMV,CLW,HHH,HMW,HZTM,HHMV,GS2019,HM16,Tu83,CCG,CCG-duke,EJSVP2,GPSY,Jae1,Jae2,EJS}), linear and nonlinear stability (\cite{Love,Wan86,Tang,CWW,SV09,WP85, CL_radial}), instability (\cite{GuoHS,CoTi,CJ}), numerical computations (\cite{Carrillo2000,Kamm,HZTM,DeZa,ZHR,Drit}). \\}

Let us briefly review the existing results on the stability of $V$--states. The basic idea is to characterize a given $V$--state as the \textit{unique} extremizer of a conserved quantity in an appropriate admissible class. Arnol'd {\cite{A66}} suggested to use the kinetic energy, which is natural since steady solutions are characterized by critical points of the energy. {We also refer to the recent work (\cite{GS21}) for discussions.}
 Some serious work was necessary to apply this idea to the patch case, as it is not a smooth solution to Euler. For the case of the disc $\omg=\mathbf{1}_{B}$, this was achieved by Wan--Pulvirenti (\cite{WP85}) and Tang (\cite{Tang}): the circular patch is actually the unique energy maximizer under a \textit{mass constraint}, which gives nonlinear stability for perturbations in $L^1$. A different approach is to observe that the circular patch is the unique impulse minimizer under a mass constraint, which again yields nonlinear stability in \cite{SV09} (also see \cite{CL_radial}). Indeed, the energy and impulse are two different coercive conservation laws for the two-dimensional vorticity equation. It turns out that, in the case of the ellipse, one can show that upon fixing \textit{both} the mass and impulse, each ellipse for the aspect ratio between 1 and 3 is the unique local maximizer of the energy (\cite{Tang}). The threshold $3$ is sharp, as suggested by previous linear analysis (\cite{Love}) and nonlinear instability for larger aspect ratios (\cite{GuoHS}). \\
 
Under the additional constraint of $m$-fold symmetry, Kelvin waves close to the disc (\textit{i.e.} $0<\beta\ll1$) for each $m\ge3$ can be characterized by the unique local maximum of energy, as stated in Wan \cite{Wan86}. 
 Even though the stability requires $m$-fold symmetry of perturbations, this can be used to prove filamentation simply by taking symmetric perturbations (proof of instability requires stability).  It is interesting that when $m=2$ (\textit{i.e.} Kirchhoff's ellipses), the stability  was obtained not only for  small $\beta>0$ but also up to the aspect ratio 3. It is mainly due to the fact that the stream {function} for each ellipse is explicitly known (\textit{e.g.} see \cite{Lamb}) so that the computation in spectral analysis  in \cite{Tang}  is exact while the   representation \eqref{defn_v_sta} of Kelvin waves for $m\geq3$ works only for small $\beta$. \\
 
  Lastly, we note that when proving stability of steady  solutions of the Euler equations,
  \textit{monotonicity} of the profiles is frequently assumed   
  (\textit{e.g.} see \cite{MaP85,  LinZeng11, BesMas,  Zill17,
Wei_cpam, IonJia_cmp,
 BeCZV19,   CL_radial}) because {this} property gives coercivity in {a} certain sense.
In the Appendix, we demonstrate that the same \textit{spectral} approach can give nonlinear stability for patches supported on an \textit{annulus} by imposing  $m$-fold symmetry for large $m$ to perturbations. 
It is interesting to study stability of such  an annular  patch since it is  a \textit{non-monotone}, radial steady solution.

\subsection*{Organization of the paper} The rest of the paper is organized as follows. In \S \ref{sec:prelim}, we collect a few basic facts about the two-dimensional Euler equations and derive the asymptotic rotation speed of the Kelvin $m$-waves. Then, Proposition \ref{thm:stable}  
is proved in \S \ref{sec:stab}.  Lastly in \S \ref{sec:fila}, we prove
Proposition \ref{thm:refined}, Theorem \ref{cor:finite_time}, and Theorem \ref{thm:instability}. Sections 3 and 4 begin with an overview of the proof. 
In the Appendix, we discuss stability of annular  patches.
\section{Preliminaries}\label{sec:prelim}

\subsection{Two-dimensional incompressible Euler}\label{subsec:Euler}

For the two-dimensional Euler equations, the stream function is defined by \begin{equation*}\label{eq:str}
	\begin{split}
		G[\omg] (x) = (-\lap)^{-1}\omg (x) := \frac{1}{2\pi} \int \ln \frac{1}{|x-x'|} \omg (x') dx' . 
	\end{split}
\end{equation*} When $\omg$ is bounded and compactly supported in $\bbR^2$, then we have that $G[\omg] \in C^{1,\alp}_{loc}(\bbR^2)$ with any $\alp<1$. The energy functional is defined by \begin{equation*}\label{eq:energy}
\begin{split}
	E[\omg] = \frac12 \brk{\omg, G\omg}= \frac{1}{4\pi} \int_{\bbR^2} \int_{\bbR^2} \omg(x) \omg(x') \ln \frac{1}{|x-x'|}  dx dx' .
\end{split}
\end{equation*} For bounded solutions to \eqref{eq:Euler} decaying sufficiently fast at infinity, it is not difficult to check that $E$ is a conserved quantity in time. We just remark that, strictly speaking, $E$ is not the kinetic energy of the fluid unless $\omg$ is of mean zero in $\bbR^{2}$: in this case, we have \begin{equation*}
\begin{split}
	E[\omg] = \frac12 \int_{\bbR^2} |\nb G\omg |^{2} dx \ge0. 
\end{split}
\end{equation*} In general, $E[\omg]$ is not positive and this quantity is sometime referred to as the \textit{pseudo-energy} in the literature.

\subsection{Kelvin waves}\label{subsec:Kelvin}
{Let $r_0>0$ and } denote $B$ by the open ball with radius $r_0$  centered at the origin. Then, it is not difficult to check that in polar coordinates, the corresponding stream function is given by \begin{equation*}\label{eq:disc-stream}
	\begin{split}
		G [\mathbf{1}_{B}] (r,\tht) = \begin{cases}
			\frac14 (r_0^2 - r_0^2 \ln r_0^2 - r^2), & 0 \le r \le r_0, \\
			-\frac12 r_0^2 \ln r,  & r > r_0. 
		\end{cases}
	\end{split}
\end{equation*} Let us now revisit the computation of Kelvin, and assume that there exists a uniformly rotating patch $\omg^{m,\bt}$ with boundary $r_{0} + g(\tht)$ with $g \simeq \bt \cos(m\tht)$. We shall {give a sketch of the derivation of} the rotation speed $\Omg^{m,\bt}$ in the limit $\bt\to0$. {A rigorous argument is given in \cite{HMV}.} For some $C>0$, once we define the \textit{relative stream function} by 
\begin{equation}\label{eq:psi-mbt}
	\begin{split}
		\psi^{m,\bt} = G\omg^{m,\bt} + \frac12 \Omg^{m,\bt} r^2 + C  ,
	\end{split}
\end{equation} so that for all $\tht \in [0,2\pi]$, we have \begin{equation*}
\begin{split}
	\psi^{m,\bt}(r_{0}+ g(\tht),\tht) \equiv 0.
\end{split}
\end{equation*}  {Introducing for convenience $\zt := G(\omg^{m,\bt}-\mathbf{1}_{B})$}, by differentiating the above relation in $\tht$, we obtain \begin{equation}\label{eq:stream-relation}
\begin{split}
	0 = (-\frac12 + \Omg^{m,\bt})r_{0} \rd_\tht g^{m,\bt}(\tht) + \rd_{r}\zt   \rd_\tht g^{m,\bt}(\tht) + \rd_{\tht}\zt . 
\end{split}
\end{equation} Note that  
\begin{equation*}
\begin{split}
	\zt (r,\tht) = \frac{1}{2\pi} \int_{S^1} \int_{r_{0} }^{r_{0} + g(\tht')}     \ln \frac{1}{|re^{i\tht} - r' e^{i\tht'} | } r' dr' d\tht'.
\end{split}
\end{equation*} Assuming $\int \omg^{m,\bt} = \int \mathbf{1}_{B}$, we may expand the above in the case $r > r_{0} - |g(\tht')|$ as follows:  using $	g(\tht) = \bt\cos(m\tht) + o(\bt)$ and that the small term is orthogonal to $1, \cos(m\tht)$, 
\begin{equation*}
	\begin{split}
		\zt(r,\tht) &=  \frac{1}{2\pi} \int_{S^1} \int_{r_{0} }^{r_{0} + g(\tht')}  \Re \sum_{n\ge1} \frac{1}{n} \left( \frac{r'}{r} \right)^{n} e^{in(\tht'-\tht)}  r' dr' d\tht' = \frac{1}{2m} \frac{r_{0}^{m+1}}{r^{m}}\bt \cos(m\tht) + o(\bt). 
	\end{split}
\end{equation*}  Similarly, one may compute that \begin{equation*}
\begin{split}
	\rd_{\tht}\zt (r,\tht)= -\frac{r_0^{m+1}}{2 r^{m}} \bt \sin(m\tht) + o(\bt), \qquad 	\rd_{r}\zt (r,\tht) = -\frac{r_{0}^{m+1}}{2r^{m+1}}  \bt \cos(m\tht) + o(\bt). 
\end{split}
\end{equation*} Since we know that $\zt \in C^{1,\alp}$ for any $\alp<1$, these formulas can be justified up to the boundary of the rotating patch. Applying these to \eqref{eq:stream-relation}, we obtain that 
\begin{equation*}
	\begin{split}
		\Omg^{m,\bt} = \frac12 - \frac{1}{2m} +  {o(\bt)} . 
	\end{split}
\end{equation*} {The remainder term seems to be of order $\bt^2$ (see \cite{Burbea1982}), but we shall not need this fact in what follows.}

\section{Stability of Kelvin waves}\label{sec:stab}

\subsection{Outline of the proof of Proposition \ref{thm:stable}}\label{subsec:stab-outline}

This section is devoted to the proof of the stability result. To state the key proposition, let us first define the following class of perturbations, fixing some $m\ge2$ and $\bt>0$. The value of $\bt$ will be assumed to be sufficiently small whenever it becomes necessary, but in a way depending only on $m$. {(Inspecting the proof, one can see that $\bt \lesssim m^{-2}$ is sufficient.)} We take \begin{equation*}
	\begin{split}
		\calM(\omg^{m,\bt}) := \left\{  \omg = \mathbf{1}_{A} \, : \, \int r^{m}\sin(m\tht)\omg(x) dx = 0, \int r^2( \omg(x) - \omg^{m,\bt} ) = 0,\int ( \omg(x) - \omg^{m,\bt} ) = 0 \right\}.
	\end{split}
\end{equation*} Then, we say $\omg \in \calM_m(\omg^{m,\bt})$ if $\omg \in \calM(\omg^{m,\bt}) $ and furthermore $\omg$ is $m$-fold symmetric. Next, given an open set $D$, we define the class 
\begin{equation*}
	\begin{split}
		 \calN_{\varep,D}(\omg^{m,\bt}) := \left\{  \omg =\mathbf{1}_{A}\, : \, A\subset D, \nrm{\omg-\omg^{m,\bt}}_{L^{1}}< \varep\right\}. 
	\end{split}
\end{equation*} We are now ready to state the key technical result of this section, which shows that within the class $\calN_{\varep,D}\cap \calM_{m}(\omg^{m,\bt})$ for  {$D = B_{\bar{r}r_0}$ with some $\bar{r}>1$}, $\omg^{m,\bt}$ is the \textit{strict} maximizer of the energy.  {As before, we shall fix $r_0 = 1$ for simplicity.}
\begin{proposition}\label{prop:key}
	For any $m\ge 2$, there exist $\bt_{0}>0$ and $\bar{r}>1$ depending on $m$ such that the following statement holds. For the Kelvin $m$-wave with parameter $0<\bt<\bt_0$, there exist $C, \varepsilon >0$ depending on $m, \bt$ such that \begin{equation}\label{eq:key}
		\begin{split}
			E[\omg^{m,\bt}] - E[\omg] \ge C \nrm{\omg^{m,\bt} - \omg }_{L^{1}}^{2}
		\end{split}
	\end{equation} for any $\omg \in \calN_{\varep,B_{\bar{r}}}\cap \calM_{m}(\omg^{m,\bt})$. 
\end{proposition}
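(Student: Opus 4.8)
The plan is to reduce the energy comparison to a coercivity estimate for a quadratic form and then analyze that form mode by mode in a Fourier basis adapted to the $m$-fold symmetry. Writing $h := \omg - \omg^{m,\bt}$, which is a $\{0,\pm 1\}$-valued function supported in the symmetric difference $A\triangle A^{m,\bt}$, I would first expand
\[ E[\omg^{m,\bt}] - E[\omg] = -\brk{G\omg^{m,\bt}, h} - \tfrac12\brk{h, Gh}. \]
Because $\omg\in\calM_m(\omg^{m,\bt})$ fixes both the mass $\int h = 0$ and the impulse $\int r^2 h = 0$, I may freely add $\tfrac12\Omg^{m,\bt}\int r^2 h + C\int h = 0$ and replace $G\omg^{m,\bt}$ by the relative stream function $\psi^{m,\bt} = G\omg^{m,\bt} + \tfrac12\Omg^{m,\bt}r^2 + C$ from \eqref{eq:psi-mbt}, so that the claim \eqref{eq:key} becomes a lower bound for
\[ \mathcal Q[h] := -\brk{\psi^{m,\bt}, h} - \tfrac12\brk{h, Gh}. \]

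The first (linear-in-$h$) term is the good, sign-definite part. Using \eqref{eq:psi-mbt} together with the disc computation of \S\ref{subsec:Kelvin}, for $\bt = 0$ one finds $\psi = \tfrac{1}{4m}(1-r^2)$ inside $B$ and $\rd_r\psi(1) = -\tfrac{1}{2m}$, so $\psi>0$ inside the disc, $\psi<0$ on an annulus $1<r<r_*$ with $r_* = (1-1/m)^{-1/2}$, and $|\rd_n\psi|\simeq\tfrac{1}{2m}$ across $\partial B$. Choosing $\bar r\in(1,r_*)$ and invoking the $C^\infty$ (indeed analytic) regularity and $\bt$-continuity of $\partial A^{m,\bt}$ and $\psi^{m,\bt}$ from \cite{HMV}, these properties persist for small $\bt$: $\psi^{m,\bt}>0$ in $A^{m,\bt}$, $\psi^{m,\bt}<0$ in $B_{\bar r}\setminus A^{m,\bt}$, and $|\nb\psi^{m,\bt}|\gtrsim m^{-1}$ near $\partial A^{m,\bt}$. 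Since $h = +1$ outside $A^{m,\bt}$ and $h=-1$ inside, the integrand $-\psi^{m,\bt}h\ge 0$ pointwise on $B_{\bar r}$, and here is exactly where the support condition $A\subset B_{\bar r}$ is used. Parametrizing $\partial A$ by $r = r_0 + g^{m,\bt}(\tht) + s(\tht)$, this yields $-\brk{\psi^{m,\bt},h}\simeq\tfrac12\int|\rd_n\psi^{m,\bt}|\,s^2$, a positive-definite quadratic form in the boundary displacement $s$.

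Next I would recast the whole of $\mathcal Q$ as a quadratic form in $s$ and diagonalize it in Fourier modes. A single-layer computation (using $-\ln|e^{i\tht}-e^{i\tht'}| = \sum_{k\ge1}\tfrac1k\cos k(\tht-\tht')$) gives, at $\bt = 0$, the mode-$k$ symbol $\tfrac14(\tfrac1m - \tfrac1k)$. Two features drive the proof: $m$-fold symmetry confines $s$ to modes $k\in\{0,m,2m,\dots\}$, discarding precisely the unstable low modes $1\le k<m$; and among the surviving modes, $k=0$ is lethal (the logarithmically divergent self-energy term) but is removed by the mass and impulse constraints, while every $k=jm$ with $j\ge2$ is strictly stable with symbol $\gtrsim m^{-1}$, a gap stable under $O(\bt)$ perturbations of the form. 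The $\sin(m\tht)$ component of the remaining mode $k=m$ (the infinitesimal rotation) is annihilated by the constraint $\int r^m\sin(m\tht)\,\omg = 0$, which is the gauge-fixing responsible for the orbital nature of the statement.

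The main obstacle is the single marginal direction that remains: the $\cos(m\tht)$ component of mode $k=m$, whose symbol vanishes identically at $\bt=0$ (it is the tangent to the Kelvin branch). I expect this to force the coercivity constant $C$ in \eqref{eq:key} to degenerate as $\bt\to 0$, consistent with the threshold $\varep\lesssim\bt^2$ appearing in Proposition \ref{thm:refined}. The crux is therefore to show that, after projecting onto the constraint tangent space (so that a $\cos(m\tht)$ displacement is accompanied by the forced $O(\nrm{s}^2)$ adjustment in the $k=0$ mode dictated by the mass and impulse constraints), the mode-$m$ block of the second variation becomes strictly positive of size $O(\bt^2)$ for $\bt>0$. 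This requires the finer expansion of $\Omg^{m,\bt} = \tfrac12-\tfrac1{2m}+O(\bt^2)$, of the boundary curvature, and of the kernel along the curved boundary, i.e. the quantitative structure of the branch from \cite{Burbea1982, HMV}, and pinning down the correct sign is the delicate point. Once this block is shown positive, I would assemble the mode estimates into $\mathcal Q[h]\gtrsim\nrm{s}_{L^2}^2$, control the cubic-and-higher errors from linearizing the boundary (using that $\varep$ small and $A\subset B_{\bar r}$ confine $h$ to a thin analytic collar of $\partial A^{m,\bt}$), and finally conclude via $\nrm{s}_{L^2}^2\gtrsim\nrm{s}_{L^1}^2\simeq\nrm{h}_{L^1}^2$, which is the desired \eqref{eq:key}.
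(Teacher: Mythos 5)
Your mode-by-mode analysis for \emph{graph-type} perturbations is essentially the paper's Proposition \ref{prop:key-graph} (the $m$-fold symmetry killing modes $0<n<m$, the gap $\tfrac{1}{2n}\le\tfrac{1}{2(m+1)}$ for $n>m$, the constraints handling $n=0$ and $n=m$, stability of all this under $O(\bt)$ perturbation of the kernel). But your proposal has two genuine gaps.

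First, you never reduce a general admissible perturbation to a graph-type one, and the justification you offer cannot work: $L^1$-closeness to $\omg^{m,\bt}$ together with $A\subset B_{\bar r}$ does \emph{not} confine $A\triangle A^{m,\bt}$ to a thin collar of $\partial A^{m,\bt}$ (consider removing a small disc from the interior of $A^{m,\bt}$ and adding one near $\partial B_{\bar r}$; this is $L^1$-small but has no boundary parametrization $r = r_0+g^{m,\bt}+s$). This matters because in your expansion around $\omg^{m,\bt}$ the quadratic term $-\tfrac12\brk{h,Gh}$ has the \emph{unfavorable} sign (it is $\le 0$ since $\int h=0$), so it cannot be discarded and can only be beaten by Fourier analysis, which requires the graph structure. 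The paper resolves this with a two-step decomposition, \eqref{eq:tomg-1}--\eqref{eq:tomg-2}: Lemma \ref{lem:reduction-ImFT} uses the inverse function theorem to build an auxiliary graph-type patch $\tilde\omg$ as the inner super-level set of the \emph{perturbed} stream function $G\omg_1+\tfrac12\mu_1 r^2+\mu_2$, and Lemma \ref{lem:energy-compare1} compares $E[\tilde\omg]$ with $E[\omg_1]$: in that comparison \emph{both} terms have favorable signs (the quadratic one because $\tilde\omg-\omg_1$ has mean zero, the linear one by the level-set construction and the support condition), which is exactly what makes general, non-graph perturbations tractable. Your outline contains no substitute for this step.

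Second, your treatment of the marginal $\cos(m\tht)$ mode misreads how the impulse constraint acts, and leaves the declared ``crux'' unproved. At the Kelvin wave (not the disc), linearizing $\int r^2(\omg-\omg^{m,\bt})=0$ produces the term $2r_0\int g^{m,\bt}q\,d\eta \approx 2r_0\bt\int\cos(m\eta)\,q\,d\eta$, so the constraint pins down the $\cos(m\eta)$ coefficient of $q$ itself at linear order (with a factor $\bt$), yielding \eqref{eq:qm-cos}: $\int\cos(m\eta)q = O(\bt^{-1}\nrm{h}^2)+o(\nrm{h})$, negligible once $\nrm{h}_{L^2}\lesssim\bt^2$. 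In other words, the tangent direction to the Kelvin branch is \emph{not} in the constraint tangent space for $\bt>0$; it is excluded outright, together with the $\sin(m\eta)$ direction via \eqref{eq:qm-sin}. Consequently there is no reduced mode-$m$ block whose $O(\bt^2)$ positivity must be verified, and the delicate sign computation you flag as the main open point (requiring the $O(\bt^2)$ expansion of $\Omg^{m,\bt}$, the curvature, and the kernel) is both unnecessary and, in your proposal, not carried out. The price of the paper's route is only that $\varep$ must shrink like $\bt^2$, which is where the $\bt$-dependence of $\varep$ in the statement comes from. (A minor slip: your $r_*=(1-1/m)^{-1/2}$ is the critical point of $\psi^{m,0}$ outside the disc, not its zero; the true zero is larger, e.g.\ $r^*\simeq 1.87$ for $m=2$, so your choice of $\bar r$ is still admissible.)
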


In \S \ref{subsec:stab}, we show how our main stability (Proposition \ref{thm:stable}) follows from the above proposition, which is rather straightforward. Then, the remainder of this section is devoted to establishing Proposition \ref{prop:key}. The structure of the argument is parallel to that for the ellipse stability by Tang \cite{Tang},  {which corresponds to the case $m=2$}, and mainly consists of two steps: (i) reduction to a graph-type perturbation and (ii) energy comparison for graph type perturbations. To be more precise, given $\omg$ satisfying the assumptions of Proposition \ref{prop:key}, we shall find $\tilde{\omg}$ such that \begin{equation}\label{eq:tomg-1}
	\begin{split}
		E[\omg^{m,\bt}] - E[\tilde{\omg}] \ge C \nrm{\omg^{m,\bt} - \tilde{\omg} }_{L^{1}}^{2}
	\end{split}
\end{equation} and \begin{equation}\label{eq:tomg-2}
	\begin{split}
		-E[\omg] + E[\tilde{\omg}] \ge C \nrm{\omg - \tilde{\omg} }_{L^{1}}^{2}
	\end{split}
\end{equation} holds. Combining the above two inequalities, we obtain \eqref{eq:key}. We shall identify such a $\tilde{\omg}$ and prove \eqref{eq:tomg-2} in \S \ref{subsec:redu}. Then, \eqref{eq:tomg-1} is proved in \S \ref{subsec:spec}: this part is the heart of the matter and requires a spectral analysis of the linearized operator coming from the Green's function for the Laplacian. 

In what follows, we shall use a simple change of variables $(r,\tht) \to (\xi,\eta) $ near $\{ r = r_0\}$, so that $\eta(r,\tht)=\tht$ and $\xi(r,\tht) = r - g^{m,\bt}(\tht)$. Then, $\{ \xi = r_{0} \}$ corresponds to $\partial A^{m,\bt}$. The Jacobian $J = J^{m,\bt}$ from $x = (x_1,x_2)$ to $(\xi,\eta)$ is $\xi + g^{m,\bt}(\tht) = \xi + \bt \cos(m\tht) + o(\bt)$. 

\subsection{Proof of stability} \label{subsec:stab}
In this section, we show how Proposition \ref{thm:stable} follows readily from Proposition \ref{prop:key}. This procedure is straightforward, the key point being that the energy difference is controlled by the $L^1$ difference of vorticities. First, we prove an intermediate result, namely nonlinear $L^1$ stability under the natural constraints on the initial vorticity. 

\begin{lemma}\label{lem:stab}
	Fix some $m\ge2$ and assume that $0<\bt$ is sufficiently small. Then, for any sufficiently small $\varep>0$, there exists $\dlt>0$ such that for $\tilde{\omg}_{0} \in \calM_m \cap \calN_{\dlt,B_{\bar{r}}}(\omg^{m,\bt}),$ we have \begin{equation}\label{eq:stab}
	\begin{split}
		\nrm{ \tilde{\omg}(t, \cdot) - \omg^{m,\bt}_{t'} }_{L^{1}} < \varep,{\quad\forall t\geq0,} 
	\end{split}
\end{equation} for some $t'=t'(t)$, provided that $\supp( \tilde{\omg}(s,\cdot) ) \subset B_{\bar{r}}$ for all $s \in [0,t]$. 
\end{lemma}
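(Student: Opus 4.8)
The plan is to run the standard variational stability scheme, using Proposition~\ref{prop:key} as the coercivity input, the conservation laws of \eqref{eq:Euler} to preserve the defining constraints of $\calM_m$, and a rotational gauge-fixing together with a continuity argument to close the estimate. First I would record which constraints survive the evolution. Along the flow of \eqref{eq:Euler} the total mass $\int\omg$, the angular impulse $\int r^2\omg$, and the energy $E[\omg]$ are conserved, and $m$-fold symmetry is propagated (since \eqref{eq:Euler} commutes with $R_{2\pi/m}$ and solutions are unique). Consequently, if $\tilde\omg_0\in\calM_m$, then for every $t$ the solution $\tilde\omg(t)$ still satisfies the mass constraint, the impulse constraint $\int r^2(\tilde\omg(t)-\omg^{m,\bt})=0$, and is $m$-fold symmetric; the only membership condition of $\calM_m$ that may fail is the sine-moment normalization $\int r^m\sin(m\tht)\,\tilde\omg(t)=0$.

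Second, I would restore the sine-moment constraint by a rotation. Writing $(a,b):=\big(\int r^m\cos(m\tht)\omg,\ \int r^m\sin(m\tht)\omg\big)$, a direct computation shows that under $\omg\mapsto\omg_\alpha$ the vector $(a,b)$ is rotated by the angle $m\alpha$; moreover for the Kelvin wave one has $b=0$ and $a=a_0\neq0$ (to leading order $a_0\simeq\pi\bt$). Hence, as long as $\tilde\omg(t)$ stays $L^1$-close to the orbit $\{\omg^{m,\bt}_{t'}\}$, the modulus of $(a,b)$ stays bounded below by $\tfrac12 a_0$, and the angle $\alpha(t):=-\tfrac1m\arg\big(a(t),b(t)\big)\in\bbT_m$ is uniquely determined modulo $2\pi/m$ and depends continuously on $\tilde\omg(t)$; by $m$-fold symmetry $\tilde\omg(t)_{\alpha}$ depends only on $\alpha\bmod 2\pi/m$, so this is well-posed. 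Since rotation preserves the mass, impulse, energy, $m$-fold symmetry, and the ball $B_{\bar r}$, the rotated patch obeys $\tilde\omg(t)_{\alpha(t)}\in\calM_m$ with support in $B_{\bar r}$.

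Third I would feed this into the coercivity estimate. Once $\tilde\omg(t)_{\alpha(t)}\in\calN_{\varep,B_{\bar r}}\cap\calM_m(\omg^{m,\bt})$, Proposition~\ref{prop:key}, together with the rotation- and time-invariance of $E$, gives
\begin{equation*}
	C\,\nrm{\tilde\omg(t)_{\alpha(t)}-\omg^{m,\bt}}_{L^1}^2\le E[\omg^{m,\bt}]-E[\tilde\omg(t)_{\alpha(t)}]=E[\omg^{m,\bt}]-E[\tilde\omg_0].
\end{equation*}
The right-hand side is controlled by the initial $L^1$-smallness: from $E[\omg]-E[\omg']=\tfrac12\brk{\omg-\omg',G(\omg+\omg')}$ and the elementary bound $\nrm{G\mu}_{L^\infty(B_{\bar r})}\lesssim_{\bar r}\nrm{\mu}_{L^\infty}+\nrm{\mu}_{L^1}$ for $\mu$ supported in $B_{\bar r}$, the functional $E$ is Lipschitz in $L^1$ on indicators supported in $B_{\bar r}$, so $E[\omg^{m,\bt}]-E[\tilde\omg_0]\le C(\bar r)\,\dlt$. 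Thus $\nrm{\tilde\omg(t)_{\alpha(t)}-\omg^{m,\bt}}_{L^1}\le\sqrt{C(\bar r)\dlt/C}$.

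Finally I would close the loop with a continuity/bootstrap argument. Using the Yudovich theory, $t\mapsto\tilde\omg(t)$ is continuous in $L^1$, hence so are $t\mapsto\alpha(t)$ and $t\mapsto\tilde\omg(t)_{\alpha(t)}$. Define $T^\ast$ to be the supremum of times up to which $\nrm{\tilde\omg(s)_{\alpha(s)}-\omg^{m,\bt}}_{L^1}<\varep$ (so that Proposition~\ref{prop:key} and the gauge-fixing are licensed on $[0,T^\ast)$); choosing $\dlt$ so small that $\sqrt{C(\bar r)\dlt/C}<\varep/2$ prevents the norm from ever reaching $\varep$, whence $T^\ast$ coincides with the whole interval on which $\supp\tilde\omg(\cdot)\subset B_{\bar r}$. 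Setting $t'(t):=-\alpha(t)$ and using $\nrm{\tilde\omg(t)_{\alpha(t)}-\omg^{m,\bt}}_{L^1}=\nrm{\tilde\omg(t)-\omg^{m,\bt}_{-\alpha(t)}}_{L^1}$ then yields \eqref{eq:stab}. The main obstacle I anticipate is the circular coupling between the gauge-fixing and the bootstrap: the rotation $\alpha(t)$ is only well-defined while $\tilde\omg(t)$ is close to the orbit, and since the normalizing moment $a_0\simeq\pi\bt$ is small, one must take $\varep$ (and then $\dlt$) small relative to $\bt$ to keep $(a,b)$ bounded away from the origin --- this is precisely why the admissible smallness degenerates as $\bt\to0$.
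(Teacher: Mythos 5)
Your proposal is correct, and it shares the paper's variational core: both arguments use Proposition \ref{prop:key} as the coercivity input, the conservation of mass, angular impulse, energy and $m$-fold symmetry (so that only the sine-moment normalization in $\calM_m$ can fail along the flow), and the Lipschitz bound \eqref{eq:energy-bound} to convert initial $L^1$-smallness into energy-smallness. Where you genuinely differ is in how the rotational adjustment and the continuity argument are organized. The paper passes to a rotating frame and runs a discrete induction: Lipschitz-in-time continuity of $\nrm{\omg^{m,\bt}-\tilde{\omg}(t)}_{L^1}$ fixes a uniform step $T$, and at each time $kT$ a small corrective rotation $\tau$ is found by an implicit intermediate-value argument (the derivative $\frac{d}{d\tau}\int r^{m}\sin(m\tht-m\tau)\,\omg^{m,\bt}\,dx \simeq m\pi\bt>0$), after which the coercivity upgrades the bound from $\varep/2$ back to $\varep/4$ and the argument repeats on $[kT,(k+1)T]$. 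You instead construct a single continuous modulation, the explicit gauge $\alpha(t)=-\tfrac1m\arg\bigl(a(t)+ib(t)\bigr)$ built from the $m$-th moment vector, and close with a $T^\ast$-type continuity bootstrap. This modulation-theoretic packaging buys a continuously chosen $t'(t)$, needs only $L^1$-continuity of the flow in time (where the paper's uniform step requires Lipschitz continuity), and makes the mechanism of the rotational adjustment explicit rather than implicit. The cost, which you correctly identify, is the same degeneration the paper hides in its ``taking $\varep$ smaller if necessary'': the gauge is well-defined only while the moment vector stays away from zero, so $\varep$ (hence $\dlt$) must be small compared with $a_0\simeq\pi\bt$. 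Both routes are complete; yours is a legitimate and somewhat cleaner alternative to the paper's discrete iteration.
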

Note that for two vorticities $\omg$ and $\tilde{\omg}$ which are compactly supported in $\bbR^2$, we have \begin{equation}\label{eq:energy-bound}
	\begin{split}
		|E[\omg] - E[\tilde{\omg}]| = \frac12\left| \brk{\omg-\tilde{\omg} , G[\tilde{\omg}] } - \brk{ \omg, G[\tilde{\omg}-\omg] }  \right| \le C \nrm{\omg-\tilde{\omg}}_{L^1}
	\end{split}
\end{equation} where $C>0$ depends on the radius of the support (see \cite[Lemma 5.1]{Tang}). 

\begin{proof}[Proof of Lemma \ref{lem:stab} assuming Proposition \ref{prop:key}]
	This is nothing but \cite[Lemma 5.3]{Tang}, although we provide a simplified argument. Let us suppose that $\tilde{\omg}_{0}$ verifies the assumptions in the above. Furthermore, it will be convenient to consider the Euler equations in a rotating frame in which $\omg^{m,\bt}$ becomes a steady state, and denote $\tilde{\omg}(t,\cdot)$ to be the solution defined under this frame. Note that the solution $\tilde{\omg}(t,\cdot)$ belongs to the class $\calM_{m}$, possibly except for the condition \begin{equation}\label{eq:moment}
		\begin{split}
			\int r^{m}\sin(m\tht) \tilde{\omg}(t,\cdot) dx= 0.
		\end{split}
	\end{equation}

	\medskip
	
	\noindent \underline{Proof of \eqref{eq:stab} under the assumption of \eqref{eq:moment} and $\tilde{\omg}(t) \in \calN_{\varep,B_{\bar{r}}}$.} For the moment, assume that \eqref{eq:moment} holds at some time $t$ and $\tilde{\omg}(t) \in \calN_{\varep,B_{\bar{r}}}$. Then, from Proposition \ref{prop:key} and \eqref{eq:energy-bound}, we derive \begin{equation*}
	\begin{split}
	 \frac{1}{C}\nrm{\omg^{m,\bt} - \tilde{\omg}_0}_{L^{1}}\ge   E[\omg^{m,\bt}]- E[\tilde{\omg}_0]   =	E[\omg^{m,\bt}]- E[\tilde{\omg}(t)] \ge C \nrm{\omg^{m,\bt} - \tilde{\omg}(t)}_{L^{1}}^{2}
	\end{split}
\end{equation*} and hence \begin{equation*}
\begin{split}
	\nrm{\omg^{m,\bt} - \tilde{\omg}(t)}_{L^{1}} \le C\dlt^{\frac12} < \frac{\varep}{4},
\end{split}
\end{equation*} where the last inequality follows simply by taking $\dlt>0$ small in a way depending on $\varep$.

	\medskip
	
	\noindent \underline{Removing the additional assumptions.} We observe that the quantity $\nrm{\omg^{m,\bt} - \tilde{\omg}(t)}_{L^{1}}$ is \textit{Lipschitz continuous} in time, which follows from the fact that the boundary of the support of $\omg^{m,\bt}$ is smooth and that the velocity of $\tilde{\omg}(t)$ is uniformly bounded in time. Therefore, from the continuity,  one can take some small $T>0$ such that on $[0,T]$, we have \begin{equation}\label{eq:bootstrap}
		\begin{split}
			\nrm{\omg^{m,\bt} - \tilde{\omg}(t)}_{L^{1}} < \frac{\varep}{2}. 
		\end{split}
	\end{equation} Since the condition $\supp( \tilde{\omg}(s,\cdot) ) \subset B_{\bar{r}}$  is given in the statement of the lemma, we obtain on $[0,T]$ that  $\tilde{\omg}(t) \in \calN_{\varep,B_{\bar{r}}}$. Then, at $t = T$, it is not difficult to see that by rotating $\tilde{\omg}(T)$ with some small angle $\tau$ (taking $\varep$ smaller if necessary), we can arrange that \begin{equation*}
\begin{split}
				\int r^{m}\sin(m\tht) \tilde{\omg}_\tau (T,\cdot) dx = 0.
\end{split}
\end{equation*} See the last part of the proof of Lemma \ref{lem:reduction-ImFT} for the details of this argument. Recalling the argument in the above, this shows that we can actually upgrade the estimate \eqref{eq:bootstrap} to \begin{equation*}
\begin{split}
	\nrm{\omg^{m,\bt}_{-\tau} - \tilde{\omg}(T)}_{L^1} =\nrm{\omg^{m,\bt} - \tilde{\omg}_\tau(T)}_{L^1} < \frac{\varep}{4}.
\end{split}
\end{equation*} Since we may choose $T$ depending only on $\omg^{m,\bt}$ and $\varep$ (using \textit{Lipschitz continuity} in time of the quantity $\nrm{\omg^{m,\bt} - \tilde{\omg}(t)}_{L^{1}}$), we may inductively  obtain bounds of the $L^1$ difference on time intervals $[T,2T]$, $[2T, 3T]$, and so on. 
\end{proof}
\begin{proof}[Proof of Proposition \ref{thm:stable} from Lemma \ref{lem:stab}]
We first note that Lemma \ref{lem:stab} works  for general $r_0>0$ by rescaling. Then the idea for proof of Proposition \ref{thm:stable}    is to simply ``adjust'' both the initial data $\omega_0$ and 
  the Kelvin wave $\omega^{m,\beta}$   in a way that we are reduced to the setup of Lemma \ref{lem:stab}. 
Given $\omg_{0}$ satisfying the assumptions of Proposition \ref{thm:stable}, we  
 may find $\lmb, \bt', \tau$ verifying $$|\lmb-1|, |\bt'-\bt|, |\tau| \ll 1$$ (from the inverse function theorem) such that 
the rotated initial data $(\omega_0)_\tau(x)=\omega_0(R_{-\tau}x)$ and  
 the rescaled, $\beta$-reparametrized wave $\omega^{m,\beta',\lambda}(x):=\omega^{m,\beta'}(\lambda x)$
 satisfy 
$$\int r^{m}\sin(m\tht)(\omg_0)_\tau(x) dx =0,	$$ and 
 \begin{equation*}
\begin{split}
	\int {\omg}_0( x) dx = \int \omg^{m,\bt',\lambda}(x) dx, \qquad 	\int |x|^2{\omg}_0(x) dx = \int |x|^2 \omg^{m,\bt',\lambda} (x) dx,
\end{split}
\end{equation*} respectively,  by taking $\dlt>0$ smaller if necessary depending on $m, \bt$. 
 We observe that {if we set  $\tilde{\omg}_0:=(\omega_0)_\tau$, we have}
\begin{equation*}
\begin{split}
  \nrm{ \tilde\omg_0 - \omg^{m,\bt',\lambda} }_{L^1}& = \nrm{ \omg_0 - \omg^{m,\bt',\lambda}_{-\tau} }_{L^1}\\ &\leq
  \nrm{ \omg_0 - \omg^{m,\bt} }_{L^1}
+\nrm{ \omg^{m,\bt}-\omg^{m,\bt'} }_{L^1} 
+\nrm{ \omg^{m,\bt'}-\omg^{m,\bt',\lambda} }_{L^1}  
  +\nrm{\omg^{m,\bt',\lambda}- \omg^{m,\bt',\lambda}_{-\tau} }_{L^1},
\end{split}
\end{equation*} {and the right hand side can be made} arbitrarily small by assuming $\delta>0$ small again. Thus we get 
$\tilde{\omg}_{0} \in \calM_m \cap \calN_{\dlt,B_{\bar{r}}}(\omg^{m,\bt',\lambda})$ 
 so that we  can apply   Lemma \ref{lem:stab} (for general $r_0>0$) to {$\tilde{\omg}_{0}$} with the {Kelvin} wave $\omega^{m,\beta',\lambda}$, which gives \begin{equation*}
\begin{split}
	\nrm{ \tilde{\omg} (t) - \omg_{t'}^{m,\bt',\lambda} }_{L^1} < \frac{\varep}{3} 
\end{split}
\end{equation*}
  for some angle $t' = t'(t)$. 
 (For this, we may take $\bar{r}>0$ in the statement of Proposition \ref{thm:stable} slightly smaller than the original  $\bar r>0$ given in Proposition \ref{prop:key}.)   
    Then by choosing appropriate  angle  $t_1 = t_1(t')$,
   we have \begin{equation*}
\begin{split}
		\nrm{ {\omg} (t) - \omg^{m,\bt}_{t_1-\tau} }_{L^1} &= \nrm{ \tilde{\omg} (t) - \omg^{m,\bt}_{t_1} } _{L^1}\\ &\le 	\nrm{ \tilde {\omg} (t) -\omega^{m,\beta',\lambda}_{t'} }_{L^1} + 	\nrm{ \omega^{m,\beta',\lambda}_{t'} - \omega^{m,\beta'}_{t'}  }_{L^1} + 	\nrm{ \omega^{m,\beta'}_{t'}  - \omg^{m,\bt}_{t_1} }_{L^1}\\
		&\le \frac \varepsilon 3 + 	\nrm{ \omega^{m,\beta',\lambda} - \omega^{m,\beta'}  }_{L^1} + 	\nrm{ \omega^{m,\beta'}  - \omg^{m,\bt} }_{L^1}.  
\end{split}
\end{equation*} The last two  terms on the right hand side can be taken to be less than $\varep/3$ by choosing $\delta$ small. This finishes the proof. 
\end{proof}

\subsection{Reduction to graph perturbations}\label{subsec:redu}
We set $\calE$ to be a sufficiently small neighborhood of $\psi^{m,\bt}$ in the $C^1$--topology, where $\psi^{m,\bt}$ is the relative stream function of $\omg^{m,\bt}$ defined in \eqref{eq:psi-mbt}. Before we proceed, observe that the set $\{ \psi^{m,\bt} > 0 \}$ consists of two components, with the inner one describing the set $A^{m,\bt}$. Since the gradient of $\psi^{m,\bt}$ is non-degenerate on $\partial A^{m,\bt}$ for $\bt$ small, we have that for any $\psi \in \calE$, the inner component of $\{ \psi > 0 \}$ is an open set close to $A^{m,\bt}$. {Note that, as we take $\bt\to 0$, the relative stream functions $\psi^{m,\bt}$ converge in $C^{1,\alp}$ to the limit $\psi^{m,0}$: \begin{equation*}
	\begin{split}
		\lim_{\bt\to0} \psi^{m,\bt} = G [ \mathbf{1}_B] + \frac{1}{2}\Omg^{m,0} r^2 + C^{m,0} = - \frac12 \ln r + \frac12 \left( \frac12 - \frac1{2m} \right) (r^2 - 1). 
	\end{split}
\end{equation*} The function described on the right hand side is strictly positive on $0<r<1$, negative on $1<r<r^*$, and again positive on $r^*<r$, for a constant $r^*>1$ depending only on $m$. We conclude that, once we pick any $1 <\bar{r} < r^*$, then there exists $\bar{\bt} > 0$ such that for any $0< \bt < \bar{\bt}$, the relative stream function $\psi^{m,\bt}$ is strictly negative in the region $B_{\bar{r}} \backslash A^{m,\bt}$. When $m = 2, 3, 4$, we have that $r^* \simeq 1.87, 1.46, 1.32$, respectively. }

\begin{lemma}[The reduction lemma]\label{lem:reduction-ImFT}
	Given $\omg_{1} = \mathbf{1}_{A_1} \in \calN_{\varep,B_{\bar{r}}} \cap \calM_{m}(\omg^{m,\bt})$, there exists a $C^1$--smooth $\tilde{\psi}$ close to $\psi^{m,\bt}$ defined in \eqref{eq:psi-mbt} such that if we denote the {inner} 
	component of $\{ \tilde{\psi} \ge 0 \}$ by $\tilde{A}$, then \begin{equation*}
		\begin{split}
			\tilde{\omg} := \mathbf{1}_{\tilde{A}} \in \calM_{m}(\omg^{m,\bt}){\quad\mbox{and}\quad \brk{\tilde{\omega}-\omega_1, G\omega_1-\tilde \psi}=0.}
		\end{split}
	\end{equation*}
\end{lemma}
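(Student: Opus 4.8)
The plan is to build $\tilde\psi$ as an explicit three-parameter modification of the stream function $G\omega_1$ and then to fix the three parameters, via the implicit function theorem, so that the three moment constraints defining $\calM_m$ hold; the orthogonality relation will then come for free. Concretely, I would set, for $(a,b,c)$ ranging near $(\Omg^{m,\bt},C,0)$,
\begin{equation*}
\tilde\psi_{a,b,c} := G\omega_1 + \tfrac12 a\, r^2 + b + c\, r^{m}\sin(m\tht),
\end{equation*}
take $\tilde A=\tilde A_{a,b,c}$ to be the inner component of $\{\tilde\psi_{a,b,c}\ge 0\}$, and put $\tilde\omega=\mathbf 1_{\tilde A}$. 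Each summand $r^2$, $1$, $r^m\sin(m\tht)$ is $m$-fold symmetric and $G$ commutes with rotations, so $\tilde\psi_{a,b,c}$ and hence $\tilde\omega$ are automatically $m$-fold symmetric. The point of this ansatz is that $G\omega_1-\tilde\psi_{a,b,c}=-(\tfrac12 a r^2+b+c\,r^m\sin(m\tht))$ lies in $\mathrm{span}\{1,r^2,r^m\sin(m\tht)\}$; thus \emph{if} $\tilde\omega\in\calM_m$, then $\tilde\omega$ and $\omega_1$ share all three moments $\int\omega$, $\int r^2\omega$, $\int r^m\sin(m\tht)\omega$, and the pairing $\brk{\tilde\omega-\omega_1,\,G\omega_1-\tilde\psi_{a,b,c}}$ vanishes term by term. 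Hence the entire task reduces to choosing $(a,b,c)$ so that $\tilde\omega\in\calM_m$.

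I would therefore define the constraint map $\Phi(a,b,c):=\big(\int r^m\sin(m\tht)\tilde\omega,\ \int r^2(\tilde\omega-\omega^{m,\bt}),\ \int(\tilde\omega-\omega^{m,\bt})\big)$ and solve $\Phi=0$. At the base point $\omega_1=\omega^{m,\bt}$, $(a,b,c)=(\Omg^{m,\bt},C,0)$ one has $\tilde\psi=\psi^{m,\bt}$, $\tilde A=A^{m,\bt}$, $\tilde\omega=\omega^{m,\bt}$, and $\Phi=0$ (its first entry vanishing by the $\tht$-parity of $\omega^{m,\bt}$). The dependence on $\omega_1$ is only through $G\omega_1$, and the standard potential estimate — for differences of indicators supported in $B_{\bar r}$, $\nrm{\nabla G(\omega_1-\omega^{m,\bt})}_{L^\infty}\lesssim\nrm{\omega_1-\omega^{m,\bt}}_{L^1}^{1/2}$, together with an analogous $C^0$ bound — gives $G\omega_1\to G\omega^{m,\bt}$ in $C^1$ as $\varep\to0$. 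Since $\nabla\psi^{m,\bt}\ne0$ on $\partial A^{m,\bt}$ and $\psi^{m,\bt}<0$ on $B_{\bar r}\setminus A^{m,\bt}$, for small $\varep$ the level set $\{\tilde\psi_{a,b,c}=0\}$ has a well-defined inner branch which is a $C^1$ graph $r=\rho_{a,b,c}(\tht)$ close to $\partial A^{m,\bt}$; writing each moment as $\int_0^{2\pi}\!\int_0^{\rho}(\cdots)\,r\,dr\,d\tht$ and using the boundary-variation identity $\partial_s\int f\,\mathbf 1_{\{\tilde\psi+sh\ge0\}}=\int_{\partial\tilde A}\frac{fh}{|\nabla\tilde\psi|}\,dS$ shows that $\Phi$ is $C^1$ in $(a,b,c)$ and continuous in the parameter $G\omega_1$.

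The crux is the invertibility of $D_{(a,b,c)}\Phi$ at the base point. Setting $w:=dS/|\nabla\psi^{m,\bt}|$ on $\partial A^{m,\bt}$, its entries are $J_{ij}=\int_{\partial A^{m,\bt}}f_i h_j\,w$ with $(f_1,f_2,f_3)=(r^m\sin(m\tht),r^2,1)$ and $(h_1,h_2,h_3)=(\tfrac12 r^2,1,r^m\sin(m\tht))$. By $\tht$-parity of the base domain one gets $J_{11}=J_{12}=J_{23}=J_{33}=0$, so the determinant collapses to $\det D_{(a,b,c)}\Phi=J_{13}\,(J_{21}J_{32}-J_{22}J_{31})$, where $J_{13}=\int r^{2m}\sin^2(m\tht)\,w>0$ and $J_{21}J_{32}-J_{22}J_{31}=\tfrac12\big(\int r^4 w\int w-(\int r^2 w)^2\big)$. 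I expect this last $2\times2$ minor to be the main obstacle: it is a Cauchy–Schwarz gap that \emph{vanishes in the limit} $\bt\to0$, when $\partial A^{m,\bt}$ collapses to the unit circle and $r^2$ becomes constant on it. The resolution is that for each fixed $\bt>0$ the Kelvin boundary $r=1+\bt\cos(m\tht)+o(\bt)$ is genuinely non-circular, so $r^2$ is non-constant on $\partial A^{m,\bt}$, the Cauchy–Schwarz inequality is strict, and the minor is $\simeq\bt^2>0$. Hence $D_{(a,b,c)}\Phi$ is invertible for each small fixed $\bt$, and the implicit function theorem (with $G\omega_1$ as a $C^1$-parameter) produces a unique $(a,b,c)$ near $(\Omg^{m,\bt},C,0)$ solving $\Phi=0$, provided $\varep$ is taken small \emph{depending on $\bt$}, exactly as the statement permits. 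The resulting $\tilde\omega$ is then $m$-fold symmetric, lies in $\calM_m$, and satisfies the orthogonality by the first paragraph. The only genuinely delicate point is this $\bt$-degeneracy of the constraint Jacobian, which both forces the admissible perturbation size to shrink with $\bt$ and explains why the argument must exploit the non-circularity of the Kelvin wave.
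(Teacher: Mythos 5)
Your proposal is correct, and its skeleton --- modify the stream function of $\omg_1$ by a finite-dimensional family of multipliers, take $\tilde A$ to be the inner component of the zero super-level set, and fix the parameters by the inverse function theorem so that the moment constraints of $\calM_m(\omg^{m,\bt})$ hold, with the orthogonality relation coming from the span structure --- is exactly the paper's. The genuine difference is in how the constraint $\int r^m\sin(m\tht)\,\tilde\omg\,dx=0$ is enforced: the paper uses only the two multipliers $\tfrac12\mu_1 r^2+\mu_2$, solves the mass and impulse constraints by a $2\times 2$ inverse function theorem, and then kills the $\sin$-moment by \emph{rotating} $\tilde\psi$ about the origin, via an intermediate-value argument based on $\tfrac{d}{d\tau}\int r^m\sin(m\tht)\,\omg^{m,\bt}_\tau\,dx\big|_{\tau=0}>0$; you instead add the third multiplier $c\,r^m\sin(m\tht)$ and solve all three constraints in a single $3\times 3$ inverse function theorem. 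Your route buys a cleaner orthogonality statement: since $G\omg_1-\tilde\psi$ lies in $\mathrm{span}\{1,r^2,r^m\sin(m\tht)\}$ and both $\tilde\omg,\omg_1$ have matched moments, the pairing vanishes identically, whereas in the paper's route the rotated $G\omg_1-\tilde\psi_\tau$ is no longer in $\mathrm{span}\{1,r^2\}$ (it picks up the term $G[\omg_1-(\omg_1)_{-\tau}]$), so the orthogonality must really be read relative to the rotated data $(\omg_1)_\tau$, using rotation invariance of $E$ and of $L^1$ downstream --- a bookkeeping point the paper leaves implicit and your construction avoids entirely. The cost is a $3\times 3$ Jacobian, but, as you observe, the $\tht\mapsto-\tht$ parity of $A^{m,\bt}$ collapses it to $J_{13}$ times the same $2\times 2$ minor the paper computes; your Cauchy--Schwarz reading of that minor, with strictness of order $\bt^2$ coming precisely from the non-circularity of the Kelvin boundary, matches the paper's explicit expansion $\det(\nb F)=(2\pi m r_0^2)^2\bigl(\tfrac52\bt^2/r_0^2+o(\bt^2)\bigr)$, and your extra diagonal factor $J_{13}=\int r^{2m}\sin^2(m\tht)\,w>0$ is order one and harmless. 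In particular, your diagnosis that the $\bt\to0$ degeneracy of the constraint Jacobian is the crux, forcing $\varep$ to shrink with $\bt$, is exactly the mechanism present (though less explicitly discussed) in the paper's proof.
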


\begin{proof} Given {$\psi\in \calE$,} we define 
	\begin{equation*}
		\begin{split}
			\psi_{\mu} := \psi + \frac12 \mu_1 r^2 +  \mu_2. 
		\end{split}
	\end{equation*} Given $\mu = (\mu_1, \mu_2)$ which is close to $\mathbf{0} = (0,0)$, we may set $A_{\psi,\mu}$ to be the inner component of $\{ \psi_{\mu} \ge 0 \}$ and $\omg_{\psi,\mu} = \mathbf{1}_{A_{\psi,\mu}}$. Define for a small neighborhood $\calO\subset \bbR^2$ of the origin, \begin{equation*}
	\begin{split}
		F = (F_1,F_2) : \calE \times \calO \to \bbR^2
	\end{split}
\end{equation*} by \begin{equation*}
\begin{split}
	F_{1}(\psi,\mu) := \int r^{2} (\omg_{\psi,\mu} - \omg^{m,\bt}) dx , \qquad 	F_{2}(\psi,\mu) := \int (\omg_{\psi,\mu} - \omg^{m,\bt}) dx . 
\end{split}
\end{equation*} Assuming that $\calO$ is smaller if necessary, we have that $\partial A_{\psi,\mu}$ is described by a graph $h = h_{\psi,\mu}: S^{1}\to\bbR$ with \begin{equation*}
\begin{split}
	\psi_{\mu} ( r_{0} + h(\eta) , \eta ) = 0.
\end{split}
\end{equation*} In particular, we have that $h_{\psi^{m,\bt},0} \equiv 0$. Based on this, we compute \begin{equation*}
	\begin{split}
		\rd_{h} \psi_{\mu}|_{ (\psi,\mu)  = (\psi^{m,\bt},0) } & = \rd_{\xi} \psi^{m,\bt}|_{\xi = r_{0}} = \rd_{r}\psi^{m,\bt} (r_{0} + g^{m,\bt}(\tht),\tht)   \\
		& = -\frac{1}{2m}(r_{0} + \bt\cos(m\tht)) + \rd_{r}G ( \omg^{m,\bt} - \mathbf{1}_{B} ) (r_{0}+g^{m,\bt}(\tht),\tht) \\
		& = -\frac{1}{2m}(r_{0} + \bt\cos(m\tht)) - \frac12 \bt\cos(m\tht)  + o(\bt). 
	\end{split}
\end{equation*}  
Then, from \begin{equation*}
	\begin{split}
		\rd_{\mu_{j}} h = \frac{ \rd_{\mu_{j}} \psi_{\mu}  }{ \rd_{h}\psi_{\mu} }
	\end{split}
\end{equation*} we obtain that  \begin{equation*}\label{eq:h-der}
\begin{split}
	\rd_{\mu_{1}} h = \frac{r^2}{2\rd_h\psi},   \quad \rd_{\mu_{2}} h = \frac{1}{\rd_h\psi}.
\end{split}
\end{equation*}
This allows us to compute $\rd_\mu F$ at $(\psi^{m,\bt},\bf0)$. For convenience we introduce the notation $f = o^\perp(A)$ to mean that the function $f$ satisfies $|f| \ll A$ and $f$ is orthogonal in $L^2(S^1)$ with $1$ and $\cos(m\tht)$.  To begin with,  \begin{equation*}
	\begin{split}
		\rd_{\mu_{1}} F_{1} & = \int_{0}^{2\pi} (r^{2} J)|_{\xi=r_{0}}  \rd_{\mu_{1}} h d\eta = \int_{0}^{2\pi} \frac{(r_{0}+ \bt\cos(m\tht))^4 (r_{0} + \bt\cos(m\tht) + o^\perp(\bt))}{-\frac{1}{m}r_0 -( 1 + \frac{1}{m}) \bt\cos(m\tht) + o^\perp(\bt)}  d\tht \\
		& = -m \int_{0}^{2\pi} (r_0^4 + 4r_0^3 \bt\cos(m\tht) + 6 r_0^2\bt^2 \cos^2(m\tht) + o(\bt^2) )(1 + \frac{\bt}{r_0} \cos(m\tht)+o^\perp(\bt) ) \\
		& \qquad\qquad  \times (1 - (1+m)\frac{\bt}{r_{0}} \cos(m\tht) + (m+1)^2 \frac{\bt^2}{r_0^2} \cos^2(m\tht) + o(\bt^2) ) d\tht . 
	\end{split}
\end{equation*} Then, this gives \begin{equation*}
\begin{split}
		\rd_{\mu_{1}} F_{1} &= -mr_0^4 \int_0^{2\pi} 1 + \left( 4 - 5(1+m) + 6 + (1+m)^2 \right) \frac{\bt^2}{r_0^2} \cos^2(m\tht) d\tht + o(\bt^2) \\
	& = -mr_0^4 \left( 2\pi + (m(m-3) + 6 )\frac{\bt^2}{r_0^2} \pi \right) + o(\bt^2). 
\end{split}
\end{equation*} Next, one can similarly compute that \begin{equation*}
\begin{split}
		\rd_{\mu_{2}} F_{1} & = \int_{0}^{2\pi} (r^{2}J)|_{\xi=r_{0}}  \rd_{\mu_{2}} h d\eta = -mr_0^2 \left( 2\pi + (1-m+m^2)\frac{\bt^2}{r_0^2} \pi \right) + o(\bt^2),
\end{split}
\end{equation*}  \begin{equation*}
\begin{split}
	\rd_{\mu_{1}} F_{2} & = \int_{0}^{2\pi} J|_{\xi=r_{0}}  \rd_{\mu_{1}} h d\eta = -mr_0^2 \left( 2\pi + (1-m+m^2)\frac{\bt^2}{r_0^2} \pi \right) + o(\bt^2),
\end{split}
\end{equation*}  and  \begin{equation*}
\begin{split}
	\rd_{\mu_{2}} F_{2} & = \int_{0}^{2\pi} J|_{\xi=r_{0}}  \rd_{\mu_{2}} h d\eta = -m \left( 2\pi + (1+m+m^2)\frac{\bt^2}{r_0^2} \pi \right) + o(\bt^2).
\end{split}
\end{equation*} Therefore, we conclude that \begin{equation*}
\begin{split}
	\det (\nb F) =  (2\pi m r_0^2)^2 \left(  \frac{5}{2} \frac{\bt^2}{r_0^2} + o(\bt^2) \right). 
\end{split}
\end{equation*} In particular, there exists some $\bt_0>0$ so that for $\bt \in (0,\bt_0)$, $\det (\nb F)>0$. Fixing such a $\bt$ and applying the inverse function theorem to the map $F$ at $(\psi,\mu) = (\psi^{m,\bt}, \bf0)$, we obtain existence of a unique 
{$$ \tilde{\psi}:=(G\omega_1)_\mu=G\omega_1+ \frac12 \mu_1 r^2 +  \mu_2.  $$}
close to $\psi^{m,\bt}$ such that the corresponding vorticity $\tilde{\omg}$ satisfies {$F(G\omega_1,\mu) = 0$,} namely \begin{equation}\label{eq:first-two}
\begin{split}
	\int r^2 (\tilde{\omg} - \omg^{m,\bt}) dx = 0, \qquad \int (\tilde{\omg} - \omg^{m,\bt}) dx = 0. 
\end{split}
\end{equation} It is clear that $\tilde{\psi}$ (and therefore $\tilde{\omg}$) is $m$-fold rotationally symmetric. For $\tilde{\omg}$ to belong to the class $\calM_{m}$, it still remains to verify the condition \begin{equation*}
\begin{split}
	\int r^{m} \sin(m\tht) \tilde{\omg} dx = 0. 
\end{split}
\end{equation*} This is done by rotating $\tilde{\psi}$ around the origin; that is, define $\tilde{\psi}_{\tau}(r,\tht) :=  \tilde{\psi}(r, \tht+\tau)$ in polar coordinates and denote the corresponding vorticity (defined as the characteristic set of the inner component of $\{ \tilde{\psi}^\tau\ge0\}$) by $\tilde{\omg}^{\tau}$. Observe that \begin{equation*}
\begin{split}
		\int r^{m} \sin(m\tht) {\omg^{m,\bt}} dx = 0
\end{split}
\end{equation*} and since $\tilde{\psi}$ is close to $\psi^{m,\bt}$ in the $C^1$ topology, we have \begin{equation*}
\begin{split}
	\left| 	\int r^{m} \sin(m\tht) \tilde{\omg} dx  \right| \ll 1, \qquad \left| \frac{d}{d\tau} \left( \int r^{m} \sin(m\tht) \tilde{\omg}_{\tau} dx  - \int r^{m} \sin(m\tht) \omg^{m,\bt}_{\tau} dx   \right) \right| \ll 1.
\end{split}
\end{equation*} Here, $\ll 1$ means that the constant can be arbitrarily small by taking $\varep\to 0$ where $\varep$ is from $\calN_{\varep,B_{\bar{r}}}$. Since \begin{equation*}
\begin{split}
	\frac{d}{d\tau}\int r^{m} \sin(m\tht) \omg^{m,\bt}_{\tau} dx    = \frac{d}{d\tau} \int r^{m} \sin(m\tht-m\tau) \omg^{m,\bt} dx    = \int r^{m} m\cos(m\tht-m\tau) \omg^{m,\bt} dx   
\end{split}
\end{equation*} is strictly positive at $\tau = 0$, we can find some $\tau$ satisfying $|\tau|\ll1$ such that \begin{equation*}
\begin{split}
		\int r^{m} \sin(m\tht) \tilde{\omg}_{\tau} dx = 0. 
\end{split}
\end{equation*} Observe that rotating around the origin does not alter \eqref{eq:first-two}. The proof is complete. 
\end{proof}

\begin{lemma}\label{lem:energy-compare1} Given $\omg_{1}$ satisfying the assumptions of Lemma \ref{lem:reduction-ImFT}, let $\tilde{\omg}$ to be the associated graph-type vorticity
{from Lemma \ref{lem:reduction-ImFT}.} Then, we have 
	\begin{equation*}\label{eq:energy-diff-pert}
		\begin{split}
			E[\tilde{\omg}] - E[\omg_{1}] \ge C\nrm{\tilde{\omg} - \omg_{1}}_{L^{1}}^{2} . 
		\end{split}
	\end{equation*}
\end{lemma}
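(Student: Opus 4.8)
The plan is to combine the convexity of the energy with the variational characterization of $\tilde{\omg}$ as a nodal set of its own relative stream function. Writing $\sgm := \tilde{\omg}-\omg_1$ and using that $G$ is self-adjoint, I first expand
\begin{equation*}
	E[\tilde{\omg}]-E[\omg_1] = \brk{\sgm, G\omg_1} + \tfrac12\brk{\sgm, G\sgm}.
\end{equation*}
Since $\tilde{\omg},\omg_1\in\calM_m(\omg^{m,\bt})$, the difference $\sgm$ has vanishing total mass and is compactly supported, so integrating by parts gives $\brk{\sgm, G\sgm}=\nrm{\nb G\sgm}_{L^2}^2\ge0$. Thus it suffices to bound the linear term $\brk{\sgm, G\omg_1}$ below by $C\nrm{\sgm}_{L^1}^2$.

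Next I would convert the linear term into one involving the relative stream function $\tilde\psi$. By the orthogonality relation $\brk{\tilde{\omg}-\omg_1, G\omg_1-\tilde\psi}=0$ supplied by Lemma \ref{lem:reduction-ImFT} (equivalently, because $G\omg_1-\tilde\psi$ is an affine combination of $1$ and $r^2$, to both of which $\sgm$ is orthogonal by the moment constraints defining $\calM_m$), we have $\brk{\sgm, G\omg_1}=\brk{\sgm,\tilde\psi}$. I then exploit the sign of $\tilde\psi$: by construction $\tilde A$ is the inner component of $\{\tilde\psi\ge0\}$, so $\tilde\psi\ge0$ on $\tilde A$; and the choice $\bar r<r^*$ makes $\psi^{m,\bt}$, hence the $C^1$-close $\tilde\psi$, strictly negative on $B_{\bar r}\setminus\tilde A$, so $\tilde\psi\le0$ on $A_1\setminus\tilde A\subset B_{\bar r}\setminus\tilde A$. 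Consequently
\begin{equation*}
	\brk{\sgm,\tilde\psi} = \int_{\tilde A\setminus A_1}\tilde\psi\,dx - \int_{A_1\setminus\tilde A}\tilde\psi\,dx = \int_{\tilde A\triangle A_1}|\tilde\psi|\,dx \ge 0.
\end{equation*}

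The main obstacle is upgrading this nonnegativity to the \emph{quadratic} lower bound $\int_{\tilde A\triangle A_1}|\tilde\psi|\,dx\ge C\nrm{\sgm}_{L^1}^2$. Here I would use that $\nb\tilde\psi$ is non-degenerate on $\partial\tilde A$ (inherited from $\psi^{m,\bt}$ for $\bt$ small), which yields a pointwise minorant $|\tilde\psi(x)|\ge c_0\min(\dist(x,\partial\tilde A),\rho_0)$ on $B_{\bar r}$ for fixed $c_0,\rho_0>0$. Setting $S:=\tilde A\triangle A_1$ and $V:=|S|=\nrm{\sgm}_{L^1}$, the quantity $\int_S \min(\dist(x,\partial\tilde A),\rho_0)\,dx$ is minimized, among all $S\subset B_{\bar r}$ of measure $V$, by a sublevel set (a tubular neighborhood of $\partial\tilde A$), via the bathtub principle. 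Since $\partial\tilde A$ is a smooth closed curve of length $P\simeq\mathrm{perim}(A^{m,\bt})$, the tube of half-width $\tau$ has measure $\simeq 2\tau P$; matching $2\tau P=V$ (possible with $\tau<\rho_0$ once $\varep$, and hence $V$, is small) and integrating the distance across the tube gives $\int_S\min(\dist,\rho_0)\,dx\gtrsim \tau^2 P\simeq V^2/P$.

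Combining the three steps yields $E[\tilde{\omg}]-E[\omg_1]\ge\brk{\sgm,\tilde\psi}\gtrsim V^2/P = C\nrm{\sgm}_{L^1}^2$ with $C\simeq c_0/P$ depending only on $m,\bt$, as claimed. The two delicate points are the quantitative minorant for $\tilde\psi$ near its (merely $C^{1,\alp}$) zero level set and the rearrangement estimate; both rely on the non-degeneracy of $\nb\psi^{m,\bt}$ for small $\bt$ and on the condition $\bar r<r^*$ that controls the sign of $\tilde\psi$ throughout $B_{\bar r}$.
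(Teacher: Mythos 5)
Your proof is correct and follows essentially the same route as the paper: the expansion of the energy difference, dropping the nonnegative quadratic term $\frac12\brk{\sgm,G\sgm}$, using the orthogonality $\brk{\sgm, G\omg_1-\tilde\psi}=0$ from Lemma \ref{lem:reduction-ImFT}, the sign of $\tilde\psi$ on $A_1\setminus\tilde A$ via the support condition $A_1\subset B_{\bar r}$, and the quadratic lower bound from non-degeneracy of $\nb\tilde\psi$ near $\partial\tilde A$. The only difference is cosmetic: you keep both halves of the symmetric difference and spell out, via the bathtub/tubular-neighborhood argument, the final quantitative step that the paper dismisses as ``not difficult to show.''
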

\begin{proof}
	Using the formula for the energy difference, we proceed as follows: \begin{equation*}
		\begin{split}
				E[\tilde{\omg}] - E[\omg_{1}] &= \brk{ \tilde{\omg} - \omg_{1}, G\omg_{1} } + \frac12 \brk{   \tilde{\omg} - \omg_{1}, G( \tilde{\omg} - \omg_{1}) }   \ge  \brk{ \tilde{\omg} - \omg_{1}, G\omg_{1} } \\
				&{=}  \brk{ \tilde{\omg} - \omg_{1}, \tilde{\psi} } = \int_{ \tilde{A} \backslash A_{1} } \tilde{\psi} -  \int_{ A_{1} \backslash \tilde{A} } \tilde{\psi} \ge  \int_{ \tilde{A} \backslash A_{1} } \tilde{\psi} .
		\end{split}
	\end{equation*} It is important to note that the assumption $\supp(\omg_1)\subset B_{\bar{r}}$ is used to guarantee that $\tilde{\psi}\le 0$ on $A_1\backslash\tilde{A}$. 
From a uniform lower bound for $\rd_r \tilde{\psi}$ near $\partial\tilde{A}$, it is not difficult to show that the last expression has a lower bound of the form $ C\nrm{\tilde{\omg} - \omg_{1}}_{L^{1}}^{2} $, since $\tilde{\psi}=0$ on $\partial\tilde{A}$ and $|\tilde{A} \backslash A_{1}| \gtrsim \nrm{\tilde{\omg}-\omg_{1}}_{L^1}$. 
\end{proof}

\subsection{Spectral analysis}\label{subsec:spec}

In this section, we shall consider \textit{graph-type} perturbations of $\omg^{m,\bt}$. For this purpose, it will be convenient to work on the coordinate system $(\xi,\eta)$ adapted to $\omg^{m,\bt}$, after fixing some $(m,\bt)$ with $m\ge2$ and $\bt>0$ sufficiently small in a way depending on $m$. Furthermore, $S^{1}$ will denote the set $\{ (\xi, \eta) : \xi = r_{0}, 0 \le \eta <2\pi \}$ in $\bbR^{2}$, unless otherwise specified. Let $h \in C^{1}(S^{1})$ be a function with sufficiently small $C^{1}$--norm in the $\eta$ variable. In this section, let us use the notation \begin{equation*}
	\begin{split}
		\omg_{h} := \mathbf{1}_{A_{h}}, \qquad A_{h} := \left\{ (\xi,\eta):  \xi \le r_{0} + h(\eta) \right\}.
	\end{split}
\end{equation*} For $h$ sufficiently small in $C^{1}$, the closed set $A_{h}$ is well-defined and close to the set $A^{m,\bt}$. In this notation, note that we have $\omg^{m,\bt} = \omg_{\bf0}$ where $\bf0$ is the zero function on $S^1$. 

Now observe that $\omg_{h}$ is $m$-fold symmetric in $\bbR^{2}$ if and only if $h$ is $m$-fold symmetric in the sense that $h(\eta) = h(\eta+ \frac{2\pi}{m})$ for any $\eta\in S^{1}$. For such a function $h$, we have the following simple result. 
\begin{lemma}\label{lem:m-symm}
	Let $h$ be $m$-fold symmetric on $S^{1}$. Then for any integer $0< n<m$, we have that \begin{equation*}
		\begin{split}
			\int_{S^1} e^{in\eta} h(\eta)d\eta=0. 
		\end{split}
	\end{equation*}
\end{lemma}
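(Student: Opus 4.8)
The plan is to exploit the $m$-fold symmetry of $h$ by translating the integration variable, which produces a multiplicative factor $e^{2\pi i n/m}$ in front of the same integral; since this factor cannot equal $1$ when $0<n<m$, the integral must vanish.

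Concretely, I would set
\begin{equation*}
	I_n := \int_{S^1} e^{in\eta} h(\eta)\, d\eta
\end{equation*}
and perform the change of variables $\eta \mapsto \eta + \frac{2\pi}{m}$. Because the integrand $e^{in\eta}h(\eta)$ is $2\pi$-periodic, integrating it over a full period $S^1$ gives the same value after any such shift; hence
\begin{equation*}
	I_n = \int_{S^1} e^{in(\eta + \frac{2\pi}{m})}\, h\!\left(\eta + \tfrac{2\pi}{m}\right) d\eta = e^{\frac{2\pi i n}{m}} \int_{S^1} e^{in\eta}\, h\!\left(\eta + \tfrac{2\pi}{m}\right) d\eta.
\end{equation*}
The $m$-fold symmetry hypothesis $h(\eta + \frac{2\pi}{m}) = h(\eta)$ then turns the remaining integral back into $I_n$, so that $I_n = e^{\frac{2\pi i n}{m}} I_n$, i.e. $\left(1 - e^{\frac{2\pi i n}{m}}\right) I_n = 0$.

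To finish, I would observe that the constraint $0 < n < m$ forces the exponent $\frac{2\pi n}{m}$ into the open interval $(0, 2\pi)$, so $e^{\frac{2\pi i n}{m}} \neq 1$ and the prefactor $1 - e^{\frac{2\pi i n}{m}}$ is nonzero; dividing through yields $I_n = 0$, as claimed. There is essentially no real obstacle in this argument; the only point requiring a word of care is the invariance of the integral over $S^1$ under the shift, which is immediate from the $2\pi$-periodicity of the full integrand.
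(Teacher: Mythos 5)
Your proof is correct and is essentially identical to the paper's: both perform the change of variables $\eta \mapsto \eta + \frac{2\pi}{m}$, invoke $2\pi$-periodicity and the $m$-fold symmetry of $h$ to obtain $(1-e^{2\pi i n/m})I_n = 0$, and conclude from $0<n<m$ that the prefactor is nonzero. No gaps; nothing further to add.
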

\begin{proof}
	With the change of variables $\eta\to \eta+\frac{2\pi}{m}$, \begin{equation*}
		\begin{split}
				\int_{S^1} e^{in\eta} h(\eta)d\eta=	\int_{S^1} e^{in\eta+ 2\pi i \frac{n}{m}} h(\eta+\frac{2\pi}{m})d\eta=	\int_{S^1} e^{in\eta+ 2\pi i \frac{n}{m}} h(\eta)d\eta.
		\end{split}
	\end{equation*} This gives \begin{equation*}
	\begin{split}
		(1-e^{2\pi i\frac{n}{m}})\int_{S^1} e^{in\eta} h(\eta)d\eta=0. 
	\end{split}
\end{equation*}When $0< n<m$, we have that $e^{2\pi i\frac{n}{m}}\ne 1$ and we are done.
\end{proof}

The following result from Tang \cite{Tang} gives the expansion of the energy for graph-type perturbations. Since it applies to general rotating solutions of Euler, the Lemma is directly applicable in our case. 
\begin{lemma}[{{\cite[Lemma 4.1]{Tang}}}] \label{lem:energy-expansion}
	Let $\omg^* = \mathbf{1}_{A^*}$ be a rotating patch solution where $\partial A^*$ is described by a smooth graph $g^*$.
{Let $\psi^*$	be the relative stream \eqref{eq:psi-mbt} of the rotating patch $\omg^*$.}
	 Furthermore, let $(\xi,\eta)$ be a coordinate system defined near $\partial A^*$ satisfying $\eta=\tht$ and $\{ \xi=r_{0} \} = \partial A^*$, and $J_{0}$ is the Jacobian of $x\mapsto (\xi,\eta)$ restricted to $S^{1} := \{ \xi=r_{0} \}$.  Consider $C^{1}$ graph-type perturbations of $\omg^*$, namely $\omg_{h}$ satisfying $\nrm{h}_{C^{1}(S^1)} \ll 1$. Furthermore, assume that we have \begin{equation}\label{eq:second-order}
		\begin{split}
			\int (\omg_h - \omg^*) \, dx = 0, \quad 	\int x(\omg_h - \omg^*) \, dx = 0, \quad 	\int |x|^2(\omg_h - \omg^*) \, dx = 0. 
		\end{split}
	\end{equation} Then, we have that for $q(\eta) := J_{0}(\eta) h(\eta)$, \begin{equation}\label{eq:energy-expansion}
		\begin{split}
			E[\omg_h] - E[\omg^*] = \frac12 \brk{q, \calL q} + o(\nrm{h}_{L^2}^2), 
		\end{split}
	\end{equation} where  \begin{equation*}\label{eq:L-def}
\begin{split}
	\calL q := I_{0} q + \int_{S^1} K(\eta,\eta') q(\eta') d\eta' 
\end{split}
\end{equation*} with \begin{equation*}\label{eq:I-K-def}
\begin{split}
	I_{0} := \frac{ \rd_{\xi} {\psi^*}|_{\xi=r_0} }{ J_{0}} , \qquad K(\eta,\eta') := \frac{1}{2\pi} \ln \frac{1}{| x(r_0,\eta) - x(r_0,\eta') |} .
\end{split}
\end{equation*} Here, $\brk{,}$ denotes the $L^2$ inner product on $S^{1}$. 
\end{lemma}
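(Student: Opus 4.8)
The plan is to avoid expanding $E$ itself and instead exploit that it is an exact quadratic form. Writing $\sigma := \omg_h - \omg^*$, one has the identity
\[
  E[\omg_h] - E[\omg^*] = \brk{\sigma, G\omg^*} + \tfrac12 \brk{\sigma, G\sigma},
\]
with no error term. The roadmap is then to show that the linear term $\brk{\sigma,G\omg^*}$ produces the local part $\tfrac12\brk{q,I_0q}$ of $\tfrac12\brk{q,\calL q}$, while the quadratic term $\tfrac12\brk{\sigma,G\sigma}$ produces the nonlocal part $\tfrac12\brk{q,\calK q}$ with $\calK q := \int_{S^1}K(\cdot,\eta')q(\eta')\,d\eta'$; the constraints \eqref{eq:second-order} enter only to linearize the first term cleanly.

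First I would treat the linear term. By the definition \eqref{eq:psi-mbt}, $G\omg^* = \psi^* - \tfrac12\Omg^* r^2 - C$ (for a patch rotating about the origin; a general center would add a term $b\cdot x$). The moment constraints $\int\sigma\,dx=0$ and $\int|x|^2\sigma\,dx=0$ — together with $\int x\,\sigma\,dx=0$ in the general case — annihilate every summand except $\psi^*$, so $\brk{\sigma,G\omg^*}=\brk{\sigma,\psi^*}$. Passing to the $(\xi,\eta)$ coordinates, $\brk{\sigma,\psi^*}=\int_{S^1}\int_{r_0}^{r_0+h(\eta)}\psi^*(\xi,\eta)\,J(\xi,\eta)\,d\xi\,d\eta$, and since $\psi^*$ vanishes on $\{\xi=r_0\}=\partial A^*$, the Taylor expansions $\psi^*(\xi,\eta)=\rd_\xi\psi^*|_{\xi=r_0}(\xi-r_0)+O((\xi-r_0)^2)$ and $J=J_0+O(\xi-r_0)$ give the inner integral as $\tfrac12\,\rd_\xi\psi^*|_{\xi=r_0}\,J_0\,h(\eta)^2$ up to an $O(|h|^3)$ error. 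Since $q=J_0h$ and $I_0=\rd_\xi\psi^*|_{\xi=r_0}/J_0$, the main term is exactly $\tfrac12 I_0 q^2$, and the cubic remainder integrates to $O(\nrm{h}_{L^\infty}\nrm{h}_{L^2}^2)=o(\nrm{h}_{L^2}^2)$. Hence $\brk{\sigma,G\omg^*}=\tfrac12\brk{q,I_0q}+o(\nrm{h}_{L^2}^2)$.

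The heart of the matter, and the step I expect to be the main obstacle, is the quadratic term $\tfrac12\brk{\sigma,G\sigma}$: one must show that the self-energy of the thin signed shell $\sigma$ converges to the single-layer energy of its radially integrated density $q$ on $S^1$. In $(\xi,\eta)$ coordinates this reduces to proving $\int_{S^1}\int_{S^1}(T-B)\,d\eta\,d\eta'=o(\nrm{h}_{L^2}^2)$, where $T(\eta,\eta')$ is the shell double integral of $\tfrac1{2\pi}\ln|x-x'|^{-1}JJ'$ over $\xi\in(r_0,r_0+h(\eta))$, $\xi'\in(r_0,r_0+h(\eta'))$, and $B(\eta,\eta')=q(\eta)q(\eta')K(\eta,\eta')$. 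The difficulty is that the logarithmic kernel is singular on the diagonal $\eta=\eta'$, so both $T$ and $B$ are singular (though integrable) there and the kernel cannot be Taylor expanded uniformly. I would split at $|\eta-\eta'|=\rho$ with $\rho=\rho(h)\To0$. On the off-diagonal region $|\eta-\eta'|\ge\rho$ the kernel and its $\xi$-derivatives are bounded by $O(\ln(1/\rho))$ and $O(1/\rho)$; replacing $x(\xi,\eta),x(\xi',\eta')$ by the boundary points $x(r_0,\eta),x(r_0,\eta')$ and the radial integrals $\int_{r_0}^{r_0+h}J\,d\xi$ by $q=J_0h$ (an $O(h^2)$ correction) yields an error $\aleq(\rho^{-1}\nrm{h}_{L^\infty}+\ln(1/\rho)\nrm{h}_{L^\infty})\nrm{h}_{L^2}^2$. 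On the near-diagonal region $|\eta-\eta'|<\rho$ I bound $T$ and $B$ crudely via $|K|\aleq\ln(1/|\eta-\eta'|)$ and the Schur/Young estimate $\int_{|s|<\rho}|\ln|s||\,ds\aleq\rho\ln(1/\rho)$, giving a contribution $\aleq\rho\ln(1/\rho)\nrm{q}_{L^2}^2$ from each. Choosing $\rho=\nrm{h}_{L^\infty}^{1/2}$ balances the two regions and makes the total $o(\nrm{h}_{L^2}^2)$, since $\nrm{q}_{L^2}\aeq\nrm{h}_{L^2}$. This gives $\tfrac12\brk{\sigma,G\sigma}=\tfrac12\brk{q,\calK q}+o(\nrm{h}_{L^2}^2)$, and adding the two contributions produces \eqref{eq:energy-expansion}.
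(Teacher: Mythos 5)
Your proposal is correct, but there is nothing in the paper to compare it against line by line: the paper does not prove this lemma at all, it imports it as \cite[Lemma 4.1]{Tang}. Your argument is, in substance, the one Tang's lemma encapsulates and the one the paper itself sketches in the Appendix for the annulus: the exact quadratic identity $E[\omg_h]-E[\omg^*]=\brk{\sigma,G\omg^*}+\tfrac12\brk{\sigma,G\sigma}$ (the same decomposition used in Lemma \ref{lem:energy-compare1}), the use of the constraints \eqref{eq:second-order} to replace $G\omg^*$ by $\psi^*$, and the Taylor expansion of $\psi^*$ at $\{\xi=r_0\}$ (where it vanishes) to produce $\tfrac12\brk{q,I_0q}$ --- that part matches the paper's Appendix computation of the term $I$ verbatim. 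The genuinely new content you supply is the rigorous convergence of the shell self-energy $\tfrac12\brk{\sigma,G\sigma}$ to the single-layer form $\tfrac12\brk{q,\calK q}$, which is precisely the step the paper (both here and in the Appendix, ``As in \cite{Tang}, we see that\dots'') defers to Tang. Your diagonal-splitting estimate is sound: off the diagonal $|\eta-\eta'|\ge\rho$ the frozen-kernel replacement costs $O(\rho^{-1}\nrm{h}_{L^\infty}+\ln(1/\rho)\nrm{h}_{L^\infty})\nrm{h}_{L^2}^2$ (valid since $\rho=\nrm{h}_{L^\infty}^{1/2}\gg\nrm{h}_{L^\infty}$ keeps $|x-x'|\gtrsim|\eta-\eta'|$ on the shells), and near the diagonal both $T$ and $B$ are controlled by the Schur bound $\int_{|s|<\rho}\ln(1/|s|)\,ds\lesssim\rho\ln(1/\rho)$ together with $|x-x'|\gtrsim|\eta-\eta'|$, which holds uniformly in the radial variables because the radii are bounded below. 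Two minor points worth making explicit if you write this up: (i) $\psi^*$ is only $C^{1,\alp}$ across $\partial A^*$ (it is smooth from each side), but a one-sided expansion with remainder $O(|\xi-r_0|^{1+\alp})$ suffices, since $\int|h|^{2+\alp}\le\nrm{h}_{L^\infty}^{\alp}\nrm{h}_{L^2}^2=o(\nrm{h}_{L^2}^2)$; (ii) the first-moment condition in \eqref{eq:second-order} is indeed not needed for the expansion itself when the patch rotates about the origin, as you observe --- it is used later in the spectral step, not here.
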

 
Given the above key lemma, we are in a position to conclude the main result of this section. 

\begin{proposition}\label{prop:key-graph}
	For $\omg_{h}  \in \calN_{\varep,B_{\bar{r}}}(\omg^{m,\bt}) \cap \calM_{m}(\omg^{m,\bt})$ with $h \in C^{1}(S^{1})$, we have \begin{equation*}\label{eq:energy-graph}
		\begin{split}
			E[\omg^{m,\bt}] - E[\omg_{h}] \ge C \nrm{\omg^{m,\bt} - \omg_{h}}_{L^{1}}^{2}
		\end{split}
	\end{equation*} for some $C>0$. 
\end{proposition}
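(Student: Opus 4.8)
The plan is to feed the $m$-fold symmetric graph perturbation into Tang's energy expansion (Lemma \ref{lem:energy-expansion}) and then reduce the whole proposition to a single coercive negativity estimate $\brk{q,\calL q} \le -c\nrm{q}_{L^2}^2$ for the quadratic form, which I would verify by Fourier analysis, using the $m$-fold symmetry together with the moment constraints to discard every Fourier mode on which $\calL$ fails to be strictly negative.

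First I would apply Lemma \ref{lem:energy-expansion} with $\omg^* = \omg^{m,\bt}$. Its hypotheses \eqref{eq:second-order} all hold: the zeroth- and second-moment conditions $\int(\omg_h-\omg^{m,\bt})\,dx = 0$ and $\int|x|^2(\omg_h-\omg^{m,\bt})\,dx = 0$ are two of the defining conditions of $\calM_m(\omg^{m,\bt})$, while the first-moment condition $\int x(\omg_h-\omg^{m,\bt})\,dx = 0$ is automatic from $m$-fold symmetry, since for $m\ge2$ the rotation $R_{2\pi/m}$ fixes no nonzero vector and hence the center of mass of any $m$-fold symmetric patch is the origin. This gives $E[\omg_h]-E[\omg^{m,\bt}] = \frac12\brk{q,\calL q} + o(\nrm{h}_{L^2}^2)$ with $q = J_0 h$, and, because $\nrm{\omg^{m,\bt}-\omg_h}_{L^1}\aeq\nrm{q}_{L^1(S^1)}\aleq\nrm{q}_{L^2(S^1)}\aeq\nrm{h}_{L^2}$, it reduces the claim to showing that $\calL$ is negative definite on the constrained symmetric class (the $o(\nrm{h}_{L^2}^2)$ error is absorbed once $\varep$ is small).

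Second I would diagonalize $\calL$ in the disc limit $\bt = 0$. There $x(r_0,\eta) = r_0 e^{i\eta}$, so $K(\eta,\eta') = \frac{1}{2\pi}\bigl(\ln\frac{1}{r_0} + \sum_{n\ge1}\frac{1}{n}\cos(n(\eta-\eta'))\bigr)$, and combining this with $I_0 = -\frac{1}{2m}$ (computed from $\rd_\xi\psi^{m,0}|_{\xi=r_0} = (-\frac12+\Omg^{m,0})r_0 = -\frac{r_0}{2m}$ and $J_0 = r_0$), the Fourier modes $e^{in\eta}$ are eigenfunctions with eigenvalues $\lambda_0 = -\frac{1}{2m}$ (using $r_0 = 1$) and $\lambda_n = -\frac{1}{2m}+\frac{1}{2n}$ for $n\ge1$. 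Thus $\lambda_n > 0$ exactly for $1\le n<m$, $\lambda_m = 0$, and $\lambda_n \le -\frac{1}{4m}$ for $n\ge 2m$. Here the symmetry is decisive: by Lemma \ref{lem:m-symm} an $m$-fold symmetric $q$ has vanishing Fourier coefficients for $0<n<m$, so all the positive eigenvalues are simply absent, and the only non-negative contribution comes from the marginal mode $n = m$.

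The crux, and the step I expect to be the main obstacle, is controlling this marginal mode $n = m$ for the true wave $\bt>0$. Its $\sin(m\eta)$ component is the rotational zero mode (genuinely of eigenvalue $0$) and is removed by the last constraint $\int r^m\sin(m\tht)\omg\,dx = 0$, which to leading order annihilates the $\sin(m\eta)$ Fourier coefficient of $q$. The $\cos(m\eta)$ component survives and must be shown to acquire a \emph{strictly} negative eigenvalue. Its first-order-in-$\bt$ correction vanishes — the $\cos(m\eta)$ variation of $I_0$ contributes $\bt\int\cos^3(m\eta)\,d\eta = 0$, in agreement with the eigenvalue being even in $\bt$ under the rotation $\tht\mapsto\tht+\pi/m$ that sends $\bt\mapsto-\bt$ — so the shift is of order $\bt^2$, and I expect it to equal $-c(m)\bt^2$ with $c(m)>0$. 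Pinning down the sign of this $O(\bt^2)$ correction requires expanding both the geometry-dependent kernel $K$ through the perturbed boundary $x(r_0,\eta) = (r_0+g^{m,\bt}(\eta))e^{i\eta}$ and the relative stream function $\rd_\xi\psi^{m,\bt}$ to second order; this is the only genuinely delicate computation. Once it is established, $\calL$ is negative definite with a spectral gap $c = c(m,\bt)>0$ on the constrained symmetric subspace (the mode $0$, the $\cos(m\eta)$ mode, and all modes $n\ge2m$ strictly negative, the $\sin(m\eta)$ mode removed), and combining $\frac12\brk{q,\calL q}\le -\tfrac{c}{2}\nrm{q}_{L^2}^2$ with the comparison $\nrm{q}_{L^2}^2\ageq\nrm{\omg^{m,\bt}-\omg_h}_{L^1}^2$ yields the asserted estimate.
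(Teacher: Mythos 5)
Your skeleton is the paper's: apply Lemma \ref{lem:energy-expansion} with $\omg^*=\omg^{m,\bt}$ (your observation that the first-moment condition in \eqref{eq:second-order} is automatic from $m$-fold symmetry is correct and makes explicit what the paper leaves implicit), diagonalize at $\bt=0$ with eigenvalues $-\frac{1}{2m}+\frac{1}{2n}$, discard the modes $0<n<m$ by Lemma \ref{lem:m-symm}, and remove $\sin(m\eta)$ with the rotational constraint. The genuine gap is exactly at the point you yourself flag as the main obstacle: you never prove that the $\cos(m\eta)$ mode acquires a strictly negative $O(\bt^2)$ eigenvalue; you only conjecture the sign and write ``once it is established.'' Worse, the conjectured sign is wrong, at least for $m=2$, which the proposition covers. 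The reason is structural. The direction $q_{\mathrm{tan}}=J_0\,\rd_\bt g^{m,\bt}$ tangent to the (fixed-area) branch of rotating waves is $m$-fold symmetric and purely cosine, so it survives every one of your mode removals and lies in the subspace on which you claim $\calL\le -c<0$. On the other hand, $\tfrac12\brk{q,\calL q}$ is, for \emph{arbitrary} small graph perturbations, the increment of the augmented functional $F[\omg]=E[\omg]+\tfrac{\Omg^{m,\bt}}{2}\int r^2\omg\,dx+C\int\omg\,dx$, whose first variation $\psi^{m,\bt}$ vanishes on $\rd A^{m,\bt}$; the moment conditions \eqref{eq:second-order} serve only to identify $F$-increments with $E$-increments. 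Differentiating $F[\omg^{m,\bt+s}]$ twice in $s$, using that $\psi^{m,\bt+s}$ vanishes precisely where $\rd_s\omg^{m,\bt+s}$ is supported and that the multipliers are frozen at $\bt$, gives $\brk{q_{\mathrm{tan}},\calL q_{\mathrm{tan}}}=-\tfrac12\,\rd_\bt\Omg^{m,\bt}\cdot\rd_\bt L(\bt)$ with $L(\bt)=\int r^2\omg^{m,\bt}\,dx$. For Kirchhoff ellipses of fixed area, $\Omg=ab/(a+b)^2$ strictly \emph{decreases} while the impulse $L=\tfrac{\pi}{4}ab(a^2+b^2)$ strictly increases along the branch, so this quantity is strictly \emph{positive} of order $\bt^2$: the wave is a saddle, not a maximum, of $F$ on your subspace, and no second-order expansion of $I_0$ and $K$ can produce the spectral gap you want.

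The missing idea---and the way the paper closes exactly this hole---is to use the impulse constraint a second time, spectrally, rather than only as a hypothesis of Lemma \ref{lem:energy-expansion}. Since $g^{m,\bt}(\tht)=\bt\cos(m\tht)+o(\bt)$, expanding $\int|x|^2(\omg_h-\omg^{m,\bt})\,dx=0$ to quadratic order pins down precisely the dangerous Fourier coefficient: $\int_{S^1}\cos(m\eta)\,q(\eta)\,d\eta=O(\bt^{-1}\nrm{h}_{L^2}^2)+o(\nrm{h}_{L^2})$, which is \eqref{eq:qm-cos}. Combining this with \eqref{eq:q0}, \eqref{eq:qm-sin}, \eqref{eq:qn}, and imposing $\nrm{h}_{L^2}\le\bt^2$ (this is why the constants $C,\varep$ in the proposition must depend on $\bt$), the total contribution of all modes $|n|\le m$ to $\brk{K^*q,q}$ is at most $C\bt^2\nrm{q}_{L^2}^2$, which the uniformly negative term $-\tfrac{1}{2m}\nrm{q}_{L^2}^2$ from $I_0$ absorbs once $\bt$ is small. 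In other words, the admissible class $\calM_m(\omg^{m,\bt})$ is used to kill the dangerous \emph{coefficient} of $q$, not to make the dangerous \emph{eigenvalue} of $\calL$ negative---the sign you were trying to compute is never needed (and is in fact the wrong one). With that substitution for your treatment of the $n=m$ mode, the remainder of your argument goes through as in the paper's Steps 2--5.
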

\begin{proof}
	Note that $\nrm{h}_{L^2}$ and $\nrm{q}_{L^2}$ are equivalent up to constants. 
{We apply Lemma \ref{lem:energy-expansion} with $\omega^*=\omega^{m,\bt}$ and} proceed in several steps. 
	
	\medskip

	\noindent \underline{Step 1: Cancellation conditions.} To begin with, we note that the first condition from \eqref{eq:second-order} implies \begin{equation}\label{eq:imp1}
		\begin{split}
			0 = \int_{0}^{2\pi} \int_{r_0}^{r_0+h } J d\xi d\eta ,
		\end{split}
	\end{equation} which gives after expanding $J(\xi,\eta) = J_0(\eta) + O(|\xi-r_0|)$ and integrating in $\xi$, \begin{equation}\label{eq:q0}
	\begin{split}
		\int_{S^{1}} q (\eta) d\eta = O(\nrm{h}_{L^2}^2). 
	\end{split}
\end{equation} Similarly, the last condition from \eqref{eq:second-order} gives \begin{equation*}
\begin{split}
	0 =  \int_{0}^{2\pi} \int_{r_0}^{r_0+h } (\xi + g^{m,\bt}(\eta))^{2} J(\xi,\eta) d\xi d\eta .
\end{split}
\end{equation*} Writing $\xi = r_{0} + (\xi - r_{0})$, applying \eqref{eq:imp1} and expanding $J$ as above, we obtain that \begin{equation*}\label{eq:qm2}
\begin{split}
	2r_{0}\int_{S^{1} } g^{m,\bt} q d\eta = O(\nrm{h}_{L^2}^2) + o(\bt\nrm{h}_{L^2}).
\end{split}
\end{equation*} That is, \begin{equation}\label{eq:qm-cos}
\begin{split}
	\int_{S^{1} } \cos(m\eta) q(\eta) d\eta = O(\bt^{-1}\nrm{h}_{L^2}^2) + o(\nrm{h}_{L^2}).
\end{split}
\end{equation} Next, from the condition \begin{equation*}
\begin{split}
	\int r^{m} \sin(m\tht) \omg_{h} dx = 0,
\end{split}
\end{equation*} we obtain that \begin{equation*}
\begin{split}
	0 =  \int_{0}^{2\pi} \int_{r_0}^{r_0+h } (r_{0} + g^{m,\bt}(\eta) + (\xi-r_0))^{m} \sin(m\eta) J(\xi,\eta) d\xi d\eta .
\end{split}
\end{equation*} Then, it follows \begin{equation}\label{eq:qm-sin}
\begin{split}
	\int_{S^{1}} \sin(m\eta) q(\eta) d\eta = O(\bt\nrm{h}_{L^2}) + O(\nrm{h}_{L^2}^2). 
\end{split}
\end{equation} Lastly, applying Lemma \ref{lem:m-symm} to $q$ (note that $J_0$ is $m$-fold symmetric and so does $q$) gives \begin{equation}\label{eq:qn}
\begin{split}
	\int_{S^{1}} e^{in\eta} q(\eta) d\eta = 0, \qquad 0<n<m. 
\end{split}
\end{equation} 

\medskip

\noindent \underline{Step 2: Computation for $I_{0}$.} We compute that \begin{equation*}
		\begin{split}
			I_0 = -\frac{1}{r_0+ \bt\cos(m\tht){+o(\beta)}} \left(\frac{r_0}{2m} + (\frac{1}{2} +\frac{1}{2m})\bt\cos(m\tht)+{o(\bt)}\right) = - \left(  \frac{1}{2m} + {(\frac12+\frac1{2m})} \frac{\bt}{r_{0}} \cos(m\tht) + {o(\bt)} \right). 
		\end{split}
	\end{equation*} This gives \begin{equation}\label{eq:I0-est}
	\begin{split}
		\nrm{ I_{0}q + \frac{1}{2m} q }_{L^{2}} \le C\bt\nrm{q}_{L^2}. 
	\end{split}
\end{equation}

\medskip

\noindent \underline{Step 3: Computation for $K$.} We shall replace $K$ with $K^*$ up to an $O(\bt)$ error, which is the convolution operator arising in the disc case. The operator $K^*$ simply corresponds to the case $\bt = 0$. To this end, we first note that using the condition \eqref{eq:q0}, we have that \begin{equation*}
	\begin{split}
		K[q] (\eta) & := \int_{S^{1}} K(\eta,\eta') q(\eta')d\eta'   = - \frac{1}{2\pi} \int_{S^{1}} \ln\left| ({r_{0} + g^{m,\bt}(\eta)}) e^{i\eta} - {(r_0+ g^{m,\bt}(\eta'))}e^{i\eta'} \right| q(\eta') d\eta' \\ 
		& = - \frac{1}{2\pi} \int_{S^{1}} \ln\left| 1 - \frac{r_0+ g^{m,\bt}(\eta')}{r_{0} + g^{m,\bt}(\eta)} e^{i(\eta'-\eta)} \right| q(\eta') d\eta' + O(\nrm{h}_{L^2}^2). 
	\end{split}
\end{equation*} We define \begin{equation*}
\begin{split}
	K^*[q](\eta) := - \frac{1}{2\pi} \int_{S^{1}} \ln\left| 1 -  e^{i(\eta'-\eta)} \right| q(\eta') d\eta' .
\end{split}
\end{equation*} Then, with pointwise bounds \begin{equation*}
\begin{split}
	\left| 1 -  \frac{r_0+ g^{m,\bt}(\eta')}{r_{0} + g^{m,\bt}(\eta)} \right| \le C\bt |\eta'-\eta|, \qquad 	\left| 1 -  e^{i(\eta'-\eta)} \right| \ge c|\eta'-\eta|, 
\end{split}
\end{equation*}  we obtain that \begin{equation*}
\begin{split}
	\left| 	K^*[q] - 	K[q]   \right|(\eta) \le C\bt \nrm{q}_{L^1} \le C\bt\nrm{q}_{L^2}. 
\end{split}
\end{equation*}

\medskip

\noindent \underline{Step 4: Coercivity.} From the previous step and \eqref{eq:I0-est}, we have \begin{equation*}
	\begin{split}
		\brk{\calL q,q} & \le \brk{I_0q, q} + \brk{K^*q, q} + C\bt\nrm{q}_{L^2}^2  \le - \frac{1}{2m} \nrm{q}_{L^2}^2 + \brk{K^*q, q} + C\bt\nrm{q}_{L^2}^2. 
	\end{split}
\end{equation*} We now expand $q$ in Fourier series \begin{equation*}
	\begin{split}
		q = \sum_{n\in\bbZ} q_{n} e^{in\eta}. 
	\end{split}
\end{equation*} Since $q$ is real, we have that $q_{-n} = \overline{q_{n}}$. Now, we recall the exact formula 
{
$$\frac 1 {2n}=-\frac 1 {2\pi}\int_0^{2\pi}\ln\left|1-e^{i\eta'}\right|\,e^{in\eta'}d\eta',\quad n>0$$ so that 
}
\begin{equation*}
\begin{split}
	\brk{K^*q, q} = \alp_0 |q_0|^2 + \sum_{n\in\bbZ\backslash \{ 0\}} \frac{1}{2n} |q_{n}|^{2} = I + II, {\quad \alp_0:=K^*1,}
\end{split}
\end{equation*} where for some $C_0>0$ depending on $m$, we have \begin{equation*}
\begin{split}
	II := \sum_{|n|>m} \frac{1}{2n} |q_{n}|^{2} < \left(\frac{1}{2m} - C_{0} \right) \sum_{|n|>m} |q_n|^{2}. 
\end{split}
\end{equation*} Next, \begin{equation*}
\begin{split}
	I :=  \alp_0 |q_0|^2 +  \sum_{0<|n|\le m} \frac{1}{2n} |q_{n}|^{2} \le C\bt^2\nrm{q}_{L^2}^2, 
\end{split}
\end{equation*} using \eqref{eq:q0}, \eqref{eq:qm-cos}, \eqref{eq:qm-sin} and \eqref{eq:qn}, and taking $\nrm{h}_{L^2}\le \bt^{2}$. Then, using the Plancherel theorem, we continue estimating as follows: \begin{equation*}
\begin{split}
	\brk{\calL q,q} & \le  - \frac{1}{2m} \nrm{q}_{L^2}^2+  \left(\frac{1}{2m} - C_{0} \right) \sum_{|n|>m} |q_n|^{2}  + C\bt\nrm{q}_{L^2}^2 \le - \frac{C_{0}}{2} \nrm{q}_{L^2}^2,
\end{split}
\end{equation*} by taking $\bt>0$ smaller if necessary in a way depending only on $C_{0}$. (Recall that $C_0$ depends only on $m$.) 

\medskip

\noindent \underline{Step 5: Completion of the proof.} From \eqref{eq:energy-expansion} in Lemma \ref{lem:energy-expansion}, we have that \begin{equation*}
	\begin{split}
		E[\omg^{m,\bt}] -E[\omg_h] = -\frac{1}{2} \brk{\calL q, q} + o(\nrm{h}_{L^2}^2) \ge \frac{C_0}{8}\nrm{q}_{L^2}^2. 
	\end{split}
\end{equation*} However, it is clear that \begin{equation*}
\begin{split}
	\nrm{ \omg_h - \omg^{m,\bt} }_{L^{1}} = \int_0^{2\pi} \left| \int_{r_0}^{r_0+ h} J d\xi \right| d\eta \le C\nrm{q}_{L^2}(1 + \nrm{q}_{L^2}). 
\end{split}
\end{equation*} For $\nrm{q}_{L^2}$ small, we conclude that \begin{equation*}
\begin{split}
	E[\omg^{m,\bt}] -E[\omg_h] \ge \nrm{ \omg_h - \omg^{m,\bt} }_{L^{1}}^{2}.
\end{split}
\end{equation*} This finishes the proof. \end{proof}

\section{Refined stability and filamentation}\label{sec:fila}
In this entire section, we fix an integer $m\geq 2$ so that every estimate and constant appeared in this section may depend on the choice of the integer $m$ even though we do not specify the dependency on $m$ for simpler presentation. When considering a  Kelvin wave, we always assume $r_0=1$ so that $A^{m,\beta}=\{r<1+g^{m,\beta}(\theta)\}.$ Let us give an outline of the arguments. 

\subsection{Outline of the proof}
\noindent  \textbf{Refined stability (Proposition \ref{thm:refined})}\\
To prove  the refined estimate \begin{equation}\label{idea_concl}
	|\mathcal{T}_m[\Delta\Theta(t)-\Omega^{m,\beta}\Delta t   ] | \lesssim\varepsilon^{1/2},\quad
	\,\Delta t =\mathcal{O}(\beta),
\end{equation}
we combine  a bootstrap argument with the orbital stability result (Proposition \ref{thm:stable}).
Indeed, we first show that the degree of adaptive rotation $\Theta$ cannot change significantly over a \textit{small} period of time (Lemma \ref{lem:no_high_jump}):
$$
|\mathcal{T}_m[\Delta \Theta(t) ]  | \lesssim  \varepsilon^{1/2}, \quad   \,\Delta t =\mathcal{O}(\varepsilon)\quad \mbox{if}\quad \varepsilon\lesssim \beta^2.
$$ It means that our perturbed solution behaves very similarly to the rotating Kelvin wave at least for a short period of time (of order $\varepsilon$). Since the ``Kelvin set'' $A^{m,\beta}$ rotates exactly under its own flow map, we can show that if we leave the  set $A^{m,\beta}$ in the perturbed flow from the perturbed solution for a short time, then the set lies on a \textit{small} neighborhood of the precisely rotated Kelvin set
$A^{m,\beta}_{\Omega^{m,\beta}t}$:
\begin{equation*} 
	\begin{split}
		\phi(t,   A^{m,\beta} )\subset  
		\{ x\in\mathbb{R}^2\,:\, \dist(x, A^{m,\beta}_{ \Omega^{m,\beta} t})
		\lesssim \beta\cdot\varepsilon^{1/2}\}.
	\end{split}
\end{equation*}   
This detailed information leads to the above conclusion \eqref{idea_concl}.

\medskip
\noindent  \textbf{Unconditional stability up to finite time without any adjusting rotation (Theorem \ref{cor:finite_time})}\\
For any fixed time $T>0$, we show that the rotating Kelvin wave is stable in $L^1$-sense without any adjusting rotation and without a condition on the evolution such as \eqref{cond_evol}. 
To do this, we add the above refined estimate  \eqref{idea_concl} for small time repeatedly to derive a finite time result:
\begin{equation*}
	|\mathcal{T}[\Theta(t )- \Omega t]    | \lesssim \left(\frac{T}{\beta}+1\right) \varepsilon^{1/2}\quad 			  \mbox{for all}\quad   t\in[0,T).
\end{equation*}
However,  it requires 
that  the perturbation should remain for the given time duration in a certain small ball containing the Kelvin wave (see the condition \eqref{cond_evol}). By observing the dynamics of the Kelvin wave and by comparing it with the perturbed one, we derive the \textit{initial} condition \eqref{ass_st_cor} that guarantees the hypothesis during the evolution.

\medskip
\noindent  \textbf{Filamentation (Theorem \ref{thm:instability})}\\
To prove perimeter growth of boundary, we recall that the Kelvin waves are close to the unit disk when the parameter $\beta>0$ is sufficiently small. We also note that the angular velocity of the disk has a  non-trivial derivative in the radial direction outside the disk. As is well known, the further out of the disk, the slower the angular speed. This idea was already used in \cite{CJ} when deriving an example of perimeter growth near the disk. Similarly, we take two points from the boundary of a perturbed patch, and trace their trajectories. From the above finite time stability,
each trajectory remains arbitrary close to the original orbit from the Kelvin wave  for a large desired amount of time.  This process is possible by assuming that the perturbation is small enough in $L^1$. 
When considering any curve lying on the initial boundary connecting these points, the curve is transported    by the perturbed flow so that 
its length increases by the difference  multiplied by time.

\subsection{Notations for Kelvin wave and simple estimates}\label{subsec:fil_pre}
If $\beta>0$ is small enough so that the Kelvin wave $\omega^{m,\beta}$ exists, then we simply denote,\begin{equation*}
	\begin{split}
		\mathbb{T}=\mathbb{T}_m, \qquad \mathcal{T}[\cdot_\alp]=\mathcal{T}_m[\cdot_\alp]:\mathbb{R}\to\mathbb{T}_m, \qquad {\Omega}=\Omega^{m,\beta},\qquad g = g^{m,\beta},
	\end{split}
\end{equation*}  
$$\bar{A}=A^{m,\beta}, \quad \bar{A}_\alpha=\mathrm R_{\alpha}[\bar{A}]\quad\mbox{for}\quad  \alpha\in \mathbb{R},$$  where $\mathrm R_{\alp}$ is the (counter-clockwise) rotation map by the angle $\alp$, and
$$
\bar{\omega}_\alp
:= \mathbf{1}_{\bar A_{\alp}}, \quad  \bar{\omega}=\bar{\omega}_0:
= \mathbf{1}_{\bar A}.
$$
 We also set 
\begin{equation*}
\bar{I}_{\alpha}:=\int_{\mathbb{R}^2}e^{im\theta}\bar{\omega}_{\alpha}(x)dx
=\int_{\bar{A}_\alp}e^{im\theta} dx
\in \mathbb{C},\quad \bar{I}:=\bar{I}_0\in\mathbb{C}.
\end{equation*} Here we use the polar coordinate $(r,\theta)$ for $x\in\mathbb{R}^2$. 
Then
it is easy to check, for each $\alpha\in\mathbb{R}$, 
\begin{equation}\label{id:bar_i_alp}
 \bar{I}_{\alpha}= \bar{I}e^{im\alpha}=\bar{I}_{\mathcal{T}[\alp]}.
\end{equation}
We collect some properties of Kelvin waves. 
\begin{lemma} \label{lem:v_st} 
There exist constants $c_i >0$ for  $i=1,\dots, 5$ such that
\begin{equation}\label{est_bar_i}
c_1\beta\leq |\bar{I} |\leq c_2\beta,
\end{equation}
\begin{equation}\label{est_bar_i_alp}
c_3\cdot|\mathcal{T}[\alp]|\cdot \beta\leq |\bar{I}-\bar{I}_\alp |,
 \quad \forall \alp\in
 \mathbb{R},
\end{equation}
\begin{equation}\label{est_v_st_der}
\sup_{\theta\in \mathbb{T}}| g'(\theta)|\leq c_4, 
\end{equation}
\begin{equation}\label{est_v_st_der2}
|\bar A\setminus \bar A_\alp|\leq c_5\mathcal{T}[\alp],\quad\forall\alp\in\mathbb{R},
\end{equation} 
hold for any sufficiently small $\beta>0$.

\end{lemma}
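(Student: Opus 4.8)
The plan is to obtain all four bounds by elementary computation in polar coordinates, the only external input being the regularity and smooth $\beta$-dependence of the Kelvin wave established in \cite{HMV}. I would treat \eqref{est_v_st_der} first, since it feeds into \eqref{est_v_st_der2}. Because $\partial A^{m,\beta}$ is real-analytic and $g^{m,\beta}$ depends smoothly on $\beta$ with $g^{m,0}\equiv 0$, one gets a uniform bound $\|g^{m,\beta}\|_{C^1}\le C\beta$ for all small $\beta$; in particular $\sup_\theta|g'(\theta)|\le c_4$. This regularity fact is the one genuinely nontrivial ingredient, and the point to be careful about is that the remainder $R:=g-\beta\cos(m\theta)$ is controlled in $C^1$ and consists only of harmonics $\cos(km\theta)$ with $k>1$; everything below rests on these two properties of $g$.

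For \eqref{est_bar_i}, I would write, in polar coordinates,
\[
\bar I=\int_{\bar A}e^{im\theta}\,dx=\frac12\int_0^{2\pi}e^{im\theta}\,(1+g(\theta))^2\,d\theta .
\]
Expanding $(1+g)^2=1+2g+g^2$ and using $\int_0^{2\pi}e^{im\theta}\,d\theta=0$, the leading term is $\int_0^{2\pi}e^{im\theta}g(\theta)\,d\theta=\pi\beta$, because $R$ is orthogonal to $e^{im\theta}$; the quadratic term is $O(\beta^2)$ since $\|g\|_{L^\infty}=O(\beta)$. Hence $\bar I=\pi\beta+O(\beta^2)$, which gives \eqref{est_bar_i} with $c_1,c_2$ close to $\pi$ once $\beta$ is small.

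Next, \eqref{est_bar_i_alp} follows from the identity \eqref{id:bar_i_alp}: since $\bar I_\alpha=\bar I e^{im\alpha}$, we have $|\bar I-\bar I_\alpha|=|\bar I|\,|1-e^{im\alpha}|$. Writing $e^{im\alpha}=e^{im\mathcal{T}[\alpha]}$ and noting $m\mathcal{T}[\alpha]\in[-\pi,\pi)$, the elementary inequality $|1-e^{i\phi}|=2|\sin(\phi/2)|\ge \tfrac{2}{\pi}|\phi|$ for $|\phi|\le\pi$ gives $|1-e^{im\alpha}|\ge \tfrac{2m}{\pi}|\mathcal{T}[\alpha]|$. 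Combining with the lower bound in \eqref{est_bar_i} yields \eqref{est_bar_i_alp} with $c_3=2mc_1/\pi$.

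Finally, for \eqref{est_v_st_der2} I would use $\bar A\setminus\bar A_\alpha=\{(r,\theta):1+g(\theta-\alpha)\le r<1+g(\theta)\}$, whose area equals $\frac12\int_{\{g(\theta)>g(\theta-\alpha)\}}\big[(1+g(\theta))^2-(1+g(\theta-\alpha))^2\big]\,d\theta$. Since $\|g\|_{L^\infty}\ll1$, the bracket is at most $3\,|g(\theta)-g(\theta-\alpha)|$, so $|\bar A\setminus\bar A_\alpha|\le \tfrac32\int_0^{2\pi}|g(\theta)-g(\theta-\alpha)|\,d\theta$. As $g$ is $\tfrac{2\pi}{m}$-periodic I may replace $\alpha$ by $\mathcal{T}[\alpha]\in[-\pi/m,\pi/m)$, and then the mean value theorem together with \eqref{est_v_st_der} gives $|g(\theta)-g(\theta-\mathcal{T}[\alpha])|\le c_4|\mathcal{T}[\alpha]|$. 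This produces $|\bar A\setminus\bar A_\alpha|\le 3\pi c_4|\mathcal{T}[\alpha]|$, which is \eqref{est_v_st_der2}. The main obstacle throughout is not any single estimate but securing the uniform $C^1$ control of $g^{m,\beta}$ used in \eqref{est_v_st_der}; granting that, the lemma is routine.
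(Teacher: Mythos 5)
Your proof is correct and follows essentially the same route as the paper: the paper derives \eqref{est_bar_i} from the representation \eqref{defn_v_sta}, gets \eqref{est_bar_i_alp} from the identity $|\bar I-\bar I_\alp|=|\bar I||1-e^{im\alp}|$, and deduces \eqref{est_v_st_der}, \eqref{est_v_st_der2} from the $C^1$ convergence $g^{m,\beta}\to 0$ established in \cite{HMV}, exactly the three ingredients you use. Your write-up simply fills in the polar-coordinate computation of $\bar I$, the Jordan-type inequality for $|1-e^{im\alp}|$, and the periodicity-plus-mean-value argument that the paper leaves implicit.
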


 \begin{proof}

From the representation \eqref{defn_v_sta} of $\bar\omega$, we 
get \eqref{est_bar_i}. Then, by  \eqref{id:bar_i_alp}, we have
$$
 |\bar{I}-\bar{I}_\alp |=|\bar I||1-e^{im\alp}|,
$$ which gives  \eqref{est_bar_i_alp}. 
Lastly, \eqref{est_v_st_der}, \eqref{est_v_st_der2} follow from 
 $g^{m,\beta}\to g^{m,0}\equiv 0$ in $C^1$ as $\beta\to0$ (\textit{e.g.} see   \cite{HMV}).
 \end{proof}

 For   $\eta\geq 0$ and for $\alp\in\mathbb{R}$,  we denote the  $\eta$-neighborhood of $\bar A_{\alp} $ by
 \begin{equation}\label{defn_eta}
\bar A^{\eta}_{\alp}:=\{
x\in\mathbb{R}^2\,:\, 
dist(x,\bar A_{\alp})<\eta
\},\quad \bar A^{\eta}:=\bar A^{\eta}_{0}.
 \end{equation}
 When $\beta>0$ is sufficiently small, then we observe that
$$\bar A^\eta\subset \{(r,\theta): r\leq 1+g(\theta)+C\eta\}$$
for some  
$C>0$
thanks to the estimate 
\eqref{est_v_st_der} in Lemma \ref{lem:v_st}. It implies
\begin{lemma}\label{lem:C2}
There exists some $C_1>0$ such that
\begin{equation}\label{est_bar_eta}
|\bar A_\alp^\eta \setminus \bar A_\alp|=|\bar A^\eta \setminus \bar A|\leq C_1\eta,\quad \alp\in\mathbb{R}
\end{equation}
 holds for any sufficiently small $\beta>0$.
\end{lemma}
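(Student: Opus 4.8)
The plan is to reduce to $\alp=0$ by rotational invariance, then re-derive the radial containment announced just above the lemma via a Lipschitz estimate on the signed radial excess, and finally integrate in polar coordinates. No single step is deep; the only points requiring care are the \emph{uniformity} of the constants in $\alp$ and in small $\beta$, and the admissible range of $\eta$.

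First I would dispose of the equality in \eqref{est_bar_eta}. Since $\mathrm R_\alp$ is a Euclidean isometry with $\bar A_\alp=\mathrm R_\alp[\bar A]$, it preserves distances, so $\dist(\mathrm R_\alp x,\bar A_\alp)=\dist(x,\bar A)$ and hence $\bar A^{\eta}_\alp=\mathrm R_\alp[\bar A^{\eta}]$. As Lebesgue measure is rotation-invariant, $|\bar A^{\eta}_\alp\setminus\bar A_\alp|=|\bar A^{\eta}\setminus\bar A|$; this is precisely the claimed identity, and it remains only to bound the right-hand side (with a constant independent of $\alp$).

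Next I would justify the containment $\bar A^{\eta}\subset\{(r,\theta):r\le 1+g(\theta)+C\eta\}$ stated just before the lemma. Introduce the signed radial excess $\rho(x):=r-(1+g(\theta))$, which is smooth away from the origin and nonpositive on $\bar A$. A direct computation gives $\nb\rho=\hat r-\tfrac{g'(\theta)}{r}\hat\theta$, where $\hat r,\hat\theta$ are the polar unit vectors, so $|\nb\rho|=\big(1+g'(\theta)^2/r^2\big)^{1/2}\le(1+4c_4^2)^{1/2}=:C$ on $\{r\ge\tfrac12\}$, using \eqref{est_v_st_der}. Given $x\in\bar A^{\eta}\setminus\bar A$, a nearest point $y\in\partial\bar A$ satisfies $|x-y|\le\eta$ and $\rho(y)=0$; since $g^{m,\beta}\to0$ uniformly as $\beta\to0$, for $\beta$ and $\eta$ small both points, and the segment joining them, lie in $\{r\ge\tfrac12\}$. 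Hence $\rho(x)=\rho(x)-\rho(y)\le C|x-y|\le C\eta$, which is exactly the asserted containment, with $C$ uniform in $\alp$ and in small $\beta$.

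Finally the area bound follows by integrating in polar coordinates:
\begin{equation*}
|\bar A^{\eta}\setminus\bar A|\le\int_0^{2\pi}\!\!\int_{1+g(\theta)}^{1+g(\theta)+C\eta}r\,dr\,d\theta=\int_0^{2\pi}\Big((1+g(\theta))\,C\eta+\tfrac12 C^2\eta^2\Big)\,d\theta\le C_1\eta ,
\end{equation*}
where I used $|g|\le\tfrac12$ for small $\beta$ and restricted to $0\le\eta\le1$, so that the quadratic term is absorbed into $C_1$. The restriction on $\eta$ is the only subtle point: for large $\eta$ the set $\bar A^{\eta}$ fills out a disc of radius $\sim1+\eta$ and the difference grows quadratically, so a linear bound cannot hold globally. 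However, only small $\eta$ is ever used downstream (in the applications $\eta\lesssim\beta\varepsilon^{1/2}$), so the linear estimate on a bounded range of $\eta$ is exactly what is needed, and the constant $C_1$ inherits the $\alp$- and $\beta$-uniformity established in Steps~1 and~2.
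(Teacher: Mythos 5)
Your proof is correct and follows essentially the same route as the paper: the paper's own argument is exactly the containment $\bar A^\eta\subset\{(r,\theta):r\le 1+g(\theta)+C\eta\}$ deduced from the $C^1$ bound \eqref{est_v_st_der}, followed by the polar-coordinate area estimate, which you have simply carried out in full detail (rotation invariance for the equality, the Lipschitz bound on $r-(1+g(\theta))$ for the containment, and the explicit integration). Your observation that the linear bound can only hold on a bounded range of $\eta$ is a fair reading of an implicit restriction in the paper's statement as well, and it is harmless since downstream uses take $\eta\lesssim\beta\varepsilon^{1/2}$.
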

We also denote $\Omega^*=\Omega^*(m)=\frac{m-1}{2m}>0$, and observe
$$\Omega=\Omega^{m,\beta}\to \Omega^* \quad\mbox{as} \quad \beta\to 0.$$ From now on, we always assume $\beta>0$ sufficiently small   to have Lemmas
\ref{lem:v_st}, \ref{lem:C2} and to satisfy $\frac{1}{2}\Omega^*\leq\Omega\leq 2\Omega^*.$

For any  given $(t_0,x)\in\mathbb{R}_{\geq0}\times \mathbb{R}^2$, we denote the trajectory map $\bar\phi(\cdot_t,(t_0,x))$ from the  Kelvin wave  solution $\bar\omega(t)= \mathbf{1}_{\bar A_{\Omega t}}$  with $\bar u:=K *\bar \omega$    by solving \begin{equation}\label{defn_barpsi}
	\left\{
	\begin{aligned}
		&\frac{d}{dt}\bar\phi(t,(t_0,x))=\bar u(t,\bar \phi(t,(t_0,x))),\\&\bar \phi(t_0,(t_0,x))=x.
	\end{aligned}
	\right.
\end{equation}
We remark that $\bar u$ is Lipschitz in space-time from regularity of $\partial \bar A$, and    $$ \bar\phi(t,(t_0,\bar A_{\Omega t_0}))= \bar A_{\Omega t}, \quad \forall t, \forall t_0\geq 0.$$  
Lastly, we take any constant $\hat C>0$ such that  any function $f\in L^1\cap L^\infty(\mathbb{R}^2)$ satisfies
  \begin{equation}\label{defn_hat_C}
 \|\frac{1}{|x|}* f\|_{L^\infty(\mathbb{R}^2)}\leq \hat C \left(\|f\|_{L^1}\|f\|_{L^\infty}\right)^{1/2}
   \end{equation}  (\textit{e.g.} see Lemma 2.1 of \cite{isg}).

 \subsection{Only small jumps in $\Theta(t)$}

When considering an  initial data $\omega_0= \mathbf{1}_{A_0}$ for some  $m-$fold symmetric open set $A_0\subset \mathbb{R}^2$, we set 
\begin{equation*}
I(t)=\int_{\mathbb{R}^2}e^{im\theta}\omega(t,x)dx\in \mathbb{C},
\end{equation*}
where $\omega(t)= \mathbf{1}_{A(t)}$ is the corresponding solution. 
As in \eqref{defn_barpsi}, for given $(t_0,x)\in\mathbb{R}_{\geq0}\times \mathbb{R}^2$, the trajectory map $\phi(\cdot_t,(t_0,x))$ for the solution $\omega(t)$ is defined  by \begin{equation}\label{trajec_pertu}
\left\{
\begin{aligned}
	&\frac{d}{dt}\phi(t,(t_0,x))=u(t,\phi(t,(t_0,x)),\\
	&\phi(t_0,(t_0,x))=x.
\end{aligned}
\right.
\end{equation}

We observe that the adjusting function $\Theta$ in Proposition \ref{thm:stable} satisfying \eqref{property_orb} may not be continuous. Even, it does not have to be uniquely determined.  We first prove that the function $\Theta$ is allowed to have \textit{at most} small jumps of order $\sqrt{\varepsilon}$ (up to $2\pi/m$-additions).
\begin{lemma}\label{lem:no_high_jump} 
There exist constants $\tilde{\beta}>0$, $ \tilde{K}>0$, and $ \tilde{C}>0$   
satisfying  the following statement: \\

Let  
   $\beta\in(0,\tilde{\beta}]$ and
$\bar{\omega}=\omega^{m,\beta}$ be the Kelvin wave with $r_0=1$. 
 If 
a $m-$fold symmetric solution $\omega(t)= \mathbf{1}_{A(t)}$ with a function $\Theta:[0,T)\rightarrow\mathbb{R}$ for some $0<T\leq\infty$ satisfies 
\begin{equation}\label{ass_st_lem}
\sup_{t\in[0,T)}\|\omega(t)-\bar{\omega}_{\Theta(t)}\|_{L^1(\mathbb{R}^2)}\leq \varepsilon
\end{equation} for some $\varepsilon\in(0,\beta^2]$,
	then 
the function $\Theta$  satisfies
 \begin{equation}\label{eq:no_high_jump}
	\begin{split}
		|\mathcal{T}[\Theta(t) -\Theta(t')]  | \leq \tilde{K}\cdot \varepsilon^{1/2} 
	\end{split}
\end{equation} whenever $t,t'\in[0,T)$ satisfies $|t-t'|\leq \tilde{C}\cdot \varepsilon$.
\end{lemma}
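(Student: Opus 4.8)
The plan is to track the complex order parameter $I(t)=\int_{\bbR^2}e^{im\theta}\omega(t,x)\,dx$, which encodes the angular position of the patch, and to compare it against $\bar I_{\Theta(t)}=\bar I\,e^{im\Theta(t)}$ (using \eqref{id:bar_i_alp}). Since $|e^{im\theta}|=1$, hypothesis \eqref{ass_st_lem} gives at once $|I(t)-\bar I_{\Theta(t)}|\leq\nrm{\omega(t)-\bar\omega_{\Theta(t)}}_{L^1}\leq\varepsilon$ for all $t$. Combining this with the lower bound \eqref{est_bar_i_alp} applied to $\alpha=\Theta(t)-\Theta(t')$ and the triangle inequality, I obtain for any $t,t'\in[0,T)$
\[
c_3\beta\,|\mathcal{T}[\Theta(t)-\Theta(t')]|\;\leq\;|\bar I_{\Theta(t)}-\bar I_{\Theta(t')}|\;\leq\;2\varepsilon+|I(t)-I(t')|.
\]
So the entire statement reduces to a purely temporal Lipschitz bound $|I(t)-I(t')|\leq C_\ast|t-t'|$ with $C_\ast$ depending only on $m$: over the window $|t-t'|\leq\tilde C\varepsilon$ the right-hand side is then $\lesssim\varepsilon$, and dividing by $c_3\beta$ while using $\varepsilon\leq\beta^2$ (hence $\varepsilon/\beta\leq\varepsilon^{1/2}$) yields $|\mathcal{T}[\Theta(t)-\Theta(t')]|\leq\tilde K\varepsilon^{1/2}$. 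One may then simply take $\tilde C=1$ and $\tilde K=(2+C_\ast)/c_3$.

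The crux is the Lipschitz bound on $I$, and the essential feature I must exploit is that it be \emph{independent of the perimeter} of $A(t)$, since the perimeter is precisely what we expect to blow up. The crude estimate $|I(t)-I(t')|\leq|A(t)\,\triangle\,A(t')|$ is useless here, as it scales with the length of $\partial A(t)$. Instead I differentiate along the flow: writing $x^\perp=(-x_2,x_1)$, one has $\nabla(e^{im\theta})=im\,e^{im\theta}\,x^\perp/|x|^2$, so incompressibility ($\nabla\cdot u=0$) and the transport equation give
\[
\frac{d}{dt}I(t)=\int_{\bbR^2}\omega\,\big(u\cdot\nabla e^{im\theta}\big)\,dx,\qquad\Big|\frac{d}{dt}I(t)\Big|\leq m\,\nrm{u}_{L^\infty}\int_{A(t)}\frac{dx}{|x|}.
\]
This is a \emph{bulk} integral that never sees $\partial A(t)$. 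The velocity is controlled by \eqref{defn_hat_C}, $\nrm{u}_{L^\infty}\leq\tfrac{\hat C}{2\pi}|A(t)|^{1/2}$, and since the area is conserved with $|A(t)|=|A_0|\leq|\bar A|+\varepsilon$ bounded, $\nrm{u}_{L^\infty}\leq C$. (The $|x|^{-1}$ weight is locally integrable in the plane, so the formula is legitimate; to be fully rigorous one differentiates along the measure-preserving flow map and applies Fubini, the trajectories hitting the origin forming a null set.)

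It then remains to bound $\int_{A(t)}|x|^{-1}\,dx$ by a constant, and this is the one place where the disc-like geometry and the smallness of the perturbation are genuinely used. Decompose $A(t)=(\bar A_{\Theta(t)}\setminus H)\cup X$ with $|H|,|X|\leq\varepsilon$. On the Kelvin set, by rotation invariance of the weight, $\int_{\bar A_{\Theta(t)}}|x|^{-1}dx=\int_{\bar A}|x|^{-1}dx=\int_0^{2\pi}(1+g(\theta))\,d\theta\leq C$. For the excess $X=A(t)\setminus\bar A_{\Theta(t)}$, note that $\bar A_{\Theta(t)}\supset\{|x|<1-C\beta\}$ for $\beta$ small (because $\nrm{g}_{C^1}\to0$, cf.\ \eqref{est_v_st_der}), so $X\subset\{|x|\geq 1-C\beta\}$ where $|x|^{-1}\leq2$; hence $\int_X|x|^{-1}dx\leq2|X|\leq2\varepsilon$. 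The point is that no matter how far out the excess migrates or how long its filaments grow, it contributes only a bounded amount, while near the origin—where the weight is large—there is no excess at all. This perimeter-free, singularity-tolerant estimate is the main obstacle; once it is in hand, the three displays above close the argument.
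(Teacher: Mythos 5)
Your proposal is correct, and its core mechanism is genuinely different from the paper's. Both arguments share the same skeleton: reduce \eqref{eq:no_high_jump} to a bound on $|\bar I_{\Theta(t)}-\bar I_{\Theta(t')}|$ via \eqref{est_bar_i_alp}, then pay $2\varepsilon$ through the triangle inequality to reduce everything to bounding $|I(t)-I(t')|$. The divergence is in that last step. The paper actually does what you dismiss as ``useless'': it writes $|I(t)-I(t')|\le \nrm{\omega(t)-\omega(t')}_{L^1}=2|A(t')\setminus A(t)|$ --- but it then bounds this symmetric difference in a perimeter-free way by a Lagrangian argument: up to measure $2\varepsilon$, $A(t)$ sits inside $\bar A_{\Theta(t)}$, the flow displaces points by at most $C_2|t-t'|$, so $A(t')$ lies (up to measure $2\varepsilon$) in the $C_2|t-t'|$-collar of the \emph{smooth Kelvin set}, whose area is $\le C_1C_2|t-t'|$ by Lemma \ref{lem:C2}; the perimeter of $A(t)$ never enters. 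Your route instead differentiates the moment $I(t)$ via the transport equation and incompressibility, getting the bulk bound $|\frac{d}{dt}I(t)|\le m\nrm{u}_{L^\infty}\int_{A(t)}|x|^{-1}dx$, and controls the weight integral from the stability hypothesis: the Kelvin set contributes $O(1)$ by rotational invariance of the weight, and the excess of measure $\le\varepsilon$ lives in $\{|x|\ge 1-C\beta\}$ where the weight is $\le 2$. Both proofs hinge on the same structural fact --- proximity to the smooth Kelvin set is what makes the estimate perimeter-independent --- but they exploit it differently: the paper through the collar area estimate of Lemma \ref{lem:C2} (which needs the $C^1$ bound \eqref{est_v_st_der}), you through the cruder inclusion $\bar A\supset B_{1-C\beta}$. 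Your version is more self-contained for this lemma and insensitive to where the excess mass migrates; the paper's Lagrangian collar argument is not wasted effort, though, since it is precisely the prototype of the refined trajectory estimate \eqref{pill} used later in the proof of Proposition \ref{thm:refined}. One technical remark: your differentiation through the singular weight is legitimate, but the cleanest justification is to insert a radial cutoff $\chi_\delta$ vanishing on $B_\delta$ and equal to $1$ off $B_{2\delta}$, observe that $|\nabla(\chi_\delta e^{im\theta})|\lesssim |x|^{-1}$ uniformly in $\delta$, apply the weak (Yudovich) formulation to the test function $\chi_\delta e^{im\theta}$, and let $\delta\to0$ using $|I_\delta(t)-I(t)|\le 4\pi\delta^2$; this sidesteps the null-set claim about trajectories hitting the origin (which is true on the short time windows at issue, where the Yudovich flow is H\"older of exponent close to $1$, but would itself require an argument).
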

\begin{proof}
Let $\tilde{\beta}\in(0,\bar \beta]$ be sufficiently small to satisfy 
all the estimates in   \S \ref{subsec:fil_pre}, where $\bar \beta>0$ comes from Proposition \ref{thm:stable}, and consider $\beta\in(0,\tilde{\beta}]$.
 For a simple presentation, we denote
 $$\Theta=\Theta(t), \quad \Theta'=\Theta(t'), \quad\omega=\omega(t),\quad \omega'=\omega(t').$$
\begin{enumerate}

\item 

We first remark that
$$ \bar I_{\mathcal{T}[\Theta-\Theta']} = \bar I_{\Theta-\Theta'}\quad\mbox{and}\quad
|\bar I - \bar I_{\Theta-\Theta'}|=
|\bar I_{\Theta} - \bar I_{\Theta'}|
$$ so that
 the conclusion \eqref{eq:no_high_jump} follows once we prove
\begin{equation*}
|\bar I_{\Theta} - \bar I_{\Theta'}|\leq  c_3\beta \tilde{K}\cdot \varepsilon^{1/2}
\end{equation*} thanks to \eqref{est_bar_i_alp}  in Lemma \ref{lem:v_st} ($c_3>0$ is the constant from the lemma).
\item We begin the estimate
\begin{equation}\label{est_i_dif}\begin{split}
|\bar I_{\Theta} - \bar I_{\Theta'}|\leq 
|\bar I_{\Theta} -I(t)|+|I(t)-I(t')|+|I(t') - \bar I_{\Theta'}|.
\end{split}\end{equation}
By the stability assumption \eqref{ass_st_lem}, we estimate the first term by
\begin{equation*}\begin{split}
 |\bar I_{\Theta} -I(t)|\leq \int_{\mathbb{R}^2}|\bar{\omega}_{\Theta}-\omega|dx\leq \varepsilon.
\end{split}\end{equation*} Similarly, 
$ |\bar I_{\Theta'} -I(t')|\leq  \varepsilon.$
For the term in the middle, we estimate
\begin{equation*}\begin{split}
 |I(t)-I(t')| \leq \|\omega-\omega'\|_{L^1}=|A(t')\triangle A(t)|=2|A(t')\setminus A(t)|,
\end{split}\end{equation*} where $|\cdot|$ is the Lebesgue measure in $\mathbb{R}^2$. Then, for the particle trajectory map $\phi$ \eqref{trajec_pertu} from the solution $\omega(t)$, we note
\begin{equation*}\begin{split}
A(t')=\phi(t',(t,A(t))),\quad  A(t)=(\bar A_{\Theta} \cap A(t))\cup( A(t)\setminus\bar A_{\Theta} ),
\end{split}\end{equation*}
$$
 |\bar A_{\Theta} \setminus A(t)|\leq \|\bar\omega_\Theta-\omega(t)\|_{L^1}\leq \varepsilon,
$$
 and
$$
|\phi(t',(t,(A(t)\setminus\bar A_{\Theta})))|=|A(t)\setminus\bar A_{\Theta}| \leq \|\bar\omega_\Theta-\omega(t)\|_{L^1}\leq \varepsilon.
$$
Thus, we can estimate 
\begin{equation}\label{est_a'_a}\begin{split}
|A(t')\setminus A(t)|&\leq |A(t')\setminus\bar A_{\Theta} |+|(\bar A_{\Theta} \setminus A(t))|\\
&\leq 
 |\phi(t',(t,(\bar A_{\Theta} \cap A(t))))\setminus \bar A_{\Theta} |+ |\phi(t',(t, ( A(t)\setminus \bar A_{\Theta})) |
+\varepsilon\\
&\leq 
 |\phi(t',(t,\bar A_{\Theta})  )\setminus\bar A_{\Theta} |+ 2\varepsilon.
\end{split}\end{equation} 
\item
We recall the flow speed is    uniformly bounded  for all time:
$$\sup_{t\geq 0}\|u(t)\|_{L^\infty}\leq  C\sup_{t\geq 0}\|\omega(t)\|^{1/2}_{L^1}\|\omega(t)\|_{L^\infty}^{1/2}\leq C_2<\infty$$ for some  $C_2>0$.
Now we take  $\tilde C:=(2C_2C_1)^{-1}$ and $\tilde{K}:=7/c_3.$ 
 
As a consequence of the previous step, we get
$$\phi(t',(t,\bar A_{\Theta})) \subset \bar A_{\Theta}^{C_2|t-t'|},
$$ which gives, from \eqref{est_a'_a} and from \eqref{est_bar_eta}, 
\begin{equation*} \begin{split}
|A(t')\setminus A(t)| 
&\leq 
 | \bar A_{\Theta}^{C_2 |t-t'|}\setminus\bar A_{\Theta} |+ 2\varepsilon\leq C_1 C_2|t'-t|+2\varepsilon.
\end{split}\end{equation*} 
Thus, from \eqref{est_i_dif}, for $|t-t'|\leq \tilde C\varepsilon$,
\begin{equation*}\begin{split}
|\bar I_{\Theta} - \bar I_{\Theta'}| &\leq 
 |I(t)-I(t')|+2\varepsilon\leq 2|A(t')\setminus A(t)| +2\varepsilon\\&
 \leq 2 C_2C_1\tilde C \varepsilon +6\varepsilon=7\varepsilon\leq 7\sqrt\varepsilon\beta\leq  c_3\beta \tilde K \varepsilon^{1/2}.
\end{split}\end{equation*} 
 
\end{enumerate}
 This finishes the proof.  
 \end{proof}
 \subsection{Proof of Proposition \ref{thm:refined}}
 
Now we will prove \textit{refined stability} (Proposition \ref{thm:refined}) using \textit{orbital stability} (Proposition \ref{thm:stable}) and Lemma \ref{lem:no_high_jump}.
\begin{proof}[Proof of Proposition \ref{thm:refined}]
We prove the result by a bootstrap argument.

\begin{enumerate}
\item  We set  $\beta_0=\min\{\tilde{\beta},\bar\beta\}$ where $\tilde{\beta},\bar\beta>0$ are the constants from 
Lemma \ref{lem:no_high_jump} and Proposition \ref{thm:stable}, respectively.
We take $c_0\in(0,1]$, which will be chosen sufficiently small during the proof (see \eqref{defn_c_0}).
 For 
$\beta\in(0,\beta_0]$, 
we consider a $m$-fold symmetric solution $\omega(t)=\mathbf{1}_{A(t)}$ with a function $\Theta:[0,T)\to\mathbb{R}$ 
\begin{equation}\label{ass_st_proof}
\sup_{t\in[0,T)}\|\omega(t)-\bar{\omega}_{\Theta(t)}\|_{L^1(\mathbb{R}^2)}\leq \varepsilon
\end{equation} for some $0<T\leq\infty$ and 
for some  $\varepsilon\in(0,c_0\beta^2]$.  

\item Fix any $t_0\in[0,T)$. We will find some constants ${c_0}, C_0>0$, which are independent of the choice of $t_0$, satisfying the following property:

	\noindent \textbf{Goal}. For all $t \in [t_0,t_0+{c_0\beta} ]\cap [0,T)$,
   \begin{equation} \label{goal_boot} 
		\begin{split}
		|\mathcal{T}[\Theta(t ) - (\Theta(t_0) + \Omega(t-t_0))  ] | \leq C_0\cdot \varepsilon^{1/2} .
		\end{split}
	\end{equation} 
For the rest of the proof, every time variable is assumed to be on $[0,T)$.

	
	
	\item First, we prove the following claim:\\
		\indent  \textbf{Initial claim}. 
There exists	 a constant  $\eta=\eta(\varepsilon)>0$  such that,  for any $t\in[t_0,t_0+\eta]$, we have
	\begin{equation} \label{init_boot} 
		\begin{split}
		|\mathcal{T}[\Theta(t ) - (\Theta(t_0) + \Omega(t-t_0)) ]  | \leq \frac 1 2 C_0\cdot \varepsilon^{1/2}.
		\end{split}
	\end{equation} 
	
	This estimate \eqref{init_boot}  directly follows from Lemma \ref{lem:no_high_jump}. Indeed, the lemma implies that
		\begin{equation*}  
		\begin{split}
	|\mathcal{T}[\Theta(t ) - (\Theta(t_0) + \Omega(t-t_0))  ] | &\leq
	 			|\mathcal{T}[\Theta(t ) -\Theta(t_0)]| + |\mathcal{T}[\Omega(t-t_0)]   | \\ &\leq \tilde{K}\varepsilon^{1/2}+ 2\Omega^*|t-t_0|\quad\mbox{whenever}\quad |t-t_0|\leq \tilde C \varepsilon,
		\end{split}
	\end{equation*} 
	  where $\tilde C, \tilde K$ come from Lemma \ref{lem:no_high_jump}.
	We just take any constants $C_0>0$ large  and $\eta=\eta(\varepsilon)>0$ small to satisfy 
	 $$
	 \tilde K\leq \frac 1 4 C_0,\quad \eta\leq \tilde C \epsilon,\quad \mbox{and}\quad 2\Omega^* \eta\leq   \frac{1}{4}C_0 \varepsilon^{1/2}, 
	 $$
	which gives \eqref{init_boot}.

\item

From now on, we may assume that \eqref{goal_boot}  is valid for $t \in [t_0,t^*]$ with some $t^*>t_0$, \textit{i.e.}
\begin{equation}\label{assu_boot_cla}
	|\mathcal{T}[\Theta(t ) - (\Theta(t_0) + \Omega(t-t_0))]   | \leq C_0\cdot \varepsilon^{1/2},\quad \forall t\in[t_0,t^*]. 
\end{equation}
 The existence of such a moment $t^*>t_0$ is guaranteed by \textbf{Initial claim} \eqref{init_boot}.  We shall prove the following \textit{bootstrap} claim: \ \\

\indent  \textbf{Bootstrap claim}.   There exists a small  constant ${c_0}>0$   such that if
\eqref{assu_boot_cla} holds for some 
 $t^* \leq  t_0 + {c_0\beta} 
$, 
 then    we have for all $t\in[t_0,t^*]$,
\begin{equation} \label{hyp_boot}
		\begin{split}
		|\mathcal{T}[\Theta(t ) - (\Theta(t_0) + \Omega(t-t_0)) ]  | \leq \frac 1 2 C_0\cdot \varepsilon^{1/2} .		\end{split}
	\end{equation}   We note that the coefficient of $\sqrt{\varepsilon}$ in \eqref{assu_boot_cla} is $C_0$ while that  in \eqref{hyp_boot} is $(1/2)C_0$.

	 \item
Before proving 
\eqref{hyp_boot}, we will perform a refined estimate for the  trajectory map  $\phi$ \eqref{trajec_pertu} from the solution $\omega(t)$.  First, we set the  constant  $\gamma_0>0$ by
\begin{equation}\label{defn_gam}
		\gamma_0:= \left(\frac 1 2c_3  \frac 1 2 C_0\right)/(2C_1)>0,
\end{equation} 
where $C_1>0$ is the constant from    \eqref{est_bar_eta} of Lemma \ref{lem:C2}. 
 Then we claim, for any $t\in[t_0,t^*]\subset[t_0,t_0+{c_0\beta}]$,
	  	 \begin{equation} \label{pill}
		\begin{split}
\phi(t,(t_0,\bar A_{\Theta(t_0)}))\subset 
\bar A_{\Theta(t_0)+\Omega(t-t_0)}^{{\gamma_0\cdot\beta}\cdot\varepsilon^{1/2}}.
		\end{split}
	\end{equation}    	 Here,   the superscript to a set $\bar A_\alp$ represents the neighborhood of the set (see \eqref{defn_eta}).
	
	\item To prove \eqref{pill}, we fix
any $x_0\in \bar A_{\Theta(t_0)}$ and
consider
\begin{equation}\label{defn_psi} 
 \psi(t):=\bar \phi((\Theta(t_0)/\Omega)+(t-t_0),(\Theta(t_0)/\Omega,x_0)), 
\end{equation}
where
 $\bar\phi$ is the trajectory map  \eqref{defn_barpsi} from  the Kelvin wave the solution $\bar\omega(t)= \mathbf{1}_{\bar A_{\Omega t}}$. Then $\psi$ defined in \eqref{defn_psi} satisfies $\psi(t_0)=x_0$ and \begin{equation*}
	\begin{split}\frac{d}{dt}\psi(t)
&=\bar u((\Theta(t_0)/\Omega)+(t-t_0),\psi(t)). \end{split}
\end{equation*} As a result, we observe 
\begin{equation}\label{psi_same}
 \psi(t)\in \bar A_{\Theta(t_0)+\Omega(t-t_0)}\quad\mbox{for any} \quad  t\geq t_0.
\end{equation}
 By denoting $\phi(t):=\phi(t,(t_0,x_0)),$  
we just need to show 
$$
|\phi(t)-\psi(t)|\leq \gamma_0\cdot\beta\cdot\varepsilon^{1/2}.
$$  
\item First we decompose
\begin{equation}\label{decomp_123}
\begin{split}
	\frac{d}{dt}\left( \phi(t)  - \psi(t) \right)  &= u(t,\phi(t) ) - \bar u((\Theta(t_0)/\Omega)+(t-t_0), \psi(t))\\&=u(t,\phi(t) )- \bar u(\Theta(t)/\Omega,\phi(t))\\
	 &\quad +
\bar u(\Theta(t)/\Omega,\phi(t))-\bar u(\Theta(t_0)/\Omega+(t-t_0),\phi(t))	 \\
&\quad +\bar u(\Theta(t_0)/\Omega+(t-t_0),\phi(t))	     - \bar u((\Theta(t_0)/\Omega)+(t-t_0), \psi(t))\\
& =:I(t)+II(t)+III(t).
\end{split}
\end{equation}
From the stability assumption  \eqref{ass_st_proof},  we have 
\begin{equation*}
	\begin{split}
	|I(t)|&\leq \hat C\varepsilon^{1/2}.
	\end{split}
\end{equation*}  
For $II(t)$, 
we first find $\tilde\Theta(t)\in\mathbb{R}$ satisfying
\begin{equation}\label{defn_til_the}
 \tilde\Theta(t)= \Theta(t)+\frac{2\pi}{m}\cdot k\quad\mbox{for some integer}\quad k \quad\mbox{and}\quad
\left(\tilde\Theta(t ) - (\Theta(t_0) + \Omega(t-t_0)) \right) \in\mathbb{T}.
\end{equation}
We observe that
$\bar u=u^{m,\beta}$ is time-periodic of period
$
\frac{2\pi}{m\Omega}
$
 and is
 Lipschitz  (in space-time) of $\bar u=u^{m,\beta}$ where the Lipschitz norm is uniformly bounded when $\beta>0$ is sufficiently small. Let's denote the Lipschitz constant by $C_{Lip}=C_{Lip}(m)>0$. Then we get
\begin{equation*}
	\begin{split}
 	|II(t)|&=
|\bar u( \tilde\Theta(t)/\Omega,\phi(t))-\bar u(\Theta(t_0)/\Omega+(t-t_0),\phi(t))	|
\leq \frac{C_{Lip}}{\Omega} |\tilde\Theta(t ) - (\Theta(t_0) + \Omega(t-t_0))   | 
\\
&= \frac{C_{Lip}}{\Omega} |\mathcal{T}[\tilde\Theta(t ) - (\Theta(t_0) + \Omega(t-t_0))   ]| = \frac{C_{Lip}}{\Omega} |\mathcal{T}[ \Theta(t ) - (\Theta(t_0) + \Omega(t-t_0))   ]| \\
&\leq 2\frac{C_{Lip}}{\Omega^*}\cdot \left(   C_0\cdot \varepsilon^{1/2} \right),
	\end{split}
\end{equation*} where we used the assumption \eqref{assu_boot_cla} in the last inequality.   
For $III(t)$, we simply have
$$
|III(t)|\leq C_{Lip}|\phi(t)-\psi(t)|.
$$
Thus we have, for $t\in[t_0,t^*]$, \begin{equation*}
\begin{split}
	\frac{d}{dt}|\phi(t) - \psi(t)|    
	& \le C_{Lip}|\phi(t)-\psi(t)| + C_3 \varep^{1/2},
\end{split}
\end{equation*} where $C_3>0$ is some constant (depending only on $m$).
With Gronwall's inequality, we deduce when $t\in[t_0,t^*]\subset [t_0,t_0+{c_0\beta} ]$ that \begin{equation*}
\begin{split}
	|\phi(t)-\psi(t)|& \le e^{C_{Lip}{c_0\beta}}\cdot\int_{t_0}^{t_0+{c_0\beta}}    
C_3 \varep^{1/2} ds \le   \left( e^{ C_{Lip}{c_0\beta}}C_3\right)\cdot {c_0\beta}\cdot \varep^{1/2}.  
\end{split}
\end{equation*} 
We just take a small constant ${c_0}>0$ satisfying
  \begin{equation*}\label{defn_alp_0}
		 \left( e^{ C_{Lip}{c_0\beta_0} }C_3\right)\cdot {c_0} \leq \gamma_0
\end{equation*} (see  \eqref{defn_gam} for $\gamma_0$), which gives
\begin{equation*}
\begin{split}
	|\phi(t)-\psi(t)|& \le  \gamma_0\cdot { \beta}\cdot \varep^{1/2}.  
\end{split}
\end{equation*} 

Thanks to \eqref{psi_same}, we have proved \eqref{pill} for any $t\in[t_0,t^*]\subset[t_0, t_0+{c_0\beta}]$.
	  Now we are ready to show the \textbf{Bootstrap claim} \eqref{hyp_boot} for 
	 $t\in[t_0,t^*]\subset[t_0, t_0+{c_0\beta}]$.
\item To prove, we simply
	denote   $$ E_t:=\phi(t,(t_0,\bar A_{\Theta(t_0)})),\quad  t\geq t_0,
	$$ For instance, we observe $
	E_{t_0}=\phi(t_0,(t_0,\bar A_{\Theta(t_0)}))=\bar A_{\Theta(t_0)},
	$ and \eqref{pill} gives 
$$
E_{t}\subset 
\bar A_{\Theta(t_0)+\Omega(t-t_0)}^{{\gamma_0\cdot\beta}\varepsilon^{1/2}}, \quad \forall t\in[t_0,t^*]\subset [t_0,t_0+c_0\beta].$$

Towards	a contradiction, suppose that the property \eqref{hyp_boot} on the interval $[t_0,t^*]\subset [t_0,t_0+{c_0\beta}]$ fails, \textit{i.e.} 
\begin{equation}\label{hyp_contra}  
		\begin{split}
		\mbox{there is some}\quad t'\in[t_0,t^*]\quad \mbox{satisfying}\quad		|\mathcal{T}[\Theta(t' ) - (\Theta(t_0) + \Omega(t'-t_0))]   | > \frac 1 2 C_0\cdot \varepsilon^{1/2}. 	 		\end{split}
	\end{equation}
	   From now one, we will show $$\|\omega(t')-\bar \omega_{\Theta(t')}\|_{L^1}\geq 2\varepsilon,$$ which gives a contradiction to the stability assumption \eqref{ass_st_proof}.
We begin the estimate with
 \begin{equation*} \label{decomp}
		\begin{split}
		\|\omega(t')-\bar \omega_{\Theta(t')}\|_{L^1}&\geq 
	\|\omega(t') \mathbf{1}_{E_{t'}}-\bar \omega_{\Theta(t')}\|_{L^1}-\|\omega(t') \mathbf{1}_{(E_{t'})^c}\|_{L^1}=:I(t')-II(t').
 		\end{split}
	\end{equation*}	
	For $II(t')$, we estimate
	 \begin{equation*} 
		\begin{split}
 \|\omega(t') \mathbf{1}_{(E_{t'})^c}\|_{L^1}=\|\omega(t_0) \mathbf{1}_{(E_{t_0})^c}\|_{L^1}=\|\omega(t_0)-\bar\omega_{\Theta(t_0)}\|_{L^1({(E_{t_0})^c})}\leq
 \|\omega(t_0)-\bar\omega_{\Theta(t_0)}\|_{L^1 }
 \leq \varepsilon.
 		\end{split}
	\end{equation*}	
	For $I(t')$, we have
 \begin{equation*}
		\begin{split}
		 	\|\omega(t') \mathbf{1}_{E_{t'}}-\bar \omega_{\Theta(t')}\|_{L^1}  
	&\geq 
	\| \bar \omega_{\Theta(t')}\|_{L^1({(E_{t'})^c})}  \geq  
	\| \bar \omega_{\Theta(t')}\|_{L^1({\left(
\bar A_{\Theta(t_0)+\Omega(t'-t_0)}^{{\gamma_0\cdot\beta}\varepsilon^{1/2}}\right)^c})} \\
	&\geq
		\| \bar \omega_{\Theta(t')}\|_{L^1({\left(
\bar A_{\Theta(t_0)+\Omega(t'-t_0)}\right)^c})}-|
\bar A_{\Theta(t_0)+\Omega(t'-t_0)}^{{\gamma_0\cdot\beta}\varepsilon^{1/2}}\setminus 
\bar A_{\Theta(t_0)+\Omega(t'-t_0)} 
|. 
 		\end{split}
	\end{equation*}	 Then,   by using \eqref{est_bar_eta},    we continue to estimate
	\begin{equation*}
		\begin{split}
		 	\dots 
	&\geq 
|\bar A_{\Theta(t')}\setminus\bar A_{\Theta(t_0)+\Omega(t'-t_0)} |
-C_1\cdot {\gamma_0\cdot\beta}\varepsilon^{1/2}
 \\
&=\frac 1 2 
|\bar A_{\Theta(t')}\triangle\bar A_{\Theta(t_0)+\Omega(t'-t_0)} |
-C_1\cdot {\gamma_0\cdot\beta}\varepsilon^{1/2}
 \\
&\geq \frac 1 2 |\bar I_{\Theta(t')-(\Theta(t_0)+\Omega(t'-t_0))}-\bar I|
-C_1\cdot {\gamma_0\cdot\beta}\varepsilon^{1/2}.
 		\end{split}
	\end{equation*}	
Now we can use \eqref{est_bar_i_alp} of  Lemma \ref{lem:v_st} to get
	\begin{equation*}
		\begin{split}
		 	\dots
		 	&\geq \frac 1 2c_3\beta |\mathcal{T}[{\Theta(t')-(\Theta(t_0)+\Omega(t'-t_0))}]|
-C_1\cdot {\gamma_0\cdot\beta}\varepsilon^{1/2} \geq \frac 1 2c_3\beta \frac 1 2 C_0\varepsilon^{1/2}
-C_1\cdot {\gamma_0\cdot\beta}\varepsilon^{1/2},
 		\end{split}
	\end{equation*}	 where we used the hypothesis \eqref{hyp_contra}  in the last inequality.
	Thanks to the definition of $\gamma_0$ in \eqref{defn_gam},
	we have obtained
		$$
			\dots\geq \frac 1 4c_3\beta \frac 1 2 C_0\varepsilon^{1/2},
	$$ which gives 
	$$
			\|\omega(t')-\bar \omega_{\Theta(t')}\|_{L^1}\geq \frac 1 4c_3\beta \frac 1 2 C_0\varepsilon^{1/2}
-\varepsilon.
	$$
	We make $c_0>0$ smaller than before (if necessary)  to satisfy 
\begin{equation}\label{defn_c_0}
3\sqrt{c_0} \leq \frac 1 4c_3  \frac 1 2 C_0. 
\end{equation}  
By this choice of $c_0>0$,
we get,
whenever $0<\varepsilon\leq c_0\beta^2$,
	$$
			\|\omega(t')-\bar \omega_{\Theta(t')}\|_{L^1}\geq 2\varepsilon,
	$$	which is a contradiction to  \eqref{ass_st_proof}. Hence, the hypothesis
\eqref{hyp_contra} cannot be true, which implies that	
	we have proved \textbf{Bootstrap claim} \eqref{hyp_boot} for $[t_0,t^*]\subset [t_0,t_0+{c_0\beta}]$. 

\item Lastly, we are ready to show 
\textbf{Goal} \eqref{goal_boot} for any $t\in[t_0,t_0+{c_0\beta}]$ since we can extend the interval satisfying \textbf{Goal}  \eqref{goal_boot}  by applying \textbf{Bootstrap claim} \eqref{hyp_boot}  with \textbf{Initial claim} \eqref{init_boot}. Indeed, we know that there is $t^*\in(t_0,t_0+{c_0\beta})$ such that \textbf{Goal} \eqref{goal_boot} on the interval $[t_0,t^*]$ holds by using 
 \textbf{Initial claim} \eqref{init_boot}. Then by  applying \textbf{Bootstrap claim} \eqref{hyp_boot}  on the interval $[t_0,t^*]$, we get
 $$	|\mathcal{T}[\Theta(t ) - (\Theta(t_0) + \Omega(t-t_0)) ]  | \leq \frac 1 2 C_0\cdot \varepsilon^{1/2},\quad\forall t\in[t_0,t^*]. 	$$ Then we use \textbf{Initial claim} \eqref{init_boot} by replacing $t_0$ with $t^*$ so that we  get, for any $t\in[t^*,t^*+\eta]$,
  $$	|\mathcal{T}[\Theta(t ) - (\Theta(t^*) + \Omega(t-t^*))  ] | \leq \frac 1 2 C_0\cdot \varepsilon^{1/2}.$$
 By adding the above two estimates, we get, 
 for any $t\in[t^*,t^*+\eta]$,
  $$	|\mathcal{T}[\Theta(t ) - (\Theta(t_0) + \Omega(t-t_0))  ] | \leq   C_0\cdot \varepsilon^{1/2}.$$ In short, we have obtained  \textbf{Goal} on the extended interval $[t_0,t^*+\eta]$.
By repeating this process, we can get \textbf{Goal} on $[t_0,t^*+n\eta]$ for each $n\geq1$
     until the process eventually covers $[t_0,t_0+{c_0\beta}]$.
 \qedhere  \end{enumerate} 
\end{proof}
\begin{remark}\label{rem:sum_est} 
In Proposition \ref{thm:stable}, we can always take $\Theta(0)=0$ (by assuming $\delta\leq \varepsilon$ if necessary). Then summing the estimate \eqref{est_refined} of Proposition \ref{thm:refined} gives 
\begin{equation*}
			|\mathcal{T}[\Theta(t )- \Omega t]    | \leq C_0\cdot \varepsilon^{1/2}\left(\frac t{{c_0\beta}}+1\right)\quad
			  \mbox{for all}\quad   t\in[0,T).
			 	\end{equation*}

\end{remark}
 \subsection{Proof of Theorem \ref{cor:finite_time}}
 Here we will prove \textit{finite time stability} (Theorem \ref{cor:finite_time}) by using \textit{orbital stability} (Proposition \ref{thm:stable}) with \textit{refined stability} (Proposition \ref{thm:refined}).
For $\beta>0$, we denote  
\begin{equation}\label{defn_m}
\bar m=\bar m(\beta):=\sup_{\theta\in\mathbb{T}}(1+g(\theta))>0.
\end{equation}
The next lemma says that when $\beta>0$ is small enough, the trajectories induced from the Kelvin wave starting  near the wave remain close. 
 \begin{lemma}\label{lem:eta}
(I) 
 For  each $\tau>0$, there exist $\beta'>0$ and $\mu>0$  such that
 if $\beta\in[0,\beta']$, then 
		$$\bigcup_{t\geq 0}\bar\phi(t,(0,\bar A^{\mu}))\subset 	 B_{\bar m +\tau},	$$
		where   $\bar\phi$ is the trajectory of $\bar\omega(t)=\mathbf{1}_{\bar A_{\Omega t}}$ as in \eqref{defn_barpsi}, and $\bar A^{\mu}:=\{\dist(x,\bar A)< \mu\}$ as in \eqref{defn_eta}.\\
(II) There exists $\kappa \in(0,1)$ such that for   any $\tau>0$, there exists $\beta'>0$   such that
if $\beta\in[0,\beta']$, then
 for any $x\in\mathbb{R}^2$ with  $1/2\leq |x|\leq 1+\kappa$, we get, for any $t\geq 0$, 
  		$$ \bar\phi(t,(0,x))\in \left( 	 { B_{|x|+\tau}}\setminus \overline{B_{|x|-\tau}}\right).$$
 
 \end{lemma}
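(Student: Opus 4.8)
The plan is to work in the frame rotating with angular velocity $\Omega$, in which the Kelvin wave becomes a steady state and the relative stream function $\psi^{m,\beta}$ from \eqref{eq:psi-mbt} is conserved along particle trajectories; this is exactly what makes the bounds hold for \emph{all} $t\ge0$ rather than only on finite intervals. Writing $\tilde\phi(t)=\mathrm R_{-\Omega t}[\bar\phi(t,(0,x))]$ for the trajectory in the rotating frame, the rotating-frame velocity field is tangent to the level sets of $\psi^{m,\beta}$ (it equals $\nabla^\perp\psi^{m,\beta}$ up to sign), so $\psi^{m,\beta}(\tilde\phi(t))\equiv\psi^{m,\beta}(x)$ and $\tilde\phi(t)$ is confined to a single connected component of a level set of $\psi^{m,\beta}$. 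Since rotations preserve the modulus, $|\bar\phi(t,(0,x))|=|\tilde\phi(t)|$, so it suffices to understand the level sets of $\psi^{m,\beta}$. Throughout I use the fact, recorded before Lemma \ref{lem:reduction-ImFT}, that $\psi^{m,\beta}\to\psi^{m,0}$ in $C^{1,\alpha}$ as $\beta\to0$, where $\psi^{m,0}(r)=-\tfrac12\ln r+\tfrac12(\tfrac12-\tfrac1{2m})(r^2-1)$ is radial.

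For part (II) I first fix $\kappa$. A direct computation gives $(\psi^{m,0})'(r)=-\tfrac1{2r}+2ar$ with $a=\tfrac{m-1}{4m}$, which vanishes only at $r_{\min}=\sqrt{m/(m-1)}>1$ and is strictly negative on $(0,r_{\min})$. Hence I take any $\kappa\in(0,1)$ with $1+2\kappa<r_{\min}$; then $\psi^{m,0}$ is strictly monotone on $[\tfrac14,1+2\kappa]$ with $|(\psi^{m,0})'|\ge c_0>0$ there, where $c_0$ depends only on $m$. Given $\tau>0$ (which I may take as small as I like), suppose toward a contradiction that the trajectory of some $x$ with $1/2\le|x|\le1+\kappa$ first reaches radius $|x|\pm\tau$ at a time $t_1$; for $\tau$ small the arc up to $t_1$ stays in the nondegenerate annulus $[\tfrac14,1+2\kappa]$. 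Using conservation and the radiality of $\psi^{m,0}$,
\[
0=\bigl|\psi^{m,\beta}(\tilde\phi(t_1))-\psi^{m,\beta}(x)\bigr|\ge\bigl|\psi^{m,0}(|x|\pm\tau)-\psi^{m,0}(|x|)\bigr|-2\|\psi^{m,\beta}-\psi^{m,0}\|_{C^0}\ge c_0\tau-2\epsilon_\beta,
\]
where $\epsilon_\beta\to0$. Choosing $\beta'$ small so that $2\epsilon_\beta<c_0\tau$ yields a contradiction, so the radius never leaves $(|x|-\tau,|x|+\tau)$, which is (II).

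For part (I) the same conservation reduces matters to locating the right component of a superlevel set of $\psi^{m,\beta}$. By the explicit profile, $\psi^{m,0}$ decreases from $+\infty$ through $0$ at $r=1$ to its minimum at $r_{\min}$, then increases through $0$ at $r^*$ and up to $+\infty$; in particular $\psi^{m,0}(r_{\min})<0$ is a definite negative constant. A gradient bound near $\partial\bar A$ gives $\psi^{m,\beta}\ge -C\mu$ on the collar $\bar A^\mu$, so each trajectory starting in $\bar A^\mu$ lies on a level set $\{\psi^{m,\beta}=c\}$ with $c\ge -C\mu$. For $\mu$ small enough that $C\mu<\tfrac12|\psi^{m,0}(r_{\min})|$, the circle $\{r=r_{\min}\}$ lies in $\{\psi^{m,\beta}<c\}$ for all such $c$ once $\beta$ is small (by $C^0$ closeness), so it separates the inner component containing $\bar A^\mu$ from the unbounded outer region where $\psi^{m,\beta}\to+\infty$; thus each trajectory stays in the bounded inner component. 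Finally, the inner component of $\{\psi^{m,0}\ge -C\mu\}$ is the disc $\{r\le r_1(C\mu)\}$ with $r_1(C\mu)\to1$ as $\mu\to0$, and by $C^1$ closeness the inner component of $\{\psi^{m,\beta}\ge -C\mu\}$ is contained in $\{r<1+\tau\}\subset B_{\bar m+\tau}$ for $\mu,\beta$ small, since $\bar m\ge1$. This gives (I).

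The main obstacle is precisely the requirement that the bound be uniform over $t\in[0,\infty)$: integrating the (small, $O(\beta)$) radial component of $\bar u$ directly would only control finite times, since the error could accumulate. The decisive idea is therefore to pass to the rotating frame and exploit the \emph{exact} conservation of $\psi^{m,\beta}$, reducing both statements to the geometry of its level sets; the remaining work is the elementary but necessary verification of the nondegeneracy of $\psi^{m,0}$ on $[\tfrac14,1+2\kappa]$ (for (II)) and of the topological separation furnished by $\{r=r_{\min}\}$ (for (I)).
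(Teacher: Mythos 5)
Your proof is correct, and for part (II) it is essentially the paper's own argument: both pass to the rotating frame where the relative stream function $\psi^{m,\beta}$ is conserved along trajectories, invoke the uniform convergence $\psi^{m,\beta}\to\psi^{m,0}$, and use that $-\partial_r\psi^{m,0}$ has a positive lower bound on an annulus $[1/4,1+2\kappa]$ (the paper uses $[1/3,1+2\kappa]$) to trap the level-set component through $x$ in a thin annulus; your first-crossing-time contradiction is just a rephrasing of the paper's comparison of $\sup_{|y|=|x|+\tau}\bar\psi$ and $\inf_{|y|=|x|-\tau}\bar\psi$ with $\bar\psi(x)$. Where you genuinely diverge is part (I): the paper dismisses it with ``the first statement just follows from the second,'' which, taken literally, still requires an extra step for starting points with $|x|<1/2$ (for instance, that the flow map is a homeomorphism fixing the origin, so the image of $B_{1/2}$ is confined by the image of its boundary circle, which (II) controls). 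You instead prove (I) directly and uniformly over all of $\bar A^{\mu}$: the collar bound $\psi^{m,\beta}\ge -C\mu$ on $\bar A^{\mu}$, conservation, and the separating circle $\{r=r_{\min}\}$, on which $\psi^{m,\beta}$ lies strictly below $-C\mu$, confine every trajectory to the inner component of the superlevel set, which is an $O(\mu+\epsilon_\beta)$-perturbation of the unit disc. This self-contained route fills in exactly the detail the paper glosses over, at the cost of a slightly longer argument. One harmless slip: the formula $(\psi^{m,0})'(r)=-\tfrac{1}{2r}+2ar$ is valid only for $r\ge 1$; inside the disc the correct derivative is $-r/(2m)$, so $\psi^{m,0}(0)=\tfrac{1}{4m}$ is finite rather than $+\infty$. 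The facts you actually use --- strict monotonicity of $\psi^{m,0}$ on $(0,r_{\min})$ with a uniform negative derivative bound on $[1/4,1+2\kappa]$, and a strictly negative minimum value at $r_{\min}$ --- remain true for the correct piecewise profile, so nothing in your argument breaks.
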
 
 \begin{proof}
 The first statement just follows from  the second statement. The second statement simply follows the facts that 
  $$\psi^{m,\beta}\to \psi^{m,\beta}|_{\beta=0}\quad\mbox{in}\quad C^{0}\mbox{-norm on} \quad  \overline{B_2} \quad \mbox{as}\quad \beta\to 0
  $$(\textit{e.g.} see   \cite{HMV}),
where $\psi^{m,\beta}$ is the relative stream defined in \eqref{eq:psi-mbt},
and that all the level sets of $\psi^{m,\beta}|_{\beta=0}$ are circles.
Indeed, we recall
 $$ -\partial_r\psi^{m,\beta}|_{\beta=0}(r,\theta)=-\partial_r(G \mathbf{1}_{B_1}+\frac 1 2 \frac{m-1}{2m}r^2)=\begin{cases}& \left(\frac 1 2 -\frac{m-1}{2m}\right)r,\quad r\leq 1,\\
 & \left(\frac 1 {2r^2} -\frac{m-1}{2m}\right)r,\quad r>1\end{cases},
$$ which gives  $$\inf_{r\in[1/3,1+2\kappa] }\left(-\partial_r\psi^{m,\beta}|_{\beta=0}(r,\theta)\right)\geq c>0$$ for some $c=c(m)>0$ and  for some small $\kappa=\kappa(m)>0$.
           We consider any small $\tau>0$ such that 
    $[(1/2)-\tau, 1+\kappa+\tau]\subset [1/3,1+2\kappa]\subset[0,2]$.
Denote $$  \bar\psi:=\psi^{m,\beta},\quad  \hat\psi:=\psi^{m,\beta}|_{\beta=0}.$$ 
For any given point 
$x\in\mathbb{R}^2$ satisfying $1/2\leq |x|\leq 1+\kappa$, we take any points $y', y''\in\mathbb{R}^2$ such that $|y'|=|x|-\tau$ and $|y''|=|x|+\tau$.
We observe that $\hat\psi$ is radially symmetric and 
$$\hat\psi(y')-\hat\psi(x)\geq   c\tau,\quad \hat\psi(x)-\hat\psi(y'')\geq   c\tau.$$
Then, by using the uniform convergence $\bar\psi\to\hat\psi$, we can take $\beta>0$ small enough to get 
$$\sup_{|y|=|y'|}|\bar\psi(y)-\hat\psi(y')|\leq \frac{c\tau}{8},\quad \sup_{|y|=|y''|}|\bar\psi(y)-\hat\psi(y'')|\leq \frac{c\tau}{8},
 \quad  |\bar\psi(x)-\hat\psi(x)|\leq \frac{c\tau}{8},
$$
Thus we get
$$
\sup_{|y|=|x|+\tau}\bar\psi(y)<\bar\psi(x)<\inf_{|y|=|x|-\tau}\bar\psi(y),
$$ which implies that
the connected component of the  level set of $\bar\psi$ containing the point $x$ completely  lies on the annulus 
$
{ B_{|x|+\tau}}\setminus \overline{B_{|x|-\tau}}.
$ 
Since the trajectory $\bar\psi(t,(0,x)),\,t\geq 0$ should lie on the connected component  of the  level set of $\bar\psi$ containing the point $x$, we are done. \end{proof}
 To prove 
Theorem \ref{cor:finite_time}, we just need the first statement of the above lemma while the second one  will be used in the next subsection when proving Theorem \ref{thm:instability}. 
 
 \begin{proof}[Proof of Theorem \ref{cor:finite_time}]
\begin{enumerate}
\item We first borrow
the constant $\beta_0>0$  from Proposition \ref{thm:refined}, and consider small $\beta_2\in(0,\beta_0]$ satisfying
\begin{equation}\label{mu_cond2}
\bar{m}(\beta)\leq 1+2\beta<\bar r\quad \mbox{and}\quad
\bar{n}(\beta)\geq 1-2\beta,\quad\forall \beta\in(0,\beta_2],
\end{equation} where $ \bar m=\bar m(\beta)\in(0,\bar r)$ is defined in \eqref{defn_m},	
 where   $\bar r >0 $ is the   constant  required in 
\eqref{cond_evol} for orbital stability of Proposition \ref{thm:stable},
and where  $\bar{n}(\beta)$ is the minimum radius of the Kelvin wave: $\bar n=\bar{n}(\beta):=\inf_{\theta\in\mathbb{T}}(1+g(\theta))>0$. 
Then we    simply set  
\begin{equation}\label{defn_tau}
\tau:=\frac{\bar r -(1+2\beta_2)}{2}>0,
\end{equation}
 	 and take the two constants $\beta'=\beta'(\tau)>0$ and  $\mu=\mu(\tau)>0$ from $(I)$ of Lemma \ref{lem:eta}.
 	 Let $\beta_1>0$ small enough to have
 	 \begin{equation}\label{mu_cond1}
\beta_1\leq\min(\beta_2,\beta')\quad\mbox{and}\quad  2\beta_1\leq \frac{\mu}{2}.
\end{equation}

 We also set 
\begin{equation}\label{defn_r'}
r':=1+\frac{\mu}{2}>1.
\end{equation} We may assume 
\begin{equation}\label{assm_r'_bar_r}
r'<\bar r
\end{equation} (by redefining $\mu>0$ if necessary).
Then,  
	  $(I)$ of Lemma \ref{lem:eta} says that 
	 \begin{equation}\label{conseq_eta}
	 \cup_{t\geq 0}\bar\phi(t,(0,\bar A^{\mu}))\subset B_{\bar m +\tau }.
	 \end{equation}

\item 
 From now on, we fix any $\beta\in(0,\beta_1]$.
Let $T,\varepsilon'>0$.
We define $C_4=C_4(T,\beta)>0$   by
$$ C_4(T,\beta):=C_0\cdot \left(\frac T{{c_0\beta}}+1\right),$$ where $c_0, C_0>0$ are the constants from Proposition \ref{thm:refined}.		
We set $C_5=C_5(T,\beta)>0$  by 
\begin{equation}\label{defn_C5}
 C_5(T,\beta):=\hat C+\frac{2\cdot C_{Lip}}{\Omega^*}\cdot C_4({T,\beta}),
 \end{equation} where $\hat C>0$ comes from \eqref{defn_hat_C}. We  consider any small   $\varepsilon\in(0,c_0\beta^2]$ satisfying
  \begin{equation}\label{est_C5}
	\left( e^{ C_{Lip}T}C_5\right)\cdot T\cdot \varep^{1/2}\leq \frac{\tau}{2}\quad\mbox{and}\quad
 \varepsilon+2c_5C_4\varepsilon^{1/2}\leq \varepsilon',
\end{equation} where $c_5>0$ is the constant of \eqref{est_v_st_der2} in Lemma \ref{lem:v_st}.
 Then,  Proposition \ref{thm:stable} together with  summing the estimate \eqref{est_refined} of Proposition \ref{thm:refined} says that
there is $\delta':=\delta(\beta,\varepsilon)>0$, where 
$\delta(\beta,\varepsilon)$ is the constant from Proposition \ref{thm:stable},
such that
if a $m$-fold symmetric initial data $\omega_0= \mathbf{1}_{A_0}$ satisfies
$$ \|\omega_0-\bar\omega\|_{L^1}\leq \delta' $$
and
 if the corresponding solution $\omega(t)= \mathbf{1}_{A(t)}$ satisfies
\begin{equation}\label{radi_const}
\mbox{\textbf{Range hypothesis for $t'$:}}\quad \cup_{t\in[0,t')}A(t)\subset B_{\bar r}
\end{equation}  for some $t'\in(0,T]$,
     then  there exists a function $\Theta:[0,t')\to\mathbb{T}$ such that 
\begin{equation}\label{ass_st_pf2}
\sup_{t\in[0,t')}\|\omega(t)-\bar{\omega}_{\Theta(t)}\|_{L^1(\mathbb{R}^2)}\leq \varepsilon
\end{equation} and
\begin{equation}\label{est_finite}
			\sup_{t\in[0,t')}|\mathcal{T}[\Theta(t )- \Omega t]    | \leq  C_4\varepsilon^{1/2}. 
			 	\end{equation} 
			 	{(\textit{e.g.} see Remark \ref{rem:sum_est})}.
		 
\item
 From now on, we
  consider
  any $m$-fold symmetric initial data $\omega_0= \mathbf{1}_{A_0}$ satisfying
the initial condition \eqref{ass_st_cor}.
 We will show that  \textbf{Range hypothesis} \eqref{radi_const} is valid for $t'=T$. First,    due to the initial assumption with \eqref{assm_r'_bar_r}, the hypothesis is true for some $t'>0$ since the flow speed is uniformly bounded.
For a contradiction, let's suppose that the hypothesis fails for $t'=T$.
Then, there exists  some moment $T_0\in(0,T)$ 
  such that 
\begin{equation}\label{contra_ass}
\mbox{ \textbf{Range hypothesis} holds for $t'=T_0$ while the hypothesis fails for every
  $t'>T_0$.}
\end{equation}
\item We note that since the hypothesis is true for  $t'=T_0$,  the estimates
\eqref{ass_st_pf2} and \eqref{est_finite} hold for $t'=T_0$.
		 For $x\in\mathbb{R}^2$, 
		we denote $\phi(t):=\phi(t,(0,x))$ from 
		\eqref{trajec_pertu} and 
		$\bar\phi(t):=\bar\phi(t,(0,x))$ from \eqref{defn_barpsi}. In the computations below, we consider $t\in[0,T_0)$.
		Similarly in \eqref{decomp_123}, we compute
		\begin{equation}\label{decomp_re}
\begin{split}
	\frac{d}{dt}\left( \phi(t)  - \bar\phi(t) \right)  = u(t,\phi(t) ) - \bar u(t,\bar\phi(t))=&u(t,\phi(t) )- \bar u(\Theta(t)/\Omega,\phi(t))\\
	 &\quad +
\bar u(\Theta(t)/\Omega,\phi(t))- \bar u(t,\phi(t))  \\
&\quad +   \bar u(t,\phi(t)) - \bar u(t,\bar\phi(t))\\
& =:I(t)+II(t)+III(t).
\end{split}
\end{equation}

 Then
the estimate \eqref{ass_st_pf2},  we have 
\begin{equation*}
	\begin{split}
	|I(t)|&\leq \hat C\varepsilon^{1/2}.
	\end{split}
\end{equation*} 
 For $II(t)$, 
as in \eqref{defn_til_the}, we first find $\tilde{\Theta}(t)$ by
\begin{equation*}
 \tilde\Theta(t)= \Theta(t)+\frac{2\pi}{m}\cdot k\quad\mbox{for some integer}\quad k \quad\mbox{and}\quad
\left(\tilde\Theta(t ) -  \Omega t \right) \in\mathbb{T}.
\end{equation*} Then, by using time-periodicity and (space-time) Lipschitz continuity of $\bar u$, we get
 \begin{equation*}
	\begin{split}
	|II(t)|&=|
\bar u(\tilde\Theta(t)/\Omega,\phi(t))- \bar u(t,\phi(t))|
\leq \frac{2\cdot C_{Lip}}{\Omega^*} |\tilde\Theta(t ) - \Omega t   | 
\\
&
=\frac{2\cdot C_{Lip}}{\Omega^*} |\mathcal{T}[\tilde\Theta(t ) - \Omega t]   | 
=\frac{2\cdot C_{Lip}}{\Omega^*} |\mathcal{T}[ \Theta(t ) - \Omega t]   | 
\\
&\leq  \frac{2\cdot C_{Lip}}{\Omega^*}\cdot C_4\varepsilon^{1/2},
	\end{split}
\end{equation*} 
where we used the estimate \eqref{est_finite} in the last inequality.
For $III(t)$, we get
$$
|III(t)|\leq C_{Lip}|\phi(t)-\bar\phi(t)|,
$$ which gives,  \begin{equation*}
\begin{split}
	\frac{d}{dt}|\phi(t) -\bar\phi(t)|    
	& \le C_{Lip}|\phi(t)-\bar\phi(t)| + C_5 \varep^{1/2}.
\end{split}
\end{equation*} where $C_5>0$ was already  defined in \eqref{defn_C5}.
With Gronwall's inequality and with the smallness assumption (on $\varepsilon$) \eqref{est_C5}, we get  \begin{equation*}
\begin{split}
	|\phi(t)-\bar\phi(t)|
	& \le   \left( e^{ C_{Lip}T}C_5\right)\cdot T\cdot \varep^{1/2}\leq\frac{\tau}{2}.  
\end{split}
\end{equation*} 
 Together with the fact \eqref{conseq_eta} and the definition \eqref{defn_tau} of $\tau$, the argument above implies  \begin{equation*}
	 \cup_{t\in[0,T_0)}\phi(t,(0,\bar A^{\mu}))\subset B_{(\bar m +\tau) +(\tau/2) }.
	 \end{equation*} which gives  
	 \begin{equation*}
	 \cup_{t\in[0,T_0]}\phi(t,(0,\bar A^{\mu}))\subset \overline{B_{(\bar m +\tau) +(\tau/2) }}.
	 \end{equation*} 
On the other hand, the definition \eqref{defn_r'} of $r'$ together with 
  \eqref{mu_cond1} and \eqref{mu_cond2} implies
$$
B_{r'}\subset B_{\bar{n}+\mu}\subset \bar{A}^\mu.
$$
   Thus, the assumption  $A_0\subset B_{r'}$ gives
$$
\cup_{t\in[0,T_0]}A(t)=\cup_{t\in[0,T_0]}\phi(t,(0,A_0))
\subset \cup_{t\in[0,T_0]}\phi(t,(0,\bar A^{\mu}))\subset \overline{B_{(\bar m +\tau) +(\tau/2) }}.
$$ 
On the other hand, we observe $$
{(\bar m +\tau) +(\tau/2) }<\bar r$$ thanks to \eqref{defn_tau}, \eqref{mu_cond2}.
By recalling that  the flow speed is bounded, there should exist some moment $T_1>T_0$ such that \textbf{Range hypothesis} is true for $t'=T_1$, which contradicts the assumption \eqref{contra_ass}.
Hence, \textbf{Range hypothesis} \eqref{radi_const} for $t'=T$ is valid.
As a result,   we obtain the   estimates  \eqref{est_finite},  \eqref{ass_st_pf2} for $t'=T$.
\item Lastly,     by   the estimate \eqref{est_v_st_der2} of Lemma \ref{lem:v_st} and by \eqref{ass_st_pf2}, \eqref{est_finite} for $t'=T$,  we get, for any $t\in[0,T)$,
  \begin{equation*}
\begin{split}
	\|\omega(t)-\bar \omega_{\Omega t}\|_{L^1}&\leq
 \|\omega(t)-\bar \omega_{\Theta(t)}\|_{L^1}+ \|\bar \omega_{\Theta(t)}-\bar \omega_{\Omega t}\|_{L^1}\\& 
\leq 
 \varepsilon+2|\bar A_{\Theta(t)}\setminus \bar A_{\Omega t}|
 \leq 
 \varepsilon+2c_5|\mathcal{T}[\Theta(t)-\Omega t]| \leq 
 \varepsilon+2c_5C_4\varepsilon^{1/2}.
\end{split}
\end{equation*}
By the smallness assumption \eqref{est_C5} (on $\varepsilon$),  we get
the  stability ($\eqref{ass_st_cor}^{\delta'}\Rightarrow\eqref{est_cor}^{\varepsilon'}$).
 It finishes the proof of Theorem \ref{cor:finite_time}. \qedhere

\end{enumerate}

\end{proof}

\subsection{Proof of Theorem \ref{thm:instability}}
Now we are ready to  prove \textit{perimeter growth theorem} (Theorem \ref{thm:instability}) by using \textit{finite time stability} (Theorem \ref{cor:finite_time}) and $(II)$ of Lemma \ref{lem:eta}.
\begin{proof}[Proof of Theorem \ref{thm:instability}]
\begin{enumerate}
\item We recall that the angular velocity $\hat u^\theta$ of $\hat{u} := K * \mathbf{1}_{B_1}$ is
 $$\hat u^\theta(r)=
 \begin{cases} \frac {1} 2,\quad  & r\leq 1,\\    \frac {1}{2r^2},\quad & r>1\end{cases}.
 $$
 \item 
We   borrow the constants $\beta_1>0, \,r'>1$ from  Theorem \ref{cor:finite_time} and set $\mu:=r'-1>0$. We also take the constant $\kappa>0$ from $(II)$ of Lemma \ref{lem:eta}.
We  set $r_1:=1$ and $$r_2:=1+
 \frac{\min\{\mu,\kappa\}}{2}.
 $$ We note $
r_2<r'$ and $r_2<1+\kappa.$ Then we  
 consider any constant $\tau\in(0,1/4]$ satisfying
$$\tau\leq \frac{\min\{\mu,\kappa\}}{20},$$ 
 which will be chosen again to be small during the proof. 
 \item We denote the intervals
 $$
 I_i=[r_i-2\tau,r_i+2\tau],\quad  I'_i=[r_i-\tau,r_i+\tau]
 $$  for $i=1,2$. 
  We set 
 $$U^1:=\left(\inf_{r\in I_1}\hat u^\theta(r)\right)>0\quad\mbox{and}\quad
 U^2:=\left(\sup_{r\in I_2}\hat u^\theta(r)\right)>0.$$
Since $
\hat u^\theta(r_1)> \hat u^\theta(r_2)
$ and $\hat u^\theta$ is continuous, we can
assume  $U^1-U^2>0$ by making $\tau>0$ smaller than before (if necessary).
By fixing such a constant $\tau>0$, we take $\beta'=\beta'(\tau)>0$ from $(II)$ of Lemma \ref{lem:eta}.
  We also denote 
 $$\bar U^1:= U^1-\frac {U^1-U^2} 4 \quad\mbox{and}\quad
  \bar U^2:= U^1+\frac {U^1-U^2} 4,$$ and note that $\Delta \bar U:=\bar U^1-\bar U^2>0.$
 
\item  Let $M>0$ and $\delta>0$. We take any large $T>0$ such that 
 \begin{equation}\label{defn_T}
\left(T \Delta \bar U -2\pi\right) > 2M. 
 \end{equation}
  Let $\varepsilon>0$ be small enough 
to satisfy
\begin{equation}\label{inf_small2}\hat C (\varepsilon)^{1/2}\leq \frac{U^1-U^2}{8}, \end{equation} where $\hat C>0$ comes from \eqref{defn_hat_C}, and
 \begin{equation}\label{small_ep'}
	\left( e^{ C_{Lip}T} \right)\cdot T\cdot \hat  C(\varepsilon)^{1/2}\leq\tau.
\end{equation}


\item 
  From now on, we consider a sufficiently small
   $\beta\in(0,\min\{\beta_1,\beta'\}]$
satisfying the following:
 \begin{enumerate}
 \item The perimeter 
 of $\partial\bar A$ is smaller than $10$.
  \item $\partial \bar A\cap \{r=1\}\neq \emptyset$
 \item The velocity $\bar u=K*\mathbf{1}_{\bar A}$ for the Kelvin wave with parameter $\beta>0$ is close enough to  the velocity $\hat u$ for the circular patch  in the sense that
\begin{equation}\label{inf_small1}
\|\bar u-\hat u\|_{L^\infty}\leq \frac{U^1-U^2}{8}.
\end{equation} 
 \end{enumerate}

   We may assume   $\delta>0$ small enough to satisfy  $\delta\leq\delta(m,\beta,\varepsilon,T)$, where $\delta(m,\beta,\varepsilon,T)>0$ is the constant  from Theorem \ref{cor:finite_time}. 
 
 \item  We take any initial data $\mathbf{1}_{A_0}$ with the following properties:
  \begin{enumerate}
 \item $A_0$ is an open $m$-fold symmetric set with $C^\infty$-smooth connected boundary $\partial A_0$.
 \item The perimeter 
 of $\partial A_0$ is smaller than $20$.
 \item 
$A_0\subset B_{r'}$ 
 and $
\|\omega_0-\bar\omega\|_{L^1(\mathbb{R}^2)}\leq\delta.
$
\item  For each $i=1,2$, $\exists$ a point $x^i=(r_i\cos\tht_i,r_i\sin\tht_i) \in \partial  A_0$ satisfying $|\tht_i|\le \frac{2\pi}{m}$. 
 \end{enumerate}
  Then, Theorem \ref{cor:finite_time} implies that 
 the perturbed solution $\omega(t)=1_{A_t}$ satisfies 
 \begin{equation}\label{ass_st_pf3}
   \sup_{t\in[0,T]}\|\omega(t)-\bar\omega_{\Omega t}\|_{L^1(\mathbb{R}^2)}\leq\varepsilon'.
 \end{equation}
 Set $L_0:[0,1]\to\partial A_0$ be an injective parametrized curve lying on the sector $\{(r,\theta)\,:\,\theta\in\mathbb{T}\}$ satisfying $L_0(0)=x^1, L_0(1)=x^2$, and consider
 $L_t:=\phi(t,(0,L_0)).$ We will show that the length of the curve $L_T$ is larger than $M$, which finishes the proof thanks to $L_T\subset \partial A_T$.
 \item For $i=1,2$, we denote 
$$\phi^i(t):=\phi(t,(0,x^i))\quad\mbox{and}\quad \bar\phi^i(t):=\bar\phi(t,(0,x^i)),$$
where $\phi, \bar\phi$ are the trajectories from the perturbed solution $\omega(t)$ and the Kevin wave solution $\bar\omega(t)=\mathbf{1}_{\bar A_{\Omega t}}$ as in 
\eqref{trajec_pertu}, \eqref{defn_barpsi}, respectively.
 For any interval $I=[a,b]\subset \mathbb{R}_{>0}$, we denote the annulus 
 $$
 R_{I}:=\overline{B_{b}}\setminus B_{a}.
 $$ 
We observe
\begin{equation}\label{est_bar_per}
 \bar\phi^i(t)\in R_{  I'_i},\quad\forall t\geq 0,\quad i=1,2. 
\end{equation}
   by using $(II)$ of  Lemma \ref{lem:eta}.
 \item  We claim
 \begin{equation}\label{claim_i_a}
  \phi^i(t)\in R_{I_i},\quad\forall t\in[0,T]
 \end{equation}
for $i=1,2$. Thanks to \eqref{est_bar_per},  it is enough to show that
$$|\phi^i(t)-\bar\phi^i(t)|\leq \tau,\quad \forall t\in[0,T].$$
 	Similarly in \eqref{decomp_123}, \eqref{decomp_re}, we compute
		\begin{equation*}
\begin{split}
	\frac{d}{dt}\left( \phi(t)  - \bar\phi(t) \right)  = u(t,\phi(t) ) - \bar u(t,\bar\phi(t))=&u(t,\phi(t) )- \bar u(t,\phi(t))\\
	 &\quad +
\bar u(t,\phi(t))- \bar u(t,\bar \phi(t))  \\
& =:I(t)+II(t).
\end{split}
\end{equation*}

From the stability   \eqref{ass_st_pf3},  we have 
\begin{equation*}
	\begin{split}
	|I(t)|&\leq \hat C(\varepsilon')^{1/2}.
	\end{split}
\end{equation*}

For $II(t)$, we 
note Lipschitz continuity of $\bar u$  to get
\begin{equation*}
	\begin{split}
	|II(t)|&
\leq  {C_{Lip}} |\phi(t)-\bar\phi(t)  |. 
	\end{split}
\end{equation*}  Thus 
  we  get, for $t\in[0,T]$, \begin{equation*}
\begin{split}
	\frac{d}{dt}|\phi(t) -\bar\phi(t)|    
	& \le C_{Lip}|\phi(t)-\bar\phi(t)| + \hat C(\varepsilon')^{1/2}.
\end{split}
\end{equation*}  
With Gronwall's inequality, we get  \begin{equation*}
\begin{split}
	|\phi(t)-\bar\phi(t)|
	& \le   \left( e^{ C_{Lip}T} \right)\cdot T\cdot \hat C(\varepsilon')^{1/2}.
\end{split}
\end{equation*} 
From the smallness  assumption \eqref{small_ep'} on $\varepsilon'>0$,
  we obtain the claim \eqref{claim_i_a}.
 
\item
Lastly, we observe 
\begin{equation*}\begin{split}
|u(t, \phi^i(t))-\hat u(\phi^i(t))|\leq |u(t, \phi^i(t))-\bar u(t,\phi^i(t))|+|\bar u(t, \phi^i(t))-\hat u(\phi^i(t))|\leq \frac{U^1-U^2}{4},
\end{split}\end{equation*} where the last inequality follows from \eqref{ass_st_pf3}, \eqref{inf_small2}, \eqref{inf_small1}.
Thus  the above claim \eqref{claim_i_a} implies
  that the angular velocity of $u(t,\phi^1(t))$ is bigger than $\bar U^1$ while that   of $u(t,\phi^2(t))$ is smaller than $\bar U^2$. Thus we simply  observe that the difference between the
 winding number (with respect to the origin) of trajectory $\phi^1(t)$ on $[0,T]$ starting at $x^1$ and the  winding number of trajectory $\phi^2(t)$ starting at $x^2$ is bigger than  
 $$\frac{\left( T\Delta \bar U-2\pi\right)}{2\pi}.$$
 Since  $$\phi^i(t)\in R_{I_i}\subset \mathbb{R}^2\setminus B_{1/2},\quad i=1,2
 $$  on $[0,T]$, our choice  \eqref{defn_T} of $T$ implies that the length of $L(T)
 $ should be larger than 
$  M. $ \end{enumerate} 
The proof is complete. \end{proof}

\subsection*{Acknowledgment}

\noindent  {We would like to thank Junho Choi for allowing the use of Figure \ref{fig:fila}.} KC has been supported by the National Research Foundation of Korea (NRF-2018R1D1A1B07043065). IJ has been supported by the Samsung Science and Technology Foundation under Project Number SSTF-BA2002-04 and the New Faculty Startup Fund from Seoul National University. 

\appendix 

\section{Stability of the Annulus}

In this section, we provide a sketch of the fact that any annulus is a strict local maximum of the energy within a suitable admissible class of patches. Based on this fact, one can derive nonlinear stability and instability results as in the Kelvin wave case. We believe that this is interesting at least for the following reasons: \begin{itemize}
	\item While it is known that monotone decreasing and radial vorticities define nonlinear stable steady states {(\textit{e.g.} see \cite{CL_radial}),} this seems to be a fist instance where nonlinear stability for \textit{non-monotone} radial solution can be obtained. Moreover, long time filamentation can be proved near any annulus using the nonlinear stability. 
	\item It is likely that under certain mass, impulse, and $m$-fold symmetry constraint, there exist at least two strict local maximum of the energy, one given by an $m$-fold symmetric Kelvin wave and the other being an annulus, especially when both of them are sufficiently close to the disc. (Strictly speaking, we do not know the precise range of existence/stability in $\bt$ for Kelvin waves with large $m$.)
\end{itemize}

\subsection{Admissible class and key proposition for the annulus}

For $0< r_1 < r_2$, we consider the annulus  \begin{equation*}
	\begin{split}
		\bar{\omg}_{r_1,r_2} := \mathbf{1}_{[r_1,r_2]}(r). 
	\end{split}
\end{equation*} We shall often omit writing out the subscripts $r_1$ and $r_2$, and define the admissible class of perturbations \begin{equation*}
	\begin{split}
		\calA[\bar{\omg}] := \left\{ \tilde{\omg} = \mathbf{1}_{A} :   \int \tilde{\omg} = \int \bar{\omg},  \int |x|^2 \tilde{\omg} = \int |x|^2 \bar{\omg} \right\}
	\end{split}
\end{equation*} and set \begin{equation*}
	\begin{split}
		\calA^{m} = \calA[\bar{\omg}] \cap \left\{ \tilde{\omg} \mbox{ is } m\mbox{-fold symmetric
		} \right\}.
	\end{split}
\end{equation*} It is interesting to note that, imposing the mass and impulse constraint simultaneously picks out (at most) one annulus. Next, we set \begin{equation*}
	\begin{split}
		\calN_{\varepsilon,D}[\bar{\omg}] := \left\{  \tilde{\omg} = \mathbf{1}_{A} : A\subset D, \nrm{\tilde{\omg}-\bar\omg}_{L^1} < \varepsilon \right\}.
	\end{split}
\end{equation*}
Let us now state our key proposition. 
\begin{proposition}\label{prop:ann}
	For any $0<r_1<r_2$, there exist $m\ge2$, $\bar{r}>1$, $\varepsilon_0 > 0$, and $c_0>0$ such that \begin{equation*}
		\begin{split}
			E[\bar\omg_{r_1,r_2}] - E[\omg] \ge c_0 \nrm{\bar\omg_{r_1,r_2} - \omg}_{L^1}^2
		\end{split}
	\end{equation*} for any $\omg \in \calA^{m} \cap \calN_{\varepsilon,B_{\bar{r}r_2}}[\bar{\omg}_{r_1,r_2}]$ with $0<\varepsilon<\varepsilon_0$. 
\end{proposition}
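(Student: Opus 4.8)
The plan is to follow the same three-step scheme used for Proposition~\ref{prop:key}: (i) reduce an arbitrary competitor to a graph-type patch, (ii) compare the energies of a competitor and its reduction, and (iii) carry out a spectral analysis of the second variation on graph perturbations. The one genuinely new feature is that the boundary of the annulus has \emph{two} components $\{r=r_1\}$ and $\{r=r_2\}$, so every object that was scalar in the Kelvin case becomes a pair. First I would fix the relative stream function of $\bar{\omg}=\bar{\omg}_{r_1,r_2}$: writing $\bar\psi = G\bar{\omg} + \frac12\Omega r^2 + C$ as in \eqref{eq:psi-mbt}, the two free constants are uniquely pinned down by $\bar\psi(r_1)=\bar\psi(r_2)=0$, which gives
\begin{equation*}
  \Omega = \frac12 - \frac{r_1^2\ln(r_2/r_1)}{r_2^2-r_1^2} \in \left(0,\tfrac12\right).
\end{equation*}
Since $\bar\psi$ is radial and strictly concave on $(r_1,r_2)$ (because $\Omega<\frac12$), it is strictly positive inside the annulus; because $\Omega>0$ one computes $\bar\psi=\frac12\Omega(r^2-r_1^2)<0$ throughout the hole $\{r<r_1\}$, and $\partial_r\bar\psi(r_2)<0$ forces $\bar\psi<0$ on $(r_2,r^*)$ for some $r^*>r_2$. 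Fixing $1<\bar r<r^*/r_2$ then reproduces exactly the sign picture exploited in \S\ref{subsec:redu}.

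With this in hand the reduction step mirrors Lemma~\ref{lem:reduction-ImFT}, and is in fact simpler: the base state is radially symmetric, so there is no rotational degeneracy and hence no $\sin(m\tht)$-moment to correct. Given $\omg\in\calA^m\cap\calN_{\varepsilon,B_{\bar r r_2}}$, I would set $\psi_\mu = G\omg + \frac12\mu_1 r^2 + \mu_2$ and solve $F(\mu)=0$ for the two constraints $\int r^2(\omg_{\psi,\mu}-\bar{\omg})=\int(\omg_{\psi,\mu}-\bar{\omg})=0$ by the inverse function theorem, where $\omg_{\psi,\mu}$ is the characteristic function of the bounded (annular) component of $\{\psi_\mu\ge0\}$; the nondegeneracy $\det\nabla F\neq0$ is the analogue of the Kelvin computation and holds precisely because $r_1\neq r_2$. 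Calling the output $\tilde\omg$, the comparison inequality $E[\tilde\omg]-E[\omg]\ge C\nrm{\tilde\omg-\omg}_{L^1}^2$ follows from the argument of Lemma~\ref{lem:energy-compare1}: one writes $E[\tilde\omg]-E[\omg]\ge\brk{\tilde\omg-\omg,\tilde\psi}=\int_{\tilde A\setminus A}\tilde\psi-\int_{A\setminus\tilde A}\tilde\psi$ and uses $\tilde\psi\ge0$ on $\tilde A\setminus A\subset\{r_1<r<r_2\}$ and $\tilde\psi\le0$ on $A\setminus\tilde A$. The second sign is exactly where $\Omega>0$ is essential: the removed mass can sit either beyond $r_2$ (controlled by the support condition $A\subset B_{\bar r r_2}$) or inside the hole, and $\bar\psi<0$ on the \emph{whole} of $\{r<r_1\}$ holds only because $\Omega>0$.

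The heart of the matter is step (iii). First I would extend the expansion of Lemma~\ref{lem:energy-expansion} to a two-component boundary: parametrizing the perturbation by graphs $h_1,h_2$ over the two circles and setting $q_i=J_0^{(i)}h_i=r_i h_i$, the quadratic form becomes $\frac12\brk{q,\calL q}$ with $\calL$ a $2\times2$ matrix of operators, diagonal blocks $I_0^{(i)}\mathrm{Id}+K_{ii}$ (each $K_{ii}$ being the disc kernel $K^*$ up to a constant) and off-diagonal blocks $K_{12},K_{21}$ from the circle-circle interaction. Each Fourier mode $e^{in\eta}$ decouples $\calL$ into a $2\times2$ matrix $M_n$ with diagonal entries $I_0^{(i)}+\frac{1}{2|n|}$, where $I_0^{(1)}=-\partial_r\bar\psi(r_1)/r_1<0$ and $I_0^{(2)}=\partial_r\bar\psi(r_2)/r_2<0$ are strictly negative and \emph{independent of $m$}, and off-diagonal entries of size $\frac{1}{2|n|}(r_1/r_2)^{|n|}$. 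The cancellation conditions come as in Proposition~\ref{prop:key-graph}: mass and impulse annihilate the $n=0$ block of both boundaries (to leading order they read $\bar q_1=\bar q_2$ and $r_1^2\bar q_1=r_2^2\bar q_2$, forcing $\bar q_1=\bar q_2=0$ since $r_1\neq r_2$), and Lemma~\ref{lem:m-symm} applied to the $m$-fold symmetric $q_1,q_2$ kills every mode with $0<|n|<m$, so only $|n|\ge m$ survive. Because $I_0^{(1)},I_0^{(2)}<0$ are fixed, choosing $m$ with $\frac{1}{2m}<\min_i|I_0^{(i)}|$ makes both diagonal entries of $M_n$ negative for all $|n|\ge m$, while $\det M_n=(I_0^{(1)}+\tfrac{1}{2|n|})(I_0^{(2)}+\tfrac{1}{2|n|})-O(|n|^{-2}(r_1/r_2)^{2|n|})>0$ as the coupling is exponentially small; thus $M_n\le -c\,\mathrm{Id}$ uniformly and $\brk{\calL q,q}\le -c\nrm{q}_{L^2}^2$. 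Relating $\nrm{q}_{L^2}$ to $\nrm{\omg_h-\bar{\omg}}_{L^1}$ as at the end of Proposition~\ref{prop:key-graph} gives the claimed bound for graphs, and combining it with the reduction closes the proof. The main obstacle I anticipate is entirely in this step: getting the orientations and signs of the two self-terms $I_0^{(i)}$ right, and controlling the off-diagonal coupling $K_{12}$ uniformly so that $\det M_n$ stays positive down to the threshold mode $|n|=m$; everything else is a faithful transcription of the Kelvin argument, with the requirement of \emph{large} $m$ arising precisely from the condition $\frac{1}{2m}<\min_i|I_0^{(i)}|$.
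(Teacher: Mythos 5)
Your proposal is correct and follows essentially the same route as the paper's appendix: the same relative stream function pinned by two constants with the key signs $\partial_r\bar\psi(r_1)>0>\partial_r\bar\psi(r_2)$, the same inverse-function-theorem reduction (in the two constants $\mu_1,\mu_2$, with no rotational moment to correct) to a pair of boundary graphs, and the same spectral conclusion in which the fixed negative diagonal terms dominate the $O(1/m)$ self-interaction and the exponentially small circle-circle coupling once $m$ is large. Your per-mode $2\times 2$ matrix $M_n$ is just a repackaging of the paper's split $E[\tilde\omg]-E[\bar\omg]=I+II$, where $I$ produces your diagonal terms $I_0^{(i)}$ and $II$ is bounded by $o_m(1)\nrm{h}^2$ exactly by your eigenvalue estimates.
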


All of the constants $m, \bar{r}, \varepsilon_0$, and $c_0$ depend on $r_1$ and $r_2$ in a rather complicated way. The rest of this section is devoted to the proof of the above proposition. We omit the details as the arguments are parallel to the case of the Kelvin waves. 

\subsection{Relative stream function}

We now modify the stream function of $\bar{\omg}$ in a way that it vanishes on the boundary of the annulus. We recall that for any radial vorticity $\bar{\omg}$, $\bar{G} := G[\bar\omg] = \frac{1}{2\pi} \ln \frac{1}{|x|} * \bar{\omg}$ is given by \begin{equation*}
	\begin{split}
		-\rd_r \bar{G} = \frac{1}{r} \int_0^r s \bar{\omg}(s) ds. 
	\end{split}
\end{equation*} Indeed, using the above formula it is immediate to see that $\lap \bar{G} = (\rd_{rr}+ \frac{\rd_r}{r} )\bar{G} = - \bar{\omg}$. We see that \begin{equation*}
	\begin{split}
		-\rd_r \bar{G} (r) = \begin{cases}
			0 & r\le r_1, \\
			r/2 - r_1^2/(2r) & r_1 < r \le r_2, \\
			(r_2^2-r_1^2)/(2r) & r_2 < r. 
		\end{cases}
	\end{split}
\end{equation*} We have that $\bar{G}(0) = \int_{r_1}^{r_2} r\ln \frac{1}{r} dr$ and $\bar{G}(r)$ is monotone decreasing in $r$. We claim that there exists a unique pair of constants $C_0,C_1$ such that the \textit{relative stream function} defined by \begin{equation}\label{eq:rel-stream-ann0}
	\begin{split}
		\bar{\psi} = \bar{G} + C_0 + C_1 r^2 
	\end{split}
\end{equation} satisfies \begin{equation*}
	\begin{split}
		\bar{\psi}(r_1) = \bar{\psi}(r_2) = 0. 
	\end{split}
\end{equation*} 
The unique choice is given by \begin{equation*}\label{eq:rel-stream-ann}
	\begin{split}
		\bar{\psi} = \bar{G} - \bar{G}(0) - C_1r_1^2 + C_1 r^2 , \qquad C_1 := \frac{1}{4} - \frac{r_1^2}{2(r_2^2-r_1^2)}\ln \frac{r_2}{r_1} > 0.
	\end{split}
\end{equation*} We note that \begin{equation*}
	\begin{split}
		\rd_r \bar{\psi}(r_1) = 2C_1r_1 >0, \qquad \rd_r\bar{\psi}(r_2) = 2C_1r_2 + \bar{G}'(r_2) = - \frac{r_1^2r_2}{r_2^2-r_1^2} \ln \frac{r_2}{r_1} + \frac{r_1^2}{2r_2}<0 
	\end{split}
\end{equation*} for any $0<r_1<r_2$. See Figure \ref{fig:annstream} for a plot of $\bar{G}$ and $\bar{\psi}$ in the case $r_1=1/2$ and $r_2=1$. Note that there is a critical radius $r^* > r_2$ (depending on $r_1$ and $r_2$) such that $\bar{\psi}(r^*)=0$ and $\bar\psi (r) < 0$ for $r_2<r<r^*$. This determines $\bar{r}$ in Proposition \ref{prop:ann}; we need to take $1< \bar{r} < r^*/r_2$.

\begin{figure}
	\centering
	\includegraphics[scale=0.5]{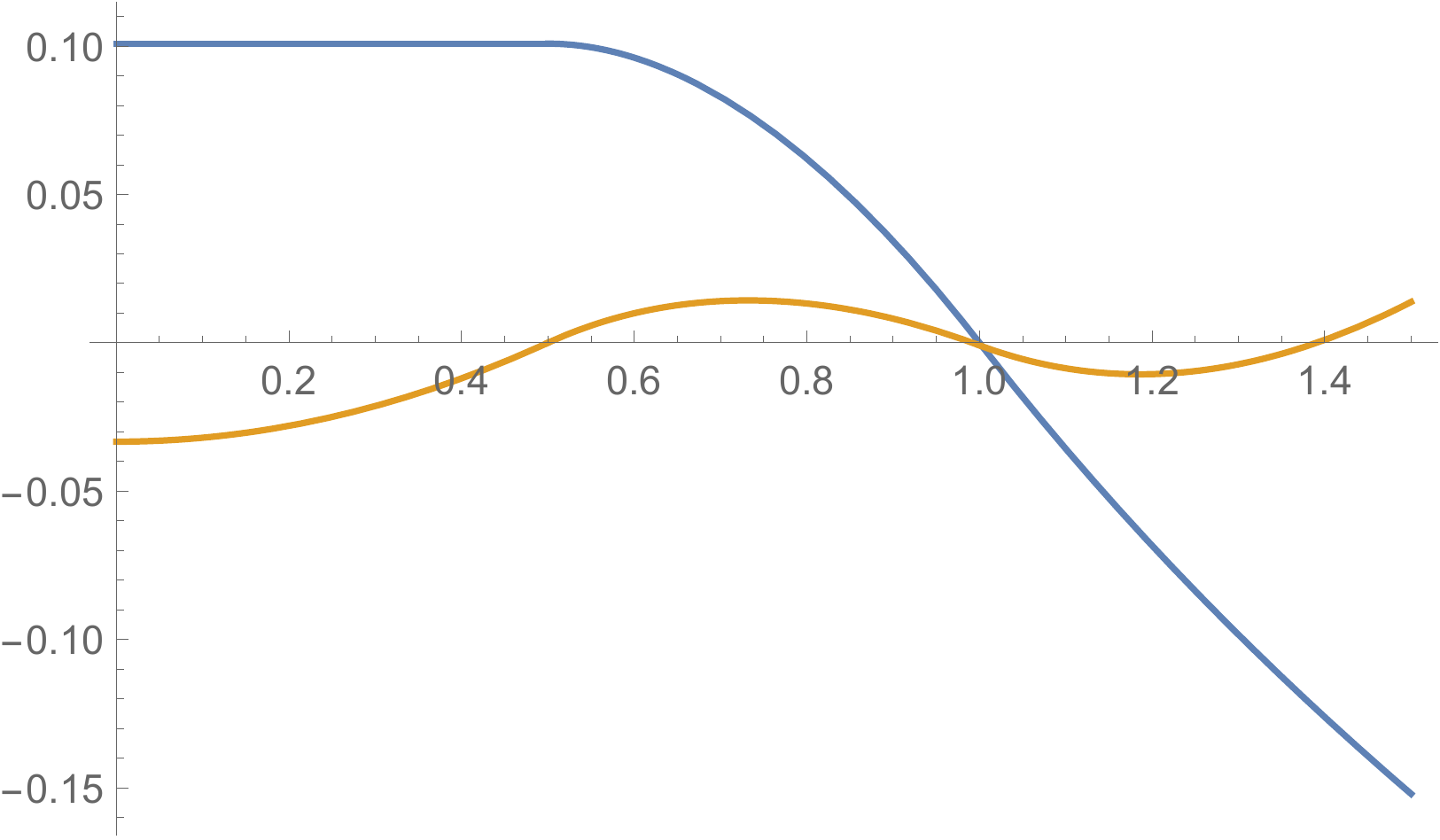}  
	\caption{Stream functions $\bar{G}$ and $\bar\psi$ for the annulus} \label{fig:annstream}
\end{figure}

\subsection{Graph type perturbation}

We fix some $\bar{\omg}=\bar\omg_{r_1,r_2}$ and  consider graph type perturbations, which are described by a pair of functions defined on $S^1$; given $(h_1(\tht), h_2(\tht))$ which are assumed to be sufficiently small (depending on $r_1$ and $r_2-r_1$) in the $C^1$ norm, we set \begin{equation*}
	\begin{split}
		\tilde{\omg}_{h_1,h_2} := \mathbf{1}_{\tilde{A}_{h_1,h_2}}, \qquad \tilde{A}_{h_1, h_2} = \{  (r,\tht):   r_1 + h_1(\tht) < r < r_2 + h_2(\tht)  \}. 
	\end{split}
\end{equation*} 
We easily compute that, with notation $\nrm{h_i}^2 := \int_{S^1} |h_i|^2 d\tht$,  \begin{itemize}
	\item Mass: \begin{equation*}
		\begin{split}
			\int \tilde{\omg} dx  = \int \bar{\omg} dx -r_1 \int h_1 d\tht + r_2 \int h_2 d\tht + \frac12 \int h_2^2 d\tht - \frac12 \int h_1^2 d\tht . 
		\end{split}
	\end{equation*}  
	\item Impulse: \begin{equation*}
		\begin{split}
			\int |x|^{2} \tilde{\omg} dx   = \int |x|^{2} \bar{\omg} dx  + r_2^{3} \int h_2 d\tht + \frac32 r_2^{2} \int h_2^2  - r_1^{3} \int h_1 d\tht -  \frac32 r_1^{2} \int h_1^2 + O( \nrm{h_1}^3 + \nrm{h_2}^3 ).  
		\end{split}
	\end{equation*} 
\end{itemize}
Based on these computations, we immediately see that the requirement $\tilde{\omg} \in \calA[\bar\omg]$ forces that  \begin{equation}\label{eq:small-mass}
	\begin{split}
		\int h_2 d\tht , \quad  \int h_1 d\tht = O(\nrm{h}^2),
	\end{split}
\end{equation} where $\nrm{h}^2:=\nrm{h_1}^2+\nrm{h_2}^2$. This \textit{small mass condition} will be used frequently in the following. 

\subsection{Reduction to graph type perturbation} Given a fixed annulus $\bar\omg$ and a (general) patch perturbation $\omg^*$ belonging to the admissible class $\calA^{m} \cap \calN_{\varepsilon,B_{\bar{r}r_2}}$, we need to find a graph type perturbation $\tilde\omg$ which satisfies \begin{equation*}
	\begin{split}
		E[\omg^*] - E[\tilde\omg] \le 0, \qquad E[\tilde\omg] - E[\bar\omg] \le 0 
	\end{split}
\end{equation*} and still belonging to the admissible class. The proof of the second inequality is the goal of the next section. For the first inequality, having $\omg^*$ close to $\bar\omg$ {in $L^1$} implies that the function $G[\omg^*] + C_0 + C_1r^2$ is close to $\bar\psi$ in the $C^{1,\alp}$ topology with any $0<\alp<1$, where $C_0$ and $C_1$ are the constants from \eqref{eq:rel-stream-ann0}. Then, there is a unique way to slightly perturb the constants $C_0$ and $C_1$ to $C_0'$ and $C_1'$ respectively, so that if we define $\tilde{\omg}$ to be the patch supported on the {inner} 
component of the set $ \{ \tilde{G} > 0 \}$ where \begin{equation*}
	\begin{split}
		\tilde{G} := G[\omg^*] + C_0' + C_1'r^2,
	\end{split}
\end{equation*} then $\tilde{\omg}$ belongs to the admissible class. This can be proved using a determinant computation arising from matching the mass and impulse simultaneously. Then, proving $E[\omg^*] - E[\tilde\omg] \le 0$ is straightforward, using that $\rd_r\bar\psi (r_1)>0 > \rd_r\bar\psi(r_2)$.

\subsection{Energy difference for graph type perturbation} Finally, we may assume that $\tilde{\omg}$ is a graph type perturbation and write 
\begin{equation*}
	\begin{split}
		{E}[\tilde{\omg}] - E[\bar\omg] = \brk{\tilde{\omg} - \bar\omg, \bar{G} } + \frac12 \brk{\tilde{\omg} - \bar\omg, G(\tilde{\omg} - \bar\omg)} = I+II. 
	\end{split}
\end{equation*}

\medskip

\noindent \underline{Computation for I}: Using that $\tilde{\omg} \in \calA$, we may write \begin{equation*}
	\begin{split}
		I = \brk{\tilde{\omg} - \bar\omg, \bar{\psi} } =  \int \int_{r_2}^{r_2 + h_2} \bar\psi(r)  rdrd\tht  - \int \int_{r_1}^{r_1 + h_1} \bar\psi (r)  rdrd\tht  =: I_2 + I_1.
	\end{split}
\end{equation*}
Then, we compute using that $\bar\psi (r_2) = 0$ 
\begin{equation*}
	\begin{split}
		I_{2} &= \int \int_{r_2}^{r_2 + h_2}( \rd_r\bar\psi(r_2) (r-r_2) + o(r-r_2) ) (r_2 + r-r_2)drd\tht \\
		& =  r_2  \rd_r\bar\psi(r_2) \int \int_{r_2}^{r_2 + h_2}   (r-r_2)  drd\tht  +  o(\nrm{h_2}^2)  = \frac{r_2  \rd_r\bar\psi(r_2) }{2}  \int h_2^2 d\tht + o(\nrm{h_2}^2) .
	\end{split}
\end{equation*} Similarly, we have that \begin{equation*}
	\begin{split}
		I_1 =  -\frac{r_1  \rd_r\bar\psi(r_1) }{2}  \int h_1^2 d\tht + o(\nrm{h_1}^2) .
	\end{split}
\end{equation*} Note the negative sign in the first term of the right hand side. Therefore, we have that \begin{equation*}
	\begin{split}
		I \le -2c_0 \nrm{h}^2 + o(\nrm{h}^2)
	\end{split}
\end{equation*} for some $c_0>0$ depending only on $r_1$ and $r_2$.

\medskip

\noindent \underline{Computation for II}: Next, we consider the quadratic expression $II$ in polar coordinates after writing \begin{equation*}
	\begin{split}
		\tilde\omg - \bar\omg = (\tilde\omg_{h_2} - \bar\omg_{r_2}) - (\tilde\omg_{h_1} - \bar\omg_{r_1}), \qquad G_{j} := G(\tilde\omg_{h_j} - \bar\omg_{r_j});
	\end{split}
\end{equation*} \begin{equation*}
	\begin{split}
		II & = \frac12 \iint  (\tilde\omg_{h_2} - \bar\omg_{r_2}) (r,\tht) G_2 (r,\tht)  rdrd\tht  - \frac12 \iint  (\tilde\omg_{h_1} - \bar\omg_{r_1}) (r,\tht) G_2 (r,\tht)  rdrd\tht \\
		&\qquad - \frac12 \iint  (\tilde\omg_{h_2} - \bar\omg_{r_2}) (r,\tht) G_1 (r,\tht)  rdrd\tht + \frac12 \iint  (\tilde\omg_{h_1} - \bar\omg_{r_1}) (r,\tht) G_1 (r,\tht)  rdrd\tht \\
		&  =: II_{22} + II_{21} + II_{12} + II_{11}.
	\end{split}
\end{equation*} 
{As in \cite{Tang},}
 we see that \begin{equation*}
	\begin{split}
		II_{jj} = (r_j)^2 \brk{h_j, Kh_j} + o(\nrm{h_j}^2) 
	\end{split}
\end{equation*} for $j = 1, 2$ and \begin{equation*}
	\begin{split}
		II_{12}=II_{21} = - r_1r_2 \brk{ h_1, \tilde{K} h_2 } + o(\nrm{h}^2). 
	\end{split}
\end{equation*} Here, $\tilde{K}$ is the convolution operator defined on $S^1$ by \begin{equation*}
	\begin{split}
		(\tilde{K} h_2)(\tht) = \frac{1}{2\pi}  \int \ln \frac{1}{|r_1 e^{i\tht} - r_2 e^{i\tht'} |} h_2(\tht') d\tht' . 
	\end{split}
\end{equation*}
Therefore, we have that \begin{equation*}
	\begin{split}
		II = r_2^2 \brk{ h_2, K h_2 } + r_1^2 \brk{ h_1, K h_1} - 2r_1r_2 \brk{ h_1, \tilde{K} h_2} + o(\nrm{h}^2). 
	\end{split}
\end{equation*} Under the small mass condition \eqref{eq:small-mass}, we may replace $\tilde{K} h_2$ with the convolution   \begin{equation*}
	\begin{split}
		\frac{1}{2\pi}  \int \ln \frac{1}{|r_1  e^{i(\tht-\tht')} /r_2- 1|} h_2(\tht') d\tht' ,
	\end{split}
\end{equation*} whose eigenfunctions are simply $e^{in\tht}$ for $n\in\bbZ$. The eigenvalues of this operator depend on $r_1$ and $r_2$ but decays to 0 as $|n|\to\infty$, just like those for $K$. Therefore, we deduce that for $\tilde{\omg} \in \calA_m[\bar\omg]$, \begin{equation*}
	\begin{split}
		\left| II \right| \le o_m(1)\nrm{h}^2. 
	\end{split}
\end{equation*}

\medskip

\noindent \underline{Conclusion.} We have that \begin{equation*}
	\begin{split}
		I + II \le -c_0\nrm{h}^2 ,
	\end{split}
\end{equation*} for $m$ sufficiently large and $\nrm{h}$ sufficiently small. This concludes the proof of Proposition \ref{prop:ann}. \qedsymbol

\bibliography{bib_Kelvin-final_0518}
\bibliographystyle{plain}

\end{document}